\newtheorem*{Theoquantized}{Theorem 1}
\newtheorem*{Theopolyreg}{Theorem 2}
\newtheorem*{TheopolyA}{Theorem 3}
\newtheorem{theorem}{Theorem}[section]
\newtheorem{prop}[theorem]{Proposition}
\newtheorem{lem}[theorem]{Lemma}
\newtheorem{corol}[theorem]{Corollary}
\newtheorem{conj}[theorem]{Conjecture}
\theoremstyle{definition}
\newtheorem{defi}[theorem]{Definition}
\newtheorem{rmq}[theorem]{Remark}
\newtheorem{exmp}[theorem]{Example}
\def\u{\underline}
\def\Trop{{\rm{Trop}\,}}
\def\op{{\rm{op}\,}}
\def\coker{{\rm{coker}\,}}
\def\Gr{{\rm{Gr}}}
\def\dim{{\rm{dim}\,}}
\def\ddim{{\textbf{dim}\,}}
\def\rep{{\rm{rep}}}
\def\<{\left<}
\def\>{\right>}
\def\ens#1{\left\{ #1 \right\}}
\def\fl{{\longrightarrow}\,}
\def\A{{\mathbb{A}}}
\def\C{{\mathbb{C}}}
\def\N{{\mathbb{N}}}
\def\P{{\mathbb{P}}}
\def\Q{{\mathbb{Q}}}
\def\Z{{\mathbb{Z}}}
\def\CC{{\mathcal{C}}}
\def\ens#1{\left\{ #1 \right\}}
\def\Ext{{\rm{Ext}}}
\def\Hom{{\rm{Hom}}}
\def\Ob{{\rm{Ob}}}
\def\Alin#1{\overrightarrow{\mathbb A}_{#1}}
\def\ker{{\rm{ker}}\,}
\def\coker{{\rm{coker}}\,}
\def\XQy{X^{Q,\textbf y}}
\def\XAy{X^{A,\textbf y}}
\def\Aaffine{\tilde{\mathbb A}}
\title[Quantized Chebyshev polynomials]{Quantized Chebyshev polynomials and cluster characters with coefficients}
\author{G. Dupont}
\begin{document}

\begin{abstract}
	We introduce quantized Chebyshev polynomials as deformations of generalized Chebyshev polynomials previously introduced by the author in the context of acyclic coefficient-free cluster algebras. We prove that these quantized polynomials arise in cluster algebras with principal coefficients associated to acyclic quivers of infinite representation types and equioriented Dynkin quivers of type $\mathbb A$. We also study their interactions with bases and especially canonically positive bases in affine cluster algebras.
\end{abstract}

\maketitle

\setcounter{tocdepth}{1}
\tableofcontents

\begin{section}{Introduction}
	Normalized Chebyshev polynomials are elementary well known objects which can be defined as follows. For every $n \geq 1$, the $n$-th normalized Chebyshev polynomial of the first kind $F_n$ is characterized by $F_n(t+t^{-1})=t^n+t^{-n}$ and the $n$-th normalized Chebyshev polynomial of the second kind $S_n$ is characterized by $S_n(t+t^{-1})=\sum_{k=0}^n t^{n-2k}$. These polynomials made their first apparition in the context of cluster algebras respectively in \cite{shermanz} and in \cite{CZ}.

	Cluster algebras were introduced in early 2000's by Fomin and Zelevinsky in \cite{cluster1}. Since, they found applications in many areas of mathematics like combinatorics, Lie theory, Poisson geometry or representation theory. In their most simple incarnation, cluster algebras are commutative algebras over $\Z\P$ where $\P$ is some tropical semi-field. The generators, called \emph{cluster variables}, are gathered into sets of fixed finite cardinality called \emph{clusters}. Monomials in variables belonging to a same cluster are called \emph{cluster monomials}. The elements of a cluster algebra can always be expressed as Laurent polynomials in cluster variables belonging  to any fixed cluster, this is referred to as the \emph{Laurent phenomenon} \cite{cluster1}. An element in a cluster algebra $\mathcal A$ is called \emph{positive} if it can be expressed as a Laurent polynomial with coefficients in $\Z_{\geq 0}\P$ in every cluster of $\mathcal A$. A $\Z\P$-basis $\mathcal B$ in $\mathcal A$ is called \emph{canonically positive}, if positive elements in $\mathcal A$ coincide precisely with $\Z_{\geq 0}\P$-linear combinations of elements of $\mathcal B$. It is not known if there is necessarily a canonically positive basis in a cluster algebra $\mathcal A$. Nevertheless, if such a basis exists, its elements are uniquely determined up to normalization by elements of $\P$. Canonically positive bases were investigated in particular cases in \cite{shermanz, Cerulli:A21}.

	The research of bases, and especially canonically positive bases, in cluster algebras  was one of the main motivation for their study. In the symmetric \emph{coefficient-free} case, that is, when $\P=\ens 1$, Caldero and Keller proved that if the cluster algebra $\mathcal A$ is simply-laced of finite type (ie if it has only finitely many cluster variables), then cluster monomials form a $\Z$-basis in $\mathcal A$ \cite{CK1}. 

	For rank 2 cluster algebras of affine and finite type, Sherman and Zelevinsky managed to compute canonically positive bases with arbitrary coefficients \cite{shermanz}. In particular, the authors proved that if $\mathcal A$ is a coefficient-free rank 2 cluster algebra of affine type, the canonically positive basis of $\mathcal A$ is $\mathcal B(\mathcal A)=\ens{\textrm{cluster monomials}}\sqcup \ens{F_n(z)|n \geq 1}$ where $z$ is some well chosen particular positive element in $\mathcal A$ (see Section \ref{section:bases} for details). Using coefficient-free cluster characters, Caldero and Zelevinsky managed to compute a slightly different basis for the coefficient-free cluster algebra associated to the Kronecker quiver \cite{CZ}. Namely, this basis is given by $\ens{\textrm{cluster monomials}}\sqcup \ens{S_n(z)|n \geq 1}$. The presence of normalized second kind Chebyshev polynomials in this case comes from the study of coefficient-free cluster characters associated to regular modules over the path algebra of the Kronecker quiver.

	Let $Q=(Q_0,Q_1)$ be an acyclic quiver, that is, a quiver without oriented cycles where $Q_0$ is a finite set of vertices and $Q_1$ a finite set of arrows. Let $k=\C$ be the field of complex numbers, we denote by $kQ$ the path algebra of $Q$, by $kQ$-mod the category of finite dimensional left-$kQ$-modules and by $\mathcal C_Q$ the cluster category of $Q$. 

	Let $\textbf y$ be a $Q_0$-tuple of elements of $\P$. We denote by $\mathcal A(Q,\textbf y,\textbf x)$ the cluster algebra with principal coefficients at the initial seed $(Q,\textbf y, \textbf x)$ where $\textbf x=(x_i| i \in Q_0)$ and $\textbf y=(y_i| i \in Q_0)$ is a minimal set of generators of the semifield $\P$. 

	Inspired by works on cluster characters for the coefficient-free case \cite{CC,CK1,CK2,Palu}, Fu and Keller introduced in \cite{FK} cluster characters with coefficients in order to realize elements in the cluster algebra $\mathcal A(Q,\textbf y,\textbf x)$ from objects in the cluster category $\mathcal C_Q$. In particular cluster variables in $\mathcal A(Q,\textbf y, \textbf x)$ are characters associated to indecomposable rigid objects in the cluster category $\mathcal C_Q$. In this paper, we consider a more elementary description of cluster characters with coefficients than the one proposed in \cite{FK}. We will see in Section \ref{ssection:addingcoeffs} that these two definitions coincide. 
	The \emph{cluster character with coefficients on $\CC_Q$} is a map
	$$\XQy_?:\Ob(\CC_Q) \fl \Z[\textbf y][\textbf x^{\pm 1}]$$
	whose detailed definition will be given in section \ref{section:characters}. We denote by 
	$$X^Q_?:\Ob(\CC_Q) \fl \Z[\textbf x^{\pm 1}]$$
	the usual Caldero-Chapoton map introduced in \cite{CC,CK1}, which will also be referred to as the \emph{cluster character without coefficients on $\CC_Q$}. 

	In \cite{Dupont:stabletubes} (see also \cite{Propp:frieze} for a similar description in Dynkin type $\mathbb A$), we introduced a generalization of Chebyshev polynomials of the second kind arising in cluster algebras associated to acyclic representation-infinite quivers. More precisely, if $Q$ is a quiver of infinite representation type, the coefficient-free cluster character $X^Q_M$ of an indecomposable regular module $M$ can be expressed as a polynomial with integral coefficients evaluated at the characters of quasi-composition factors of $M$. The polynomials appearing were called \emph{generalized Chebyshev polynomials}.

	In \cite{Cerulli:A21}, the author gets interested in cluster algebras with coefficients associated to an affine quiver of type $\Aaffine_{2,1}$. It turned out that if the coefficients are not specialized at 1, generalized Chebyshev polynomials do not appear anymore. The aim of this paper is to introduce a certain deformation of generalized Chebyshev polynomials that allows to recover the polynomiality property for cluster characters with coefficients evaluated at indecomposable regular modules over the path algebra of a representation-infinite quiver.

	Whereas the final goal of this paper is to give an efficient tool for calculations in cluster algebras, most of the results can be read independently of the theory of cluster algebras.

	Our main results are the following~: Consider a family $\textbf q=\ens{q_i|i \in \Z}$ of indeterminates over $\Z$ and a family $\ens{x_{i,1}|i \in \Z}$ of indeterminates over $\Z[\textbf q]$. We define by induction a family $$\ens{x_{i,n} |i \in \Z, n \geq 1} \subset \Q(\textbf q)(x_{i,1} |i \in \Z)$$
	by
	\begin{equation}\label{eq:recidentity}
	      x_{i,n}x_{i+1,n}=x_{i,n+1}x_{i+1,n-1}+\prod_{k=1}^n q_{i+k}
	\end{equation}
	with the convention that $x_{i,0}=1$ for all $i \in \Z$.

	The first result of this paper is a polynomial closed expression for the $x_{i,n}$~:
	\begin{Theoquantized}
	      For any $n \geq 1$ and any $i \in \Z$, we have
	      $$x_{i,n}=\det \left[\begin{array}{ccccccc}
		      x_{i+n-1,1} & 1 &&& (0)\\
		      q_{i+n-1} & \ddots & \ddots \\
		      & \ddots & \ddots & \ddots \\
		      & & \ddots & \ddots & 1 \\
		      (0)& & & q_{i+1} & x_{i,1}
	      \end{array}\right].$$
	      In particular, $x_{i,n}$ is a polynomial in $\Z[q_{i+1}, \ldots, q_{i+n-1}, x_{i,1}, \ldots, x_{i+n-1,1}]$.
	\end{Theoquantized}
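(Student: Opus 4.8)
The plan is to prove the determinantal formula by induction on $n$, using the recurrence (\ref{eq:recidentity}) as the engine. First I would fix notation: let $D_{i,n}$ denote the $n \times n$ tridiagonal determinant on the right-hand side of the claimed formula, that is, the determinant of the matrix with diagonal entries $x_{i+n-1,1}, x_{i+n-2,1}, \ldots, x_{i,1}$ (reading top-left to bottom-right), superdiagonal entries all equal to $1$, and subdiagonal entries $q_{i+n-1}, q_{i+n-2}, \ldots, q_{i+1}$. The goal is to show $x_{i,n} = D_{i,n}$ for all $i \in \Z$ and $n \geq 1$, with the base cases $D_{i,0} = 1$ (empty determinant) matching the convention $x_{i,0}=1$, and $D_{i,1} = x_{i,1}$ being immediate.

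The key step is to establish that the determinants $D_{i,n}$ satisfy the same recurrence as the $x_{i,n}$. Expanding the tridiagonal determinant $D_{i,n}$ along its first row (or last row) gives a three-term recurrence of the standard continuant type, namely $D_{i,n} = x_{i+n-1,1}\, D_{i,n-1} - q_{i+n-1}\, D_{i,n-2}$, where I must be careful that the index shift in the continuant matches the shift appearing in (\ref{eq:recidentity}). What I actually want to prove is the multiplicative identity $D_{i,n}\,D_{i+1,n} = D_{i,n+1}\,D_{i+1,n-1} + \prod_{k=1}^{n} q_{i+k}$, since if the $D_{i,n}$ satisfy (\ref{eq:recidentity}) with the same initial data $D_{i,0}=1$ and $D_{i,1}=x_{i,1}$, then by induction $x_{i,n}=D_{i,n}$ for all $n$. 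I expect the cleanest route is to recognize this multiplicative identity as a Desnanot--Jacobi (Dodgson condensation) relation among minors of a single large tridiagonal matrix: the products $D_{i,n}D_{i+1,n}$, $D_{i,n+1}D_{i+1,n-1}$, and $\prod_k q_{i+k}$ correspond respectively to the product of two complementary connected minors, a second such product, and the product of the two ``corner'' minors obtained by deleting a row and column from each end.

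I would therefore set up one ambient tridiagonal matrix whose consecutive principal and near-principal minors realize all four quantities $D_{i,n}, D_{i+1,n}, D_{i,n+1}, D_{i+1,n-1}$ as contiguous minors, and then invoke the Desnanot--Jacobi identity relating the determinant, its four corner minors, and its central minor. Concretely, taking the $(n+1)\times(n+1)$ tridiagonal matrix with diagonal $x_{i+n,1}, \ldots, x_{i,1}$, the identity $\det M \cdot \det M^{(1,n+1)}_{(1,n+1)} = \det M^{1}_{1}\cdot \det M^{n+1}_{n+1} - \det M^{1}_{n+1}\cdot \det M^{n+1}_{1}$ should produce exactly (\ref{eq:recidentity}) once the off-diagonal ``cross'' minors are computed; because the matrix is tridiagonal, those cross minors are triangular and collapse precisely to the product $\prod_{k=1}^{n} q_{i+k}$ of the subdiagonal entries.

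The main obstacle I anticipate is purely bookkeeping rather than conceptual: getting every index shift and sign to line up. The subdiagonal-versus-superdiagonal asymmetry (entries $q$ below, $1$ above) means the two cross minors are not equal, and I must check that their product is $\prod_{k=1}^{n} q_{i+k}$ with the correct sign so that the minus sign in Desnanot--Jacobi turns into the plus sign in (\ref{eq:recidentity}); this is where an off-by-one in the indexing of the $q_{i+k}$ could creep in. If the direct Desnanot--Jacobi bookkeeping proves fragile, the safe fallback is the elementary induction: assume $x_{i,m}=D_{i,m}$ for all $i$ and all $m \le n$, substitute $D_{i+1,n-1}$ and $D_{i,n+1}$ via the continuant recurrence, and verify the multiplicative identity directly by expanding and telescoping, which reduces to a routine (if tedious) algebraic check. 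Either way the final sentence, that $x_{i,n} \in \Z[q_{i+1},\ldots,q_{i+n-1}, x_{i,1},\ldots,x_{i+n-1,1}]$, is immediate from the determinantal formula, since every entry of the matrix lies in that polynomial ring.
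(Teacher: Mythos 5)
Your proof is correct, but it takes a genuinely different route from the paper's. The paper proves the determinantal formula representation-theoretically: it realizes the $x_{i,n}$ as images of cluster characters $\XAy_{S_i^{(n)}}$ for an equioriented type $\A_r$ quiver (via the isomorphism of Lemma \ref{lem:simplealgind} and the matching recurrences of Lemma \ref{lem:multpsinA}), then uses the Caldero--Keller one-dimensional multiplication formula (Lemma \ref{lem:recpolyinA}) to obtain the three-term relation $x_{i,n}x_{i+n,1}=x_{i,n+1}+q_{i+n}x_{i,n-1}$, from which the tridiagonal determinant follows by expansion. You instead work entirely inside linear algebra: you verify that the continuant $D_{i,n}$ satisfies the multiplicative recurrence (\ref{eq:recidentity}) directly via the Desnanot--Jacobi identity applied to the $(n+1)\times(n+1)$ tridiagonal matrix, noting that the two cross minors are triangular with determinants $\prod_{k=1}^{n}q_{i+k}$ and $1$, so the minus sign in Desnanot--Jacobi becomes the plus sign in (\ref{eq:recidentity}); your index bookkeeping checks out. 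This is more elementary and self-contained, and it inverts the logical direction of the paper's remark that Dodgson's rule is a \emph{consequence} of Theorem 1 --- there is no circularity, since the general weighted Desnanot--Jacobi identity is classical and independent of the theorem. What the paper's longer route buys is the representation-theoretic content (the identification $\phi(\XAy_{S_j^{(k)}})=x_{j,k}$ and the three-term relation of Corollary \ref{corol:3terms}), which is then reused to prove Theorems 2 and 3. One small point worth making explicit in your induction: dividing by $D_{i+1,n-1}$ to conclude $x_{i,n+1}=D_{i,n+1}$ requires $D_{i+1,n-1}\neq 0$ in $\Q(\textbf q)(x_{i,1}\,|\,i\in\Z)$, which holds because the continuant is a nonzero polynomial (its leading monomial $\prod_j x_{i+1+j,1}$ has coefficient $1$).
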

	Note that the well-known Dodgson's determinant evaluation rule turns out to be a consequence of theorem 1 and equation (\ref{eq:recidentity}) when all the $q_i'$s are specialized at 1.

	Identifying naturally the ring $\Z[q_{i+1}, \ldots, q_{i+n-1}, x_{i,1}, \ldots, x_{i+n-1,1}]$ with a subring of  $q_{i}, \ldots, q_{i+n-1}, x_{i,1}, \ldots, x_{i+n-1,1}$, we denote by $P_{n}$ the polynomial in $2n$ variables such that
	$$x_{i,n}=P_{n}(q_i, \ldots, q_{i+n-1}, x_{i,1}, \ldots, x_{i+n-1,1})$$
	and $P_{n}$ is called the \emph{$n$-th quantized Chebyshev polynomial of infinite rank}. Note that the definition of $P_{n}$ does not depend on $i$.

	For any $p \geq 1$, the abelian group $p\Z$ acts $\Z$-linearly on $\Z[q_i, x_{i,1} |i \in \Z]$ by $kp.q_i=q_{i+kp}$ and $kp.x_i=x_{i+kp}$ for any $k \in \Z$. We denote by 
	$$\pi_p:\Z[q_i, x_{i,1} |i \in \Z] \fl \Z[q_i, x_{i,1} |i \in \Z]/p\Z$$
	the canonical map. We set $P_{n,p}$ to be the unique polynomial such that for every $i \in \Z$ and $n \geq 1$, we have 
	$$\pi_p(x_{i,n})=P_{n,p}(\pi_p(q_i), \ldots, \pi_p(q_{i+p-1}),\pi_p(x_i), \ldots, \pi_p(x_{i+p-1})).$$
	$P_{n,p}$ is called the \emph{$n$-th quantized Chebyshev polynomial of rank $p$}.
	If we denote by $k[p]$ the remainder of the euclidean division of an integer $k$ by $p$, $P_{n,p}$ is the polynomial such that 
	$P_{n,p}(q_{i[p]}, \ldots, q_{i+p-1[p]},x_{i[p],1}, \ldots, x_{i+p-1[p],1})$ is the determinant $$\det \left[\begin{array}{ccccccc}
	      x_{i+n-1[p],1} & 1 &&& (0)\\
	      q_{i+n-1[p]} & \ddots & \ddots \\
	      & \ddots & \ddots & \ddots \\
	      & & \ddots & \ddots & 1 \\
	      (0)& & & q_{i+1[p]} & x_{i[p],1}
	\end{array}\right]$$

	In the sequel, we will use the following notation~: if $J$ is a set, $\textbf a=\ens{a_i | i \in J}$ is a family of indeterminates over $\Z$ and $\nu=\ens{\nu_i | i \in J} \subset \Z$ has finite support, we write $\textbf a^\nu=\prod_{i \in J} a_i^{\nu_i}$.

	If $Q$ is a representation-infinite quiver, any regular component $\mathcal R$ in the Auslander-Reiten quiver $\Gamma(kQ)$ of $kQ$-mod is of the form $\Z\A_\infty/(p)$ for some $p \geq 0$ \cite[Section VIII.4, Theorem 4.15]{ARS}. We denote by $R_i, i \in \Z/p\Z$ the quasi-simple modules in $\mathcal R$, ordered such that $\tau R_i \simeq R_{i-1}$ for all $i \in \Z/p\Z$. For $i \in \Z/p\Z$ and $n \geq 1$, denote by $R_i^{(n)}$ the unique indecomposable module such that there exists a sequence of irreducible monomorphisms
	$$R_i \simeq R_i^{(1)} \fl R_i^{(2)} \fl \cdots \fl R_i^{(n)}.$$ 
	We say that $R_i^{(n)}$ has quasi-socle $R_i$ and quasi-length $n$. By convention $R_i^{(0)}$ denotes the zero module. The quotients $R_i^{(k)}/R_{i}^{(k-1)}$ for $k=1, \ldots, n$ are called the \emph{quasi-composition factors} of the module $M$. Every indecomposable module in $\mathcal R$ can be written $R_i^{(n)}$ for some $i \in \Z/p\Z$ and $n \geq 1$. 

	Our main result is that quantized Chebyshev polynomials appear naturally for cluster characters with coefficients associated to regular modules. 
	\begin{Theopolyreg}
	      Let $Q$ be a quiver of infinite representation type, $\mathcal R$ be a regular component in $\Gamma(kQ)$ and let $p \geq 0$ be such that $\mathcal R$ is of the form $\Z\A_\infty/(p)$. We denote by $\ens{R_i| i \in \Z/p\Z}$ the set of quasi-simple modules in $\mathcal R$, ordered such that $\tau R_i \simeq R_{i-1}$ for all $i \in \Z/p\Z$. Then for every $n \geq 1$ and $i \in \Z/p\Z$, we have
	      $$\XQy_{R_i^{(n)}}=P_{n}(\textbf y^{\ddim R_{i}}, \ldots, \textbf y^{\ddim R_{i+n-1}}, \XQy_{R_i}, \ldots, \XQy_{R_{i+n-1}})$$
	      or equivalently
	      $$\XQy_{R_i^{(n)}}=\det \left[\begin{array}{ccccccc}
		      \XQy_{R_{i+n-1}} & 1 &&& (0)\\
		      \textbf y^{\ddim R_{i+n-1}} & \ddots & \ddots \\
		      & \ddots & \ddots & \ddots \\
		      & & \ddots & \ddots & 1 \\
		      (0)& & & \textbf y^{\ddim R_{i+1}} & \XQy_{R_i}
	      \end{array}\right].$$
	      Moreover, if $p>0$, we have 
	      $$\XQy_{R_i^{(n)}}=P_{n,p}(\textbf y^{\ddim R_{i}}, \ldots, \textbf y^{\ddim R_{i+p-1}}, \XQy_{R_i}, \ldots, \XQy_{R_{i+p-1}}).$$
	\end{Theopolyreg}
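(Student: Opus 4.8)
The plan is to prove that the characters $\XQy_{R_i^{(n)}}$ obey the same three-term recurrence as the tridiagonal determinant of Theorem 1, and then to conclude by induction on the quasi-length $n$; by Theorem 1 this simultaneously yields the expression in terms of $P_n$ and the determinantal expression.

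First I would record the recurrence satisfied by the determinant. Expanding the $n \times n$ matrix of Theorem 1 along its last row gives, for $n \geq 2$,
$$x_{i,n} = x_{i,1}\, x_{i+1,n-1} - q_{i+1}\, x_{i+2,n-2},$$
with the conventions $x_{j,0} = 1$ and $x_{j,1}$ the initial variables. Under the substitution $q_j \mapsto \textbf y^{\ddim R_j}$ and $x_{j,1} \mapsto \XQy_{R_j}$, and since $\XQy_0 = 1$ is the character of the zero module while $\XQy_{R_i} = \XQy_{R_i^{(1)}}$, the theorem reduces to the multiplication formula
$$\XQy_{R_i}\, \XQy_{R_{i+1}^{(n-1)}} = \XQy_{R_i^{(n)}} + \textbf y^{\ddim R_{i+1}}\, \XQy_{R_{i+2}^{(n-2)}}, \qquad n \geq 2,$$
together with the uniqueness of the solution of the recurrence with prescribed values at $n = 0, 1$.

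The core step is to establish this multiplication formula by applying the cluster multiplication formula with coefficients (Section \ref{section:characters}) to the pair $(R_i, R_{i+1}^{(n-1)})$, in which $R_i$ is quasi-simple. Using the Auslander--Reiten formula $\Ext^1(X, Y) \cong D\Hom(Y, \tau X)$ over the hereditary algebra $kQ$ together with $\tau R_{i+1}^{(n-1)} \simeq R_i^{(n-1)}$, one finds that $\Ext^1(R_{i+1}^{(n-1)}, R_i) \cong D\Hom(R_i, R_i^{(n-1)})$ is one-dimensional, spanned by the quasi-socle inclusion, whereas $\Ext^1(R_i, R_{i+1}^{(n-1)}) \cong D\Hom(R_{i+1}^{(n-1)}, R_{i-1})$ vanishes; hence $\dim \Ext^1_{\CC_Q}(R_i, R_{i+1}^{(n-1)}) = 1$ and the two-term formula applies. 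The genuine non-split extension is the short exact sequence $0 \to R_i \to R_i^{(n)} \to R_{i+1}^{(n-1)} \to 0$ obtained by adjoining $R_i$ at the quasi-socle, and it contributes $\XQy_{R_i^{(n)}}$; the remaining triangle reflects the canonical sequence $0 \to R_{i+1} \to R_{i+1}^{(n-1)} \to R_{i+2}^{(n-2)} \to 0$. Specializing $\textbf y \to 1$ recovers the coefficient-free multiplication formula of \cite{Dupont:stabletubes}, which confirms that the two objects involved are exactly $R_i^{(n)}$ and $R_{i+2}^{(n-2)}$; it then remains only to compute the two $\textbf y$-monomials from the Fu--Keller grading, the first being trivial and the second being $\textbf y^{\ddim R_{i+1}}$, the monomial attached to the cancelled quasi-socle $R_{i+1}$.

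With the multiplication formula in hand the induction on $n$ is immediate: the base cases $\XQy_{R_i^{(0)}} = 1$ and $\XQy_{R_i^{(1)}} = \XQy_{R_i}$ match the empty and $1 \times 1$ determinants, and the inductive step is precisely the recurrence above, so uniqueness yields both the $P_n$ and the determinant expressions. For the rank $p > 0$ statement I would invoke the $p$-periodicity of the quasi-simples, $R_{i+p} = R_i$, whence $\textbf y^{\ddim R_{i+p}} = \textbf y^{\ddim R_i}$ and $\XQy_{R_{i+p}} = \XQy_{R_i}$; the evaluation then factors through the map $\pi_p$, and the identity $\XQy_{R_i^{(n)}} = P_{n,p}(\ldots)$ follows from the definition of $P_{n,p}$. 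The main obstacle is the coefficient bookkeeping in the multiplication formula: pinning down the exact monomial $\textbf y^{\ddim R_{i+1}}$ accompanying the Auslander--Reiten--dual triangle, and checking that the middle objects are genuinely $R_i^{(n)}$ and $R_{i+2}^{(n-2)}$ rather than a decomposable object or a shift, which requires careful control of the Euler form in the Fu--Keller character and attention to the small-$n$ and small-rank degeneracies where $R_{i+2}^{(n-2)}$ becomes the zero module.
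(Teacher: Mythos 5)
Your global strategy (establish a recurrence for the characters matching a cofactor expansion of the determinant of Theorem~1, then induct on quasi-length) is viable, and your treatment of the base cases and of the periodicity argument for $p>0$ is fine. The gap is in the core step. To prove $\XQy_{R_i}\XQy_{R_{i+1}^{(n-1)}}=\XQy_{R_i^{(n)}}+\textbf y^{\ddim R_{i+1}}\XQy_{R_{i+2}^{(n-2)}}$ you invoke the one-dimensional Caldero--Keller multiplication formula, asserting that $\Ext^1_{kQ}(R_i,R_{i+1}^{(n-1)})\simeq D\Hom_{kQ}(R_{i+1}^{(n-1)},\tau R_i)=D\Hom_{kQ}(R_{i+1}^{(n-1)},R_{i-1})$ vanishes and that $\Hom_{kQ}(R_i,R_i^{(n-1)})$ is one-dimensional. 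Neither holds in the generality required by the theorem. In a tube of rank $p$ the quasi-top of $R_{i+1}^{(n-1)}$ is $R_{i+n-1}$, so $\Hom_{kQ}(R_{i+1}^{(n-1)},R_{i-1})\neq 0$ whenever $p$ divides $n$; in particular in a homogeneous tube ($p=1$), which exists for every affine quiver, one has $\dim\Ext^1_{\CC_Q}(R_i,R_{i+1}^{(n-1)})\geq 2$ for \emph{every} $n\geq 2$, and in a $\Z\A_\infty$ component of a wild quiver the $\Hom$-spaces between regular modules also grow, so the dimension count fails there as well. Thus the two-term formula is unavailable for infinitely many of the pairs you need, even though the identity you are after is true. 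A secondary problem: even where $\dim\Ext^1_{\CC_Q}=1$, the paper only establishes the multiplication formula \emph{with coefficients} for almost split sequences (Proposition~\ref{prop:multps}); the one-dimensional formula with coefficients is only reached via the framed quiver $\widehat Q$, and identifying the monomial $\textbf y^{\ddim R_{i+1}}$ is exactly the kernel/cokernel computation carried out in Lemma~\ref{lem:recpolyinA} for type $\A$ --- it is not mere bookkeeping and would have to be redone for regular components.

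The paper's proof avoids all of this by using a different cofactor identity. Applying Proposition~\ref{prop:multps} to the almost split sequences (\ref{eq:suiteARinT}) gives $\XQy_{R_i^{(n)}}\XQy_{R_{i+1}^{(n)}}=\XQy_{R_i^{(n+1)}}\XQy_{R_{i+1}^{(n-1)}}+\textbf y^{\ddim R_{i+1}^{(n)}}$, which is precisely the defining recurrence (\ref{eq:recidentity}) of the $x_{i,n}$ under the substitution $q_j\mapsto\textbf y^{\ddim R_j}$, $x_{j,1}\mapsto\XQy_{R_j}$ (note $\ddim R_{i+1}^{(n)}=\sum_{k=1}^{n}\ddim R_{i+k}$); one then quotes Theorem~1 for the closed polynomial and determinantal form. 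If you replace your three-term recurrence by this one, every exact sequence you need is almost split, the coefficient monomial is supplied directly by Proposition~\ref{prop:multps}, and your induction goes through.
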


	We also prove that quantized Chebyshev polynomials arise in cluster algebras of Dynkin type $\mathbb A$. For any integer $r \geq 1$, let $A$ be the quiver of type $\Alin{r}$, that is, of Dynkin type $\mathbb A_r$ equipped with the following linear orientation~:
	$$\xymatrix{
		0 & \ar[l] 1 & \ar[l] 2 & \ar[l] \cdots & \ar[l] r-1
	}$$
	Let $\mathcal A(A,\textbf x,\textbf y)$ be the cluster algebra with principal coefficients at the initial seed $(A,\textbf x,\textbf y)$ and $X^{A,\textbf y}$ be the cluster character with coefficients on $\CC_{A}$. 
	
	For any $i \in [0,r-1]$, we denote by $S_i$ the simple $kA$-module associated to the vertex $i$ and for any $n \in [1, r-i]$, we denote by $S_i^{(n)}$ the indecomposable $kA$-module with socle $S_i$ and length $n$. 
	We prove~:
	\begin{TheopolyA}
		Let $r \geq 1$ be an integer and $A$ be the above quiver of type $\Alin{r}$. Then, for any $i \in [0,r-1]$ and $n \in [1,r-i]$, we have
		$$
			X^{A,\textbf y}_{S_i^{(n)}}=P_n(y_i, \ldots, y_{i+n-1}, X^{A,\textbf y}_{S_i}, \ldots, X^{A,\textbf y}_{S_{i+n-1}})
		$$
		or equivalently
		$$X^{A,\textbf y}_{S_i^{(n)}}=\det \left[\begin{array}{ccccccc}
		      X^{A,\textbf y}_{S_{i+n-1}} & 1 &&& (0)\\
		      y_{i+n-1} & \ddots & \ddots \\
		      & \ddots & \ddots & \ddots \\
		      & & \ddots & \ddots & 1 \\
		      (0)& & & y_{i+1} & X^{A,\textbf y}_{S_i}
	      \end{array}\right].$$
	\end{TheopolyA}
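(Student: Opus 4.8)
The plan is to prove Theorem 3 by reducing it to the recursive identity (\ref{eq:recidentity}) and the closed determinantal formula of Theorem 1. Once the appropriate three-term recursion is established for the characters $X^{A,\textbf y}_{S_i^{(n)}}$, the polynomial expression follows formally, since the polynomials $P_n$ were \emph{defined} to encode exactly that recursion. So the mathematical content lies entirely in producing, for the equioriented type $\mathbb A_r$ quiver $A$, an identity of the shape
\begin{equation*}
	X^{A,\textbf y}_{S_i^{(n)}} X^{A,\textbf y}_{S_{i+1}^{(n)}} = X^{A,\textbf y}_{S_i^{(n+1)}} X^{A,\textbf y}_{S_{i+1}^{(n-1)}} + y_{i+1} \cdots y_{i+n},
\end{equation*}
matching (\ref{eq:recidentity}) under the substitution $q_{i+k} \mapsto y_{i+k}$ and $x_{i,n} \mapsto X^{A,\textbf y}_{S_i^{(n)}}$, together with the base cases $X^{A,\textbf y}_{S_i^{(1)}} = X^{A,\textbf y}_{S_i}$ and the empty-module convention $X^{A,\textbf y}_{S_i^{(0)}} = 1$.

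First I would record the indexing: for the linearly oriented $A$, the indecomposable $kA$-modules $S_i^{(n)}$ are exactly the intervals, and their Auslander--Reiten structure gives short exact sequences relating the $S_i^{(n)}$ of adjacent quasi-lengths; in particular there are almost split sequences $0 \to S_{i+1}^{(n-1)} \to S_i^{(n)} \oplus S_{i+1}^{(n)} \to S_i^{(n+1)} \to 0$ (in the cluster-category picture, the relevant triangles). The key tool is the multiplication formula for the cluster character with coefficients $X^{A,\textbf y}_?$, which expresses a product $X^{A,\textbf y}_L X^{A,\textbf y}_M$ in terms of characters of the middle terms of the two nonsplit extensions between $L$ and $M$, weighted by an appropriate monomial in $\textbf y$. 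Applying this formula to $L = S_i^{(n)}$ and $M = S_{i+1}$, or more symmetrically to the pair governing the mesh, should yield precisely the desired quadratic identity; the monomial $y_{i+1}\cdots y_{i+n}$ must come out as $\textbf y^{\ddim R}$ for the relevant regular/interval module, which for type $\mathbb A$ is the product of the $y_j$ over the support of the interval.

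The cleanest route is to argue by induction on $n$. For the base cases $n=1$ (trivial) and $n=2$ one checks the multiplication formula directly. For the inductive step one applies the cluster-character multiplication formula to obtain the three-term relation, then invokes the fact that the sequence $\ens{X^{A,\textbf y}_{S_i^{(n)}}}$ satisfies the same recursion (\ref{eq:recidentity}) as $\ens{x_{i,n}}$ with the same initial data; Theorem 1 then identifies the common solution with $P_n$ and with the stated determinant. One subtlety to address is that the general definition of $P_n$ and of the recursion in the introduction uses an infinite family of variables indexed by $\Z$, whereas type $\mathbb A_r$ is finite and not all quasi-lengths are available (we only have $n \in [1, r-i]$); so I would make sure the recursion is only invoked within the valid range of quasi-lengths, where every module $S_i^{(n)}$ actually exists, and the boundary case $S_{i+1}^{(n-1)} = 0$ feeds in the convention $x_{i,0}=1$ correctly.

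The main obstacle I expect is establishing the multiplication formula in exactly the form needed and verifying that the coefficient monomial is $y_{i+1}\cdots y_{i+n}$ rather than some other product. This requires controlling the $\Ext^1$ spaces between the interval modules in type $\mathbb A$ (showing the relevant $\Ext^1$ is one-dimensional so that there is a single nonsplit extension in each direction) and tracking the precise $\textbf y$-grading attached to the extension in the definition of $X^{A,\textbf y}_?$. Provided Theorem 2 (for which type $\mathbb A$ equioriented quivers are handled as a parallel case) supplies the same multiplication machinery, Theorem 3 becomes the specialization of that argument to the finite situation, with the only genuine work being the careful bookkeeping of coefficients and the verification that the induction stays within the admissible range of quasi-lengths.
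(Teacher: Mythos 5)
Your proposal is essentially correct and follows the same overall architecture as the paper: derive a recursion for the characters $\XAy_{S_i^{(n)}}$ from Auslander--Reiten theory via the multiplication formula with coefficients, match it against the defining recursion of the $x_{i,n}$, and invoke Theorem 1 (Proposition \ref{prop:Pnpoly}) to conclude. Two remarks. First, the paper's proof of the corollary does not perform the final identification through the mesh relation but through the three-term recurrence $\XAy_{S_i^{(n)}}\XAy_{S_{i+n}}=\XAy_{S_i^{(n+1)}}+y_{i+n}\XAy_{S_{i}^{(n-1)}}$ of Lemma \ref{lem:recpolyinA}, which is the real work of Section \ref{section:typeA}: it is obtained from the Caldero--Keller one-dimensional multiplication formula applied in $\CC_{\widehat A}$ to $\iota(S_{i+n})$ and $\iota(S_i^{(n)})$, with an explicit computation of the kernel and cokernel of the relevant morphism. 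That recurrence expresses $\XAy_{S_i^{(n+1)}}$ polynomially in lower terms, so the induction $\pi(x_{i,n})=\XAy_{S_i^{(n)}}$ requires no division. Your route through the mesh relation (the paper's Lemma \ref{lem:multpsinA}, which follows from the easier Proposition \ref{prop:multps}) is lighter and also works, but you should make the cancellation step explicit: the mesh relation determines $\XAy_{S_i^{(n+1)}}$ only after cancelling the factor $\XAy_{S_{i+1}^{(n-1)}}$, so you need to observe that characters are nonzero elements of the integral domain $\Z[\textbf y][\textbf x^{\pm 1}]$ and that the corresponding relation among the polynomials $P_n$ is a genuine polynomial identity (it is, by Theorem 1). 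Second, a slip that does not propagate: the almost split sequence you write, $0 \to S_{i+1}^{(n-1)} \to S_i^{(n)}\oplus S_{i+1}^{(n)} \to S_i^{(n+1)} \to 0$, is not exact (there is no nonzero map $S_{i+1}^{(n-1)} \to S_i^{(n)}$, since the former is a quotient, not a submodule, of the latter); the correct sequence is $0 \to S_i^{(n)} \to S_i^{(n+1)}\oplus S_{i+1}^{(n-1)} \to S_{i+1}^{(n)} \to 0$, which via Proposition \ref{prop:multps} yields exactly the quadratic identity you display, with coefficient monomial $\textbf y^{\ddim S_{i+1}^{(n)}}=y_{i+1}\cdots y_{i+n}$ as you claim.
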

	Note that this result was obtained independently by Yang and Zelevinsky by considering generalized minors \cite{YZ:ppalminors}.

	The paper is organized as follows. In section \ref{section:characters}, we give definitions and properties of cluster characters with and without coefficients. In section \ref{section:typeA}, we study in detail cluster characters with coefficients for equioriented Dynkin quivers of type $\A$. The study of Dynkin type $\A$ allows to define quantized Chebyshev polynomials in section \ref{section:quantized} where Theorem 1 and Theorem 3 are proved. In section \ref{section:polyreg}, we prove Theorem 2 and give some explicit computations in cluster algebras of type $\Aaffine_{2,1}$. In section \ref{section:firstandsecond}, we study algebraic properties of some particular quantized Chebyshev polynomials, namely the quantized versions of normalized Chebyshev polynomials of the first and second kinds. Finally, in section \ref{section:bases}, we give examples and conjectures for these polynomials to appear in bases, and especially canonically positive bases, in cluster algebras of affine types.
\end{section}

\begin{section}{Cluster characters}\label{section:characters}
	\begin{subsection}{Definitions and basic properties}
	      Let $Q$ be an acyclic quiver. We denote by $\mathcal A(Q,\textbf x, \textbf y)$ the cluster algebra with principal coefficients at the initial seed $(Q, \textbf x, \textbf y)$ where $\textbf y=\ens{y_i | i \in Q_0}$ is the initial coefficient $Q_0$-tuple and where $x=\ens{x_i | i \in Q_0}$ is the initial cluster. We simply denote by $\mathcal A(Q, \textbf x)$ the coefficient-free cluster algebra with initial seed $(Q,\textbf x)$.

	      Let $k=\C$ be the field of complex numbers and $kQ$-mod be the category of finite dimensional left-modules over the path algebra of $Q$. All along this paper, this category will be identified with the category $\rep(Q)$ of finite dimensional representations of $Q$ over $k$. We denote by $\tau_{kQ}$ (or simply $\tau$) the Auslander-Reiten translation on $kQ$-mod. Let $\mathcal D=D^b(kQ)$ be the bounded derived category of $Q$ with shift functor denoted by $[1]_{kQ}$ (or simply $[1]$). We denote by $\mathcal C_Q$ the cluster category of the quiver $Q$, that is, the orbit category $\mathcal D/F$ of the auto-functor $F=\tau^{-1}[1]$ in $\mathcal D$. This is an additive triangulated category \cite{K}, 2-Calabi-Yau whose indecomposable objects are given by indecomposable $kQ$-modules and shifts of indecomposable projective $kQ$-modules \cite{BMRRT}. This category was independently introduced by Caldero, Chapoton and Schiffler for the type $\A$ case \cite{CCS1}.
	      
	      For every $i \in Q_0$, we denote by $S_i$ the simple $kQ$-module associated to the vertex $i$, $P_i$ its projective cover and $I_i$ its injective hull. We denote by $\alpha_i=\ddim S_i$ the dimension vector of $S_i$. Since $\ddim$ induces an isomorphism of abelian groups $K_0(kQ) \fl \Z^{Q_0}$, $\alpha_i$ is identified with the $i$-th vector of the canonical basis of $\Z^{Q_0}$.
	      
	      As $Q$ is acyclic, $kQ$ is a finite dimensional hereditary algebra, we denote by $\<-, -\>$ the Euler form on $kQ$-mod. It is given by
	      $$\<M,N\>=\dim \Hom_{kQ}(M,N)-\dim \Ext^1_{kQ}(M,N)$$
	      for any $kQ$-modules $M$ and $N$. Note that $\<-,-\>$ is well-defined on the Grothendieck group. 
	      
	      For any $kQ$-module $M$ and any dimension vector $\textbf e$, we denote by 
	      $$\Gr_{\textbf e}(M) =\ens{N \subset M | \ddim N=\textbf e}$$
	      the \emph{grassmannian of submodules of dimension $\textbf e$ of $M$}. This is a projective variety and we denote by $\chi(\Gr_{\textbf e}(M))$ its Euler characteristic with respect to the simplicial cohomology.
	      
	      Roughly speaking, a \emph{cluster character} evaluated at a $kQ$-module $M$ is some normalized generating series for Euler characteristics of grassmannians of submodules of the module $M$. More precisely~:
	      \begin{defi}\label{defi:characters}
		      The \emph{cluster character with coefficients} on $kQ$-mod is the map $\Ob(\mathcal C_Q) \fl \Z[\textbf y][\textbf x^{\pm 1}]$ defined as follows~:
		      \begin{enumerate}
			      \item[a.] If $M$ is an indecomposable $kQ$-module, we set
				      \begin{equation}\label{eq:XMy}
					      X^{Q,\textbf y}_M=\sum_{\textbf e \in \N^{Q_0}} \chi(\Gr_{\textbf e}(M)) \prod_{i \in Q_0} x_i^{-\<\textbf e,\alpha_i\>-\<\alpha_i, \ddim M-\textbf e\>}y_i^{e_i};
				      \end{equation}
			      \item[b.] if $M \simeq P_i[1]$ is the shift of an indecomposable projective module, we set
				      $$X^{Q,\textbf y}_M=x_i;$$
			      \item[c.] for any two objects $M,N$ in $\mathcal C_Q$, we set 
				      $$X^{Q,\textbf y}_MX^{Q,\textbf y}_N=X^{Q,\textbf y}_{M \oplus N}.$$
		      \end{enumerate}
	      \end{defi}
	      It follows from the definition that equation (\ref{eq:XMy}) holds for any $kQ$-module. Note that cluster characters are invariant on isoclasses.
	      
	      For any object $M$ in $\mathcal C_Q$, we denote by $X^Q_M$ the value of the Caldero-Chapoton map at $M$. Equivalently, $X^Q_M$ is the specialization of $X^{Q,\textbf y}_M$ at $y_i=1$ for all $i \in Q_0$.
	      
	      We now prove a multiplication formula on almost split sequences for $\XQy_?$. This is an analogue to \cite[Proposition 3.10]{CC} for the Caldero-Chapoton map.
	      \begin{prop}\label{prop:multps}
		      Let $Q$ be an acyclic quiver, $N$ be an indecomposable non-projective module. Then
		      $$\XQy_M\XQy_N=\XQy_B+\textbf y^{\ddim N}$$
		      where $B$ is the unique $kQ$-module such that there exists an almost split sequence
		      $$0 \fl M \xrightarrow{i} B \xrightarrow{p} N \fl 0.$$
	      \end{prop}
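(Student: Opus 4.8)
The plan is to reduce the identity to the coefficient-free multiplication formula of Caldero--Chapoton and then to keep track of the extra $\textbf y$-grading carried by the submodule dimension vectors. First, note that $M=\tau N$, so the almost split sequence $0 \to M \to B \to N \to 0$ and the split sequence $0 \to M \to M\oplus N \to N \to 0$ both have middle term of dimension vector $\ddim B = \ddim(M\oplus N) = \ddim M + \ddim N$. By axiom (c) of Definition \ref{defi:characters} we have $\XQy_M\XQy_N=\XQy_{M\oplus N}$, so it suffices to prove $\XQy_{M\oplus N}=\XQy_B+\textbf y^{\ddim N}$. Because $B$ and $M\oplus N$ share the same total dimension vector, for each fixed $\textbf e \in \N^{Q_0}$ the quantity $\prod_i x_i^{-\<\textbf e,\alpha_i\>-\<\alpha_i,\ddim B-\textbf e\>}\,\textbf y^{\textbf e}$ attached to submodules of dimension $\textbf e$ in the expansion (\ref{eq:XMy}) is literally the same whether one expands $\XQy_{M\oplus N}$ or $\XQy_B$. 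Thus the claim becomes a comparison of Euler characteristics of submodule grassmannians, dimension vector by dimension vector.

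Second, I would invoke the geometric heart of \cite[Proposition 3.10]{CC}: the argument there establishes, for every $\textbf e \in \N^{Q_0}$, that $\chi(\Gr_{\textbf e}(M\oplus N)) = \chi(\Gr_{\textbf e}(B))$ whenever $\textbf e \neq \ddim N$, together with $\chi(\Gr_{\ddim N}(M\oplus N)) = \chi(\Gr_{\ddim N}(B))+1$. The submodule responsible for the extra $+1$ is the direct summand $N \subset M\oplus N$, which has no analogue inside $B$ precisely because the sequence $0\to M\to B\to N\to 0$ is not split; equivalently, by the almost split property no submodule of $B$ maps isomorphically onto $N$ under $p$. Since the $\textbf y$-weight $\textbf y^{\textbf e}$ and the $x$-monomial depend only on $\textbf e$ and on the common dimension vector $\ddim B$, these grassmannian identities combine with (\ref{eq:XMy}) to show that $\XQy_{M\oplus N}-\XQy_B$ equals exactly the single contribution coming from that extra summand $N$.

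Third, I would verify that this leftover contribution is $\textbf y^{\ddim N}$. For the summand $N\subset M\oplus N$ we have $\textbf e=\ddim N$ and $\ddim B-\textbf e=\ddim M=\ddim \tau N$, so its $x$-exponents are $-\<\ddim N,\alpha_i\>-\<\alpha_i,\ddim \tau N\>$. Using $\ddim \tau N=\Phi(\ddim N)$, valid because $N$ is indecomposable non-projective, together with the Euler--Coxeter identity $\<a,b\>=-\<b,\Phi a\>$ on $K_0(kQ)$, each such exponent vanishes; hence the $x$-monomial equals $1$ and the surviving term is precisely $\textbf y^{\ddim N}$. The main obstacle is the second step: importing the grassmannian comparison attached to the almost split sequence and checking that, after refining to the $\textbf y$-graded setting, the only discrepancy between $\Gr(M\oplus N)$ and $\Gr(B)$ is the isolated summand $N$ sitting at dimension vector $\ddim N$. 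Everything else is bookkeeping with the Euler form.
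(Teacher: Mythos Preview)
Your proposal is correct and follows essentially the same approach as the paper's proof: both reduce to the Caldero--Chapoton almost-split comparison of grassmannians from \cite{CC}, observe that the $x$- and $\textbf y$-monomials depend only on $\textbf e$ and on the common dimension vector $\ddim B=\ddim M+\ddim N$, and then compute the single surviving term at $\textbf e=\ddim N$ using the Coxeter identity $\<\ddim N,\alpha_i\>=-\<\alpha_i,\ddim\tau N\>$. The only cosmetic difference is that the paper phrases the grassmannian comparison via the product decomposition $\Gr_{\textbf e}(M\oplus N)\simeq\bigsqcup_{\textbf f+\textbf g=\textbf e}\Gr_{\textbf f}(M)\times\Gr_{\textbf g}(N)$ and the explicit map $\zeta_{\textbf e}:L\mapsto(i^{-1}(L),p(L))$ with affine fibers away from $(0,N)$, whereas you cite the resulting Euler-characteristic identities directly.
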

	      \begin{proof}
		      The proof is almost the same as in \cite{CC} for the coefficient-free case. We give it for completeness. We write $\textbf m=\ddim M$ and $\textbf n=\ddim N$. We thus have
		      $$\XQy_M\XQy_N=\XQy_{M \oplus N}
			      =\sum_{\textbf e \in \N^{Q_0}}\chi(\Gr_{\textbf e}(M \oplus N)) \prod_i x_i^{-\<\textbf e, \alpha_i\>-\<\alpha_i, \textbf m+\textbf n-\textbf e\>} y_i^{e_i}.$$
		      Since the varieties $\Gr_{\textbf e}(M \oplus N)$ and $\bigsqcup_{\textbf f + \textbf g=\textbf e}\Gr_{\textbf f}(M) \times \Gr_{\textbf g}(N)$ are isomorphic, 
		      we get
		      $$\XQy_{M\oplus N}
			      =\sum_{\textbf f,\textbf g}\chi(\Gr_{\textbf f}(M))\chi(\Gr_{\textbf g}(N)) \prod_i x_i^{-\<\textbf f+\textbf g, \alpha_i\>-\<\alpha_i, \textbf m+\textbf n-\textbf f-\textbf g\>} y_i^{f_i+g_i}.$$
		      We now consider the case where $\textbf f=0$ and $\textbf g=\ddim N$. Since 
		      $$\Gr_{0}(M) \times \Gr_{\ddim N}(N)=\ens{(0,N)}$$
		      the corresponding Laurent monomial in $\XQy_{M\oplus N}$ is 
		      $$\prod_i x_i^{-\<\textbf n, \alpha_i\>-\<\alpha_i, \textbf m\>} y_i^{n_i}$$
		      but $\textbf m=c(\textbf n)$ where $c$ is the Coxeter transformation induced on $K_0(kQ)$ by the Auslander-Reiten translation. Thus, $\<\textbf n, \alpha_i\>=-\<\alpha_i, \textbf m\>$ and then 
		      $$\prod_i x_i^{-\<\textbf n, \alpha_i\>-\<\alpha_i, \textbf m\>} y_i^{n_i}=\prod_i y_i^{n_i}=\textbf y^{\ddim N}.$$
		      
		      Now, since the sequence is almost split, for every $\textbf e \in \N^{Q_0}$, the map 
		      $$\zeta_{\textbf e}:\left\{\begin{array}{rcl}
			      \Gr_{\textbf e}(B) & \fl & \bigsqcup_{\textbf f + \textbf g=\textbf e} \Gr_{\textbf f}(M) \times \Gr_{\textbf g}(N)\\
			      L & \mapsto & (i^{-1}(L), p(L))
		      \end{array}\right.$$
		      is an algebraic homomorphism such that the fiber of a point $(A,C)$ is empty if and only if $(A,C)=(0,N)$ and is an affine space otherwise. It thus follows that 
		      $$\XQy_M\XQy_N=\XQy_B+\textbf y^{\ddim N}$$
		      and the proposition is proved.
	      \end{proof}
	\end{subsection}

	\begin{subsection}{Adding coefficients to cluster characters}\label{ssection:addingcoeffs}
	      We now prove that in order to compute cluster characters with coefficients associated to a quiver $Q$, it suffices to compute cluster characters without coefficients for a certain $\widehat{Q}$ obtained from the quiver $Q$.
	      More precisely let $Q=(Q_0,Q_1)$ be an acyclic quiver, we denote by $\widehat{Q}=({\widehat Q}_0,{\widehat Q}_1)$ the acyclic quiver with vertex set consisting of two copies of $Q_0$. The first copy is identified with $Q_0$ and the second copy is denoted by 
	      $$Q_0'=\ens{\sigma(v) |v \in Q_0}$$
	      where $\sigma$ is a fixed bijection $Q_0 \fl Q_0'$.
	      For any $v \neq w \in {\widehat{Q}}_0$, if $v,w \in Q_0$, the arrows from $v$ to $w$ in $\widehat Q_1$ are given by the arrows from $v$ to $w$ in $Q_1$, otherwise there are only arrows $v \fl \sigma(v)$ in $\widehat Q_1$ where $v$ runs over $Q_0$. In particular, we can identify $Q_1$ with a subset of $\widehat Q_1$. The quiver $\widehat{Q}$ is called the \emph{framed quiver associated $Q$}. By construction, the framed quiver of an acyclic quiver is itself acyclic. Note that framed quiver are familiar objects in the context of quiver varieties (see e.g.\cite{Nakajima:varietieswquivers}). 
	      
	      Given an acyclic quiver $R=(R_0,R_1)$ we denote by $B(R)$ the incidence matrix of $R$. That is the skew-symmetric matrix $(b_{ij}) \in M_{R_0}(\Z)$ whose entries are given by
	      $$b_{ij}=|\ens{\alpha:i \fl j \in R_1}|-|\ens{\alpha:j \fl i \in R_1}|$$
	      for any $i,j \in R_0$.
	      
	      We thus have
	      $$B(\widehat Q)=\left[\begin{array}{cc}
		      B(Q) & I\\
		      -I & 0
	      \end{array}\right].$$
	      
	      The category $kQ$-mod can be canonically identified with a subcategory of $k\widehat{Q}$-mod. We denote by $\iota: kQ\textrm{-mod} \fl k\widehat{Q}\textrm{-mod}$ the corresponding embedding, realizing $kQ$-mod as a full, exact, extension-closed subcategory of $k\widehat{Q}$-mod. Dimension vectors induce bijections $K_0(kQ\textrm{-mod}) \simeq \Z^{Q_0}$ and $K_0(k\widehat{Q}\textrm{-mod}) \simeq \Z^{\widehat Q_0} $. Identifying $\Z^{Q_0}$ with $\Z^{Q_0} \times \ens{0} \subset \Z^{Q_0} \times \Z^{Q'_0} \simeq \Z^{\widehat Q_0}$ we can identify $K_0(kQ\textrm{-mod})$ with a subgroup of $K_0(k\widehat{Q}\textrm{-mod})$.
	      
	      Let $\mathcal A(\widehat Q, \textbf u)$ be the coefficient-free cluster algebra with initial seed $(\widehat Q, \textbf u)$ where $\textbf u=\ens{u_i | i \in \widehat Q_0}$. According to the Laurent phenomenon, it is a subring of the ring $\Z[\textbf u^{\pm 1}]$ of Laurent polynomials in $\textbf u$. We denote by $X^{\widehat Q}_?: \Ob(\CC_{\widehat Q}) \fl \Z[\textbf u^{\pm 1}]$ the Caldero-Chapoton map on $\CC_{\widehat Q}$. For any $k{\widehat{Q}}$-module $M$, the value of the Caldero-Chapoton map at $M$ is thus given by~:
	      $$X^{\widehat{Q}}_M=\sum_{\textbf e \in \N^{\widehat Q_0}} \chi(\Gr_{\textbf e}(M)) \prod_{i \in \widehat Q_0} u_i^{-\<\textbf e,\alpha_i\>-\<\alpha_i, \ddim M-\textbf e\>}$$ 
	      where $\<-,-\>$ denotes the Euler form on $k\widehat{Q}$-mod. 
	      
	      We consider the homomorphism of $\Z$-algebras
	      $$\pi:\left\{ 
	      \begin{array}{rcll}
		      \Z[\textbf u^{\pm 1}] & \fl & \Z[\textbf y^{\pm 1},\textbf x^{\pm 1}]\\
		      u_i & \mapsto & x_i &\textrm{ if } i \in Q_0,\\
		      u_j & \mapsto & y_i &\textrm{ if } j=\sigma(i) \in Q_0'.\\
	      \end{array}\right.$$
	      
	      \begin{lem}\label{lem:addingcoeffs}
		      For any $kQ$-module $M$, we have
		      $$\XQy_M=\pi\left(X^{\widehat{Q}}_{\iota(M)}\right).$$
	      \end{lem}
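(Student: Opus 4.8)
The plan is to compare the two explicit generating-series formulas for $\XQy_M$ and for $X^{\widehat Q}_{\iota(M)}$ term by term, since both are defined by the Caldero--Chapoton recipe applied to grassmannians of submodules. The key geometric input is that the embedding $\iota$ is full, exact and extension-closed, so that for any $kQ$-module $M$ and any dimension vector $\textbf e \in \N^{Q_0}$, every submodule of $\iota(M)$ is itself the image under $\iota$ of a submodule of $M$; in particular any submodule $N \subset \iota(M)$ has dimension vector supported on the first copy $Q_0 \subset \widehat Q_0$, i.e. its $Q_0'$-components vanish. First I would use this to show that $\chi(\Gr_{\widehat{\textbf e}}(\iota(M)))$ vanishes unless $\widehat{\textbf e}=(\textbf e,0)$ for some $\textbf e \in \N^{Q_0}$, and that in that case $\Gr_{(\textbf e,0)}(\iota(M)) \cong \Gr_{\textbf e}(M)$ as varieties, hence the two Euler characteristics agree. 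This collapses the sum defining $X^{\widehat Q}_{\iota(M)}$ to a sum indexed only by $\textbf e \in \N^{Q_0}$, matching the index set of the sum defining $\XQy_M$.

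Next I would match the exponents. Writing $\widehat{\alpha}_i$ for the simple roots of $k\widehat Q$ and $\langle-,-\rangle_{\widehat Q}$ for its Euler form, I need the identity
\begin{equation*}
-\langle (\textbf e,0), \widehat{\alpha}_i\rangle_{\widehat Q} - \langle \widehat{\alpha}_i, \ddim\iota(M)-(\textbf e,0)\rangle_{\widehat Q}
= -\langle \textbf e, \alpha_i\rangle_Q - \langle \alpha_i, \ddim M - \textbf e\rangle_Q
\end{equation*}
for $i \in Q_0$, together with the computation of the exponent attached to the framing vertices $j=\sigma(i) \in Q_0'$. Because $\iota$ is exact, $\ddim\iota(M)=(\ddim M,0)$, so these are all computations in the Euler form of $\widehat Q$ restricted to vectors supported on $Q_0$. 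The crucial point is that, since the arrows of $\widehat Q$ between two vertices of $Q_0$ are exactly those of $Q$, the Euler form $\langle-,-\rangle_{\widehat Q}$ restricted to $\Z^{Q_0}\times\{0\}$ coincides with $\langle-,-\rangle_Q$, which handles the $i \in Q_0$ exponents directly. For the framing vertices, the block structure $B(\widehat Q)=\left[\begin{smallmatrix}B(Q)&I\\-I&0\end{smallmatrix}\right]$ shows that the single arrow $i \to \sigma(i)$ contributes, so I would verify that $-\langle(\textbf e,0),\widehat{\alpha}_{\sigma(i)}\rangle_{\widehat Q}-\langle\widehat{\alpha}_{\sigma(i)},(\ddim M-\textbf e,0)\rangle_{\widehat Q}$ evaluates precisely to $e_i$, the exponent of $y_i$ in \eqref{eq:XMy}. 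Under the map $\pi$, the variable $u_i$ becomes $x_i$ for $i \in Q_0$ and $u_{\sigma(i)}$ becomes $y_i$, so these exponent identities translate the monomials of $X^{\widehat Q}_{\iota(M)}$ exactly into the monomials $\prod_i x_i^{-\langle\textbf e,\alpha_i\rangle-\langle\alpha_i,\ddim M-\textbf e\rangle}y_i^{e_i}$ of $\XQy_M$, finishing the comparison.

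The main obstacle is the exponent bookkeeping for the framing vertices, that is, pinning down that the $\sigma(i)$-exponent reduces to $e_i$ rather than to some more complicated expression involving $\ddim M$ or the $B(Q)$-block. This requires carefully evaluating the Euler form of $\widehat Q$ on vectors $(\textbf e,0)$ and $\widehat{\alpha}_{\sigma(i)}$, using that the only arrows incident to $\sigma(i)$ in $\widehat Q$ are the arrows $v \to \sigma(v)$, so that $\Hom$ and $\Ext^1$ against $\widehat{\alpha}_{\sigma(i)}$ only see the single arrow $i \to \sigma(i)$; once the contribution of this arrow is computed one sees the $\ddim M$-dependence cancels and leaves exactly $e_i$. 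I would verify the $i \in Q_0$ case quickly by the restriction-of-Euler-form remark and then devote the bulk of the argument to this framing computation. Assembling the two reductions—the grassmannian isomorphism and the exponent identities—then yields $\XQy_M=\pi\bigl(X^{\widehat Q}_{\iota(M)}\bigr)$ directly from Definition \ref{defi:characters}.
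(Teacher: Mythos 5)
Your proposal is correct and follows essentially the same route as the paper's proof: identify $\Gr_{\textbf e}(\iota(M))$ with $\Gr_{\textbf e}(M)$ (subrepresentations of $\iota(M)$ vanish at the framing vertices), note that the Euler form of $\widehat Q$ restricted to $\Z^{Q_0}$ coincides with that of $Q$, and compute that the exponent at $\sigma(i)$ collapses to $e_i$ because $\langle\widehat\alpha_{\sigma(i)},-\rangle$ vanishes on $\Z^{Q_0}$ while $-\langle(\textbf e,0),\widehat\alpha_{\sigma(i)}\rangle=e_i$. The only cosmetic quibble is that the vanishing of submodules of $\iota(M)$ at $Q_0'$ is most directly seen from $\iota(M)$ having zero vector spaces there, rather than from $\iota$ being full, exact and extension-closed (properties which by themselves do not give closure under subobjects).
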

	      \begin{proof}
		      Let $M$ be a $kQ$-module which we consider as a representation of $Q$. For any $i \in Q_0$,  we denote by $M(i)$ the corresponding $k$-vector space at vertex $i$ and for any $\alpha:i \fl j \in Q_1$, we denote by $M(\alpha):M(i) \fl M(j)$ the corresponding $k$-linear map. Thus, $\iota(M)$ can be identified with the representation of $\widehat{Q}$ given by $\iota(M)(i)=M(i)$ if $i \in Q_0$, $\iota(M)(i) =0$ if $i \in Q_0'$ and $\iota(M)(\alpha)=M(\alpha)$ if $\alpha \in Q_1$, $\iota(M)(\alpha)=0$ if $\alpha \not \in Q_1$. In particular, $\ddim \iota(M)=\ddim M$. Moreover, $\Gr_{\textbf e}(\iota(M))=\emptyset$ if $\textbf e \not \in \N^{Q_0}$ and $\iota$ induces an isomorphism $\Gr_{\textbf e}(M) \simeq \Gr_{\textbf e}(\iota(M))$ otherwise.
		      
		      Note also that for any $i \in Q_0'$, $j \in Q_0$, we have $\<\alpha_i, \alpha_j\>=0$,
		      $$\<\alpha_j, \alpha_i\>=\left\{\begin{array}{ll}
			      -1 & \textrm{ if }j=\sigma^{-1}(i),\\
			      0 & \textrm{ otherwise,}
		      \end{array}\right.$$
		      and for any $i,j \in Q_0$, the form $\<\alpha_i,\alpha_j\>$ is the same computed in $kQ$-mod and $k\widehat{Q}$-mod.
	      
		      We thus have~:
		      \begin{align*}
			      X^{\widehat{Q}}_M
				      &=\sum_{\textbf e \in \widehat{Q}_0} \chi(\Gr_{\textbf e}(\iota(M))) \prod_{i \in \widehat{Q}_0} u_i^{-\<\textbf e, \alpha_i\>-\<\alpha_i, \ddim \iota(M)-\textbf e\>}\\
				      &=\sum_{\textbf e \in \N^{Q_0}} \chi(\Gr_{\textbf e}(M)) \prod_{i \in Q_0} u_i^{-\<\textbf e, \alpha_i\>-\<\alpha_i, \ddim M-\textbf e\>}\prod_{i \in Q_0'} u_i^{-\<\textbf e, \alpha_i\>-\<\alpha_i, \ddim \iota(M)-\textbf e\>}\\
				      &=\sum_{\textbf e \in \N^{Q_0}} \chi(\Gr_{\textbf e}(M)) \prod_{i \in Q_0} u_i^{-\<\textbf e, \alpha_i\>-\<\alpha_i, \ddim M-\textbf e\>}\prod_{i \in Q_0'} u_i^{-\<\textbf e, \alpha_i\>}\\
				      &=\sum_{\textbf e \in \N^{Q_0}} \chi(\Gr_{\textbf e}(M)) \prod_{i \in Q_0} u_i^{-\<\textbf e, \alpha_i\>-\<\alpha_i, \ddim M-\textbf e\>}\prod_{i \in Q_0'} u_i^{e_{\sigma^{-1}(i)}}\\
		      \end{align*}
		      Applying $\pi$, we thus get
		      \begin{align*}
			      \pi(X^{\widehat{Q}}_M)
				      &=\sum_{\textbf e \in \N^{Q_0}} \chi(\Gr_{\textbf e}(M)) \prod_{i \in Q_0}x_i^{-\<\textbf e, \alpha_i\>-\<\alpha_i, \ddim M-\textbf e\>}\prod_{i \in Q_0'}y_i^{e_i}\\
				      &=\XQy_{M}
		      \end{align*}
		      and the lemma is proved.
	      \end{proof}
	      
	      \begin{rmq}
		      In \cite{FK}, the authors gave a slightly different definition of the cluster characters with coefficients than the one we use here. We now prove that the definition we give in this paper is compatible with their definition. 
		      
		      Let $Q$ be an acyclic quiver, $\widehat{Q}$ the corresponding framed quiver and $\widetilde{Q}=\widehat{Q}^{\op}$. Let mod-$k\widetilde{Q}$ be the category of finite dimensional right modules over $k\widetilde{Q}$ considered in \cite{FK}. This category is equivalent to the category $k\widehat{Q}$-mod of finite dimensional left-modules over the path algebra of $\widehat{Q}$. It thus follows from \cite{FK} that the cluster category $\CC_Q$ is equivalent to the category $^\bot(\Sigma (k\widehat Q/kQ))/(k\widehat Q/kQ)$ where $\Sigma$ denotes the shift functor in $\CC_{\widehat Q}$ and where $^\bot(\Sigma (k\widehat Q/kQ))$ denotes the full subcategory consisting of objects $M$ in $\CC_{\widehat Q}$ such that $\Ext^1_{\CC_{\widehat Q}}(M, P_i)=0$ for any $i \in Q_0'$. Thus objects in $\CC_Q$ can be identified with objects $M$ in $\CC_{\widehat Q}$ such that $\Ext^1_{\CC_{\widehat Q}}(M, P_i)=0$ for any $i \in Q_0'$ and such that $M \not \simeq P_i$ for any $i \in Q_0'$.
		      
		      Given an object $M$ in $\CC_Q$, the cluster character $X'_M \in \Z[\textbf y, \textbf x^{\pm 1}]$ associated to $M$ by Fu and Keller is defined as follows. Using the above equivalence of categories, $M$ is viewed as an object in $^\bot(\Sigma (k\widehat Q/kQ))/(k\widehat Q/kQ)$ and the character $X'_M$ is $\pi(X^{k\widehat Q}_{\Sigma M})$ where $X^{k\widehat Q}_?:\Ob(\CC_{\widehat Q}) \fl \Z[\textbf u^{\pm 1}]$ is the cluster character on $\CC_{\widehat Q}$ associated by Palu to the cluster-tilting object $k\widehat Q$ in $\CC_{\widehat{Q}}$ (see \cite{Palu} for details). 
		      
		      Fix thus an indecomposable object $M$ in $^\bot(\Sigma (k\widehat Q/kQ))/(k\widehat Q/kQ)$. If $M$ is not a projective $k\widehat Q$-module, then $\Sigma M$ is a $k\widehat Q$-module and
		      $$0=\Hom_{\CC_{\widehat Q}}(P_i, \Sigma M)=\Hom_{k{\widehat Q}}(P_i, \Sigma M)=\ddim M(i)$$
		      for any $i \in Q_0'$ so that $\Sigma M$ can be viewed as a representation of $Q$. In particular, there is some $kQ$-module $M_0$ such that $\Sigma(M)=\iota(M_0)$. Thus, we get equalities
		      $$X'_M=\pi(X^{k\widehat Q}_{\Sigma M})=\pi(X^{\widehat Q}_{\Sigma M})=\XQy_{\iota(M_0)}$$
		      where the second equality follows from \cite[Section 5]{Palu} and the last equality follows from Lemma \ref{lem:addingcoeffs}.
		      If $M$ is a projective module $P_j$ for some $j \in Q_0$, then
		      $$X'_M=\pi(X^{k\widehat Q}_{\Sigma P_j})=\pi(u_j)=x_j=\XQy_{P_j[1]}.$$
		      
		      Conversely, for any object $M$ in $kQ$-mod, $\iota(M)$ is an object in $k\widehat Q$-mod such that 
		      $$0=\Hom_{k{\widehat Q}}(P_i, \iota(M))=\Hom_{\CC_{\widehat Q}}(P_i, \iota(M))\textrm{ for any }i \in Q_0'$$
		      so that $\Sigma^{-1}\iota(M)$ belongs to $^\bot(\Sigma (k\widehat Q/kQ))/(k\widehat Q/kQ)$. Thus,
		      $$\XQy_{M}=\pi(X^{\widehat Q}_{\iota(M)})=\pi(X^{k\widehat Q}_{\iota(M)})=X'_{\Sigma^{-1}\iota(M)}$$
		      where the first equality follows from Lemma \ref{lem:addingcoeffs} and the second follows from \cite[Section 5]{Palu}. Thus, cluster characters with coefficients we defined coincide with those previously introduced by Fu and Keller. In particular, the cluster variables in $\mathcal A(Q,\textbf x, \textbf y)$ are precisely the characters $\XQy_M$ when $M$ runs over the indecomposable rigid objects in $\CC_Q$ \cite{FK}.
	      \end{rmq}
	\end{subsection}
\end{section}

\begin{section}{Characters with coefficients in Dynkin type $\A$}\label{section:typeA}
	Let $r \geq 1$ be an integer and $A$ denote the quiver of type $\Alin{r}$, that is, of Dynkin type $\A_r$ equipped with the following orientation~:
	$$\xymatrix{
		      0 & \ar[l] 1 & \ar[l] 2 & \ar[l] \cdots & \ar[l] r-1.
	}$$
	For any $i \in [0, r-1]$, $n \in [1, r-i]$, we denote by $S_i^{(n)}$ the unique (up to isomorphism) indecomposable $kA$-module with socle $S_i$ and length $n$. By convention, for any $i \in [0, r-1]$, $S_i^{(0)}$ denotes the zero module. For simplicity, we denote by $i+r$ the vertex $\sigma(i) \in A_0'$ for any $i \in [0,r-1]$.

	The following lemma is analogous to \cite[Lemma 4.2.1]{mathese}~:
	\begin{lem}\label{lem:multpsinA}
	      For any $i \in [0,r-2]$ and $n \in [1,r-1-i]$, the following holds~:
	      $$\XAy_{S_i^{(n)}}\XAy_{S_{i+1}^{(n)}}=\XAy_{S_i^{(n+1)}}\XAy_{S_{i+1}^{(n-1)}}+\textbf y^{\ddim S_{i+1}^{(n)}}.$$
	\end{lem}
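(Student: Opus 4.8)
The plan is to apply the almost split sequence formula of Proposition \ref{prop:multps} to the indecomposable module $N = S_{i+1}^{(n)}$, whose Auslander--Reiten translate turns out to be $S_i^{(n)}$. First I would fix the standard description of the indecomposable $kA$-modules: since $A$ is the equioriented quiver $0 \leftarrow 1 \leftarrow \cdots \leftarrow r-1$, the indecomposables are exactly the interval representations, and $S_i^{(n)}$ is the one supported on the vertices $i, i+1, \ldots, i+n-1$ with identity maps along the arrows internal to this support. In particular its socle is $S_i$, its dimension vector is $\ddim S_i^{(n)} = \alpha_i + \alpha_{i+1} + \cdots + \alpha_{i+n-1}$, and the projective modules are precisely those supported on an interval containing the sink $0$; hence $S_{i+1}^{(n)}$ is indecomposable and non-projective, since its socle is $S_{i+1}$ with $i+1 \geq 1$.

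The key geometric input is the shape of the almost split sequence ending at $S_{i+1}^{(n)}$. Knitting the Auslander--Reiten quiver of the linear orientation of $\A_r$ (or invoking the computation underlying the coefficient-free analogue \cite[Lemma 4.2.1]{mathese}), one obtains $\tau S_{i+1}^{(n)} \simeq S_i^{(n)}$ together with the almost split sequence
$$0 \fl S_i^{(n)} \fl S_i^{(n+1)} \oplus S_{i+1}^{(n-1)} \fl S_{i+1}^{(n)} \fl 0,$$
with the convention $S_{i+1}^{(0)} = 0$ when $n = 1$. I would confirm this by checking that the dimension vectors balance, namely $\ddim S_i^{(n)} + \ddim S_{i+1}^{(n)} = \ddim S_i^{(n+1)} + \ddim S_{i+1}^{(n-1)}$, and that the maps are the evident inclusions and projections of interval modules, so that the sequence is indeed the mesh ending at $S_{i+1}^{(n)}$.

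With this in hand the conclusion is immediate. Applying Proposition \ref{prop:multps} to $N = S_{i+1}^{(n)}$, with $M = \tau N = S_i^{(n)}$ and middle term $B = S_i^{(n+1)} \oplus S_{i+1}^{(n-1)}$, yields
$$\XAy_{S_i^{(n)}}\XAy_{S_{i+1}^{(n)}} = \XAy_{S_i^{(n+1)} \oplus S_{i+1}^{(n-1)}} + \textbf y^{\ddim S_{i+1}^{(n)}},$$
and the multiplicativity property (part c of Definition \ref{defi:characters}) lets me factor $\XAy_{S_i^{(n+1)} \oplus S_{i+1}^{(n-1)}} = \XAy_{S_i^{(n+1)}}\XAy_{S_{i+1}^{(n-1)}}$, which is exactly the claimed identity. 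In the boundary case $n = 1$ one uses $\XAy_{S_{i+1}^{(0)}} = \XAy_0 = 1$, and the formula still reads off correctly.

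I expect the only genuine obstacle to be pinning down the middle term of the almost split sequence; once that is secured, the statement follows by a direct substitution into Proposition \ref{prop:multps}. The hypotheses $i \in [0,r-2]$ and $n \in [1,r-1-i]$ are precisely what ensure that all four modules $S_i^{(n)}, S_{i+1}^{(n)}, S_i^{(n+1)}, S_{i+1}^{(n-1)}$ have admissible quasi-lengths, so that no term of the sequence leaves the module category.
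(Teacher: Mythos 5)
Your proof is correct and follows the same route as the paper: identify the almost split sequence $0 \to S_i^{(n)} \to S_i^{(n+1)} \oplus S_{i+1}^{(n-1)} \to S_{i+1}^{(n)} \to 0$ and apply Proposition \ref{prop:multps} together with the multiplicativity of the character on direct sums. The extra verifications you include (non-projectivity of $S_{i+1}^{(n)}$, the interval-module description, and the boundary case $n=1$) are all accurate and simply make explicit what the paper leaves implicit.
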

	\begin{proof}
	      For any $i \in [0,r-2]$ and $n \in [1,r-1-i]$, there is an almost split sequence
	      $$0 \fl S_i^{(n)} \fl S_i^{(n+1)} \oplus S_{i+1}^{(n-1)} \fl S_{i+1}^{(n)} \fl 0.$$
	      The lemma is thus a direct consequence of Proposition \ref{prop:multps}.
	\end{proof}

	We now prove a relation analogous to three terms recurrence relations in the context of orthogonal polynomials. This relation will be essential in order to extract quantized Chebyshev polynomials.
	\begin{lem}\label{lem:recpolyinA}
	      For any $i \in [0,r-2]$ and $n \in [1,r-1-i]$, we have
	      $$\XAy_{S_i^{(n)}}\XAy_{S_{i+n}}=\XAy_{S_i^{(n+1)}}+y_{i+n}\XAy_{S_{i}^{(n-1)}}.$$
	\end{lem}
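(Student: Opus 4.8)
The plan is to argue by induction on the quasi-length $n$, using the multiplication formula of Lemma~\ref{lem:multpsinA} as the only input together with the inductive hypothesis; the key device is to clear denominators rather than to divide, so that every manipulation stays inside the integral domain $\Z[\textbf y][\textbf x^{\pm 1}]$. The base case $n=1$ is immediate: the claim then reads
$$\XAy_{S_i}\XAy_{S_{i+1}}=\XAy_{S_i^{(2)}}+y_{i+1}\XAy_{S_i^{(0)}},$$
and since $S_i^{(0)}=0$ gives $\XAy_{S_i^{(0)}}=1$ while $\textbf y^{\ddim S_{i+1}}=y_{i+1}$, this is exactly the case $n=1$ of Lemma~\ref{lem:multpsinA} (equivalently, it is Proposition~\ref{prop:multps} applied to the almost split sequence $0\fl S_i\fl S_i^{(2)}\fl S_{i+1}\fl 0$).

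For the inductive step I would assume the identity at quasi-length $n-1$ for every admissible socle and fix $i$ with $n\in[2,r-1-i]$; this constraint forces $i\leq r-3$, hence $i+1\leq r-2$, so that both Lemma~\ref{lem:multpsinA} at $(i,n-1)$ and the induction hypothesis at socle $i+1$ are available. Instead of dividing, I multiply the desired equality by the character $\XAy_{S_{i+1}^{(n-1)}}$ and prove
$$\XAy_{S_i^{(n)}}\XAy_{S_{i+n}}\XAy_{S_{i+1}^{(n-1)}}=\XAy_{S_i^{(n+1)}}\XAy_{S_{i+1}^{(n-1)}}+y_{i+n}\XAy_{S_i^{(n-1)}}\XAy_{S_{i+1}^{(n-1)}}.$$
On the right I rewrite $\XAy_{S_i^{(n+1)}}\XAy_{S_{i+1}^{(n-1)}}$ using Lemma~\ref{lem:multpsinA} at $(i,n)$ and $\XAy_{S_i^{(n-1)}}\XAy_{S_{i+1}^{(n-1)}}$ using Lemma~\ref{lem:multpsinA} at $(i,n-1)$. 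The two resulting coefficient monomials are $\prod_{k=1}^n y_{i+k}$ and $y_{i+n}\prod_{k=1}^{n-1}y_{i+k}$, which are equal and cancel, leaving $\XAy_{S_i^{(n)}}(\XAy_{S_{i+1}^{(n)}}+y_{i+n}\XAy_{S_{i+1}^{(n-2)}})$. By the induction hypothesis at socle $i+1$ and quasi-length $n-1$ (here $(i+1)+(n-1)=i+n$), the parenthesised factor equals $\XAy_{S_{i+1}^{(n-1)}}\XAy_{S_{i+n}}$, so the right-hand side becomes $\XAy_{S_i^{(n)}}\XAy_{S_{i+1}^{(n-1)}}\XAy_{S_{i+n}}$, which is the left-hand side. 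As $\XAy_{S_{i+1}^{(n-1)}}$ is a nonzero element of the domain $\Z[\textbf y][\textbf x^{\pm 1}]$, cancelling it gives the lemma.

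The step I expect to require the most care is the bookkeeping in the inductive computation: one must keep every index range admissible so that each invocation of Lemma~\ref{lem:multpsinA} and of the induction hypothesis is legitimate (in particular handling the degenerate conventions $\XAy_{S_j^{(0)}}=1$ and $S_j^{(0)}=0$ that occur when $n=2$), and one must check that the two $\textbf y$-monomials produced by the two applications of Lemma~\ref{lem:multpsinA} cancel exactly. The multiply-then-cancel device is precisely what allows the argument to rely only on Lemma~\ref{lem:multpsinA} and avoid appealing to a general Caldero--Keller multiplication formula.
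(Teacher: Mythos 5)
Your proof is correct, but it takes a genuinely different route from the paper. The paper proves this lemma by working in the cluster category $\CC_{\widehat A}$ of the framed quiver: it checks that $\Ext^1_{\CC_{\widehat A}}(\iota(S_{i+n}),\iota(S_i^{(n)}))$ is one-dimensional, invokes the Caldero--Keller one-dimensional multiplication formula from \cite{CK2}, and then explicitly computes $\tau_{\widehat A}(\iota(S_{i+n-1}))$, the kernel and cokernel of a nonzero morphism (identifying the cokernel with an injective, hence a shifted projective contributing the factor $u_{i+n+r}$), before descending via the homomorphism $\pi$ of Lemma~\ref{lem:addingcoeffs}. You instead derive the three-term relation purely formally from the almost-split-sequence formula of Lemma~\ref{lem:multpsinA} by induction on $n$, using the multiply-by-$\XAy_{S_{i+1}^{(n-1)}}$-then-cancel device; I checked the index bookkeeping (in particular that $n\geq 2$ together with $n\leq r-1-i$ forces $i+1\leq r-2$, so both invocations of Lemma~\ref{lem:multpsinA} at $(i,n)$ and $(i,n-1)$ and the inductive hypothesis at socle $i+1$ are legitimate) and the cancellation of the two monomials $\prod_{k=1}^{n}y_{i+k}$ and $y_{i+n}\prod_{k=1}^{n-1}y_{i+k}$, and everything goes through; the final cancellation of $\XAy_{S_{i+1}^{(n-1)}}$ is valid since this is a nonzero element of the domain $\Z[\textbf y][\textbf x^{\pm 1}]$. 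What your approach buys is elementarity and self-containedness: it needs only Proposition~\ref{prop:multps} and no input from \cite{CK2}, no computation of Auslander--Reiten translates or projective resolutions in $k\widehat A$-mod, and no identification of kernels and cokernels. What the paper's approach buys is a direct, non-inductive derivation that exhibits the coefficient $y_{i+n}$ as coming from a concrete shifted projective summand, a mechanism that is the prototype for similar multiplication formulas in settings where one does not have a full ladder of almost split sequences to induct along. Note also that your argument is essentially the same manipulation the paper later performs algebraically on the abstract variables $x_{i,n}$ in the proof of Proposition~\ref{prop:Pnpoly}, so your proof can be seen as running that computation directly at the level of characters.
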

	\begin{proof}
	      Let $i \in [0,r-2]$ and $n \in [1,r-1-i]$. 
	      We consider the indecomposable $k\widehat{A}$-modules $\iota(S_i^{(n)})$ and $\iota(S_{i+n})$ in the cluster category $\mathcal C_{\widehat{A}}$. We thus have isomorphisms of vector spaces (see \cite{BMRRT})~:
	      \begin{align*}
		      \Ext^1_{\mathcal C_{\widehat{A}}}(\iota(S_{i+n}), \iota(S_i^{(n)})) 
			      &\simeq \Ext^1_{k{\widehat{A}}}(\iota(S_{i+n}), \iota(S_i^{(n)})) \oplus \Ext^1_{k{\widehat{A}}}(\iota(S_i^{(n)}), \iota(S_{i+n})) \\
			      &\simeq \Ext^1_{kA}(S_{i+n}, S_i^{(n)}) \oplus \Ext^1_{kA}(S_i^{(n)}, S_{i+n}) \\
			      &\simeq \Ext^1_{kA}(S_{i+n}, S_i^{(n)})\\
			      &\simeq \Hom_{kA}(S_i^{(n)}, S_{i+n-1})\\
			      &\simeq k
	      \end{align*}
	      So we can apply Caldero-Keller's one-dimensional multiplication formula for cluster characters without coefficients \cite{CK2} to $\iota(S_{i+n})$ and $\iota(S_i^{(n)})$ in $\mathcal C_{\widehat{A}}$. We get~:
	      $$X^{\widehat{A}}_{\iota(S_{i+n})}X^{\widehat{A}}_{\iota(S_i^{(n)})}=X^{\widehat{A}}_{\iota(S_i^{(n+1)})}+X^{\widehat{A}}_B$$
	      where $B=\ker \hat f \oplus \coker \hat f[-1]_{\widehat{A}}$ for any $0 \neq \hat f \in \Hom_{k\widehat{A}}(\iota(S_i^{(n)}), \tau_{\widehat{A}}(\iota(S_{i+n-1}))) \simeq k$.
	      
	      We now have to compute $\Hom_{k\widehat{A}}(\iota(S_i^{(n)}), \tau_{\widehat{A}}(\iota(S_{i+n-1})))$. For this, we first compute $\tau_{\widehat{A}}(\iota(S_{i+n-1}))$ taking care of the fact that $\iota$ does not commute with the Auslander-Reiten translation.
	      
	      In order to fix notations, we draw $\widehat{A}$ as follows~:
	      $$\xymatrix{
	      r & r+1  & \cdots  &  2r-2 & 2r-1\\
	      0 \ar[u] & 1 \ar[l] \ar[u] & \cdots \ar[l] & r-2 \ar[l] \ar[u] & r-1 \ar[l] \ar[u]
	      }$$
	      
	      We compute that a projective resolution of $S_{i+n-1}$ is given by
	      $$P_{i+n-1} \oplus P_{i+n+r} \xrightarrow{f} P_{i+n} \fl S_{i+n} \fl 0.$$
	      Applying the Nakayama functor $\nu$ we get
	      $$I_{i+n-1} \oplus I_{i+n+r} \xrightarrow{\nu(f)} I_{i+n}$$
	      where $\nu(f)$ is surjective since $I_{i+n-1} \fl I_{i+n}$ is onto. It follows from \cite{Gabriel:AR} that $\tau_{k\widehat{A}}(\iota(S_{i+n})) \simeq \ker \nu(f)$ and thus $\ker \nu(f)$ is the representation given by
	      $$\xymatrix{
	      0 & \cdots & 0 & 0 & k  & 0& \cdots & 0\\
	      0 \ar[u] & \cdots \ar[l] & 0 \ar[l] \ar[u] & k \ar[l] \ar[u] & k \ar[l] \ar[u]  & k \ar[l] \ar[u] & \cdots \ar[l]  & k \ar[l] \ar[u] \\
	      & & & i+n-1 \ar@{..}[u]
	      }$$
	      where the arrows are obviously zero or identity maps.
	      
	      Since $\iota(S_i^{(n)})$ is the representation given by 
	      $$\xymatrix{
	      0 & \cdots & 0 & 0 & \cdots & 0 & 0 & \cdots & 0\\
	      0 \ar[u] & \cdots \ar[l] & 0 \ar[u] \ar[l] & k \ar[l] \ar[u] & \cdots \ar[l] & k \ar[l] \ar[u] & 0 \ar[l] \ar[u]  & \cdots \ar[l] & 0 \ar[l] \ar[u] \\
	      & & & i \ar@{..}[u] & & i+n-1 \ar@{..}[u]
	      }$$
	      we get that for any non-zero morphism $\hat f$, the kernel $\ker \hat f$ is given by 
	      $$\xymatrix{
	      0 & \cdots & 0 & 0 & \cdots & 0 & 0 & \cdots & 0\\
	      0 \ar[u] & \cdots \ar[l] & 0 \ar[u] \ar[l] & k \ar[l] \ar[u] & \cdots \ar[l] & k \ar[l] \ar[u] & 0 \ar[l] \ar[u]  & \cdots \ar[l] & 0 \ar[l] \ar[u] \\
	      & & & i \ar@{..}[u] & & i+n-2 \ar@{..}[u]
	      }$$
	      which is isomorphic to $\iota(S_i^{(n-1)})$ and
	      $\coker \hat f$ is
	      $$\xymatrix{
	      0 & \cdots & 0 & k & 0 & \cdots & 0 & \cdots & 0\\
	      0 \ar[u] & \cdots \ar[l] & 0 \ar[u] \ar[l] & k \ar[l] \ar[u] & k \ar[l] \ar[u] & \cdots \ar[l] & k \ar[l] \ar[u]  & \cdots \ar[l] & k \ar[l] \ar[u] \\
	      & & & i+n \ar@{..}[u] & & 
	      }$$
	      which is isomorphic to the injective $k\widehat{A}$-module $I_{i+n+r}$. It thus follows that 
	      $$B \simeq \iota(S_i^{(n-1)}) \oplus P_{i+n+r}[1]$$
	      and
	      $$X^{\widehat{A}}_B=X^{\widehat{A}}_{\iota(S_i^{(n-1)})}u_{i+n+r}.$$
	      Thus, $$X^{\widehat{A}}_{\iota(S_{i+n})}X^{\widehat{A}}_{\iota(S_i^{(n)})}=X^{\widehat{A}}_{\iota(S_i^{(n+1)})}+u_{i+n+r}X^{\widehat{A}}_{\iota(S_i^{(n-1)})}.$$
	      Applying the homomorphism $\pi$ of Lemma \ref{lem:addingcoeffs} to this identity, we get 
	      $$\XAy_{S_{i+n}}\XAy_{S_i^{(n)}}=\XAy_{S_i^{(n+1)}}+y_{i+n}\XAy_{S_i^{(n-1)}}$$
	      and the lemma is proved.
	\end{proof}

	\begin{lem}\label{lem:simplealgind}
	      Let $A$ be a quiver of type $\Alin{r}$ with $r$ even. Then the set
	      $$\ens{\XAy_{S_i} |i \in [0,r-1]}$$
	      is algebraically independent over $\Z[\textbf y]$.
	\end{lem}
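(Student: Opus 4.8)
We must show that for $A$ of type $\Alin{r}$ with $r$ even, the $r$ simple characters $\XAy_{S_0}, \ldots, \XAy_{S_{r-1}}$ are algebraically independent over $\Z[\textbf y]$.

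**Strategy.** The cleanest approach is to exhibit an explicit closed form for each $\XAy_{S_i}$ as a Laurent polynomial in $\textbf x$ (with coefficients in $\Z[\textbf y]$) and then read off algebraic independence from the leading monomials. The plan is to compute $\XAy_{S_i}$ directly from the definition (\ref{eq:XMy}). For the equioriented type $\A$ quiver, the simple module $S_i$ is rigid with a very small submodule grassmannian: its only submodules are $0$ and $S_i$ itself, so the generating series in (\ref{eq:XMy}) has exactly two terms, indexed by $\textbf e = 0$ and $\textbf e = \alpha_i$. Thus I would first write
\begin{equation*}
\XAy_{S_i} = \prod_{k}x_k^{-\<\alpha_i,\alpha_k\>} + y_i\prod_{k}x_k^{-\<\alpha_k,\alpha_i\>},
\end{equation*}
and then compute the two exponent vectors using the explicit Euler form of the linearly oriented $\A_r$ quiver, where $\<\alpha_i,\alpha_j\> = \delta_{ij} - \delta_{i,j+1}$ (the arrows point $j \to j-1$, so there is an arrow from $i$ to $i-1$). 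This yields a concrete two-term Laurent binomial for each $i$; I expect something of the shape $x_{i-1}x_i^{-1}$ up to the coefficient-free boundary adjustments at $i=0$ and $i=r-1$, plus the $y_i$-weighted second monomial.

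**Independence via a Jacobian / valuation argument.** Once I have the explicit binomials, algebraic independence over $\Z[\textbf y]$ can be established by a leading-term argument. The idea is to choose a monomial order (or a $\Z^{Q_0}$-valuation) on $\Z[\textbf y][\textbf x^{\pm 1}]$ so that each $\XAy_{S_i}$ has a distinguished leading Laurent monomial $m_i$, and to check that the $r$ exponent vectors of the $m_i$ are linearly independent over $\Q$. Because each $m_i$ is (up to the $y_i$-coefficient) a monomial of the form $x_{i-1}x_i^{-1}$ type, the corresponding exponent matrix is essentially bidiagonal, hence invertible; linear independence of the log-leading terms forces algebraic independence of the $\XAy_{S_i}$ themselves over the ground ring $\Z[\textbf y]$. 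Alternatively one can phrase this as a Jacobian computation: the Jacobian determinant of the map $\textbf x \mapsto (\XAy_{S_i})_i$ with respect to the $x_k$ is a nonzero Laurent monomial times a unit, which certifies algebraic independence over the field $\Q(\textbf y)$, and hence over $\Z[\textbf y]$.

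**Where the parity enters, and the main obstacle.** The hypothesis that $r$ is \emph{even} is the subtle point, and I expect it to be the crux of the argument rather than the formal independence mechanics. For the linearly oriented quiver the exponent matrix of the leading terms is unimodular regardless of parity, so naively one might expect independence for all $r$; the evenness must compensate for a degeneration in the two boundary simples $S_0$ and $S_{r-1}$ (whose grassmannian binomials are truncated because the projective/injective at the ends of the $\A_r$ chain behaves specially). Concretely, I anticipate that the product of all $\XAy_{S_i}$, or some alternating product $\prod_i \XAy_{S_i}^{(-1)^i}$, telescopes and that the parity controls whether a spurious polynomial relation among the binomials can occur. The hard part will therefore be verifying that the $r\times r$ exponent matrix of the chosen leading monomials has full rank precisely when $r$ is even — I would handle this by writing the matrix explicitly and computing its determinant, expecting it to equal $\pm(1 \pm (-1)^r)$ or a similar parity-sensitive quantity that vanishes exactly for odd $r$.
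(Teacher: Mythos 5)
Your overall strategy --- write each $\XAy_{S_i}$ explicitly as the two-term Laurent binomial $(x_{i+1}+y_i x_{i-1})/x_i$ (with the convention $x_{-1}=x_r=1$; note that in your displayed formula the factor $y_i$ sits on the wrong term, it belongs to the $\textbf e=\alpha_i$ summand $\prod_k x_k^{-\<\alpha_i,\alpha_k\>}$) and then run a leading-monomial argument --- is essentially the paper's strategy, and the formal mechanism you describe (distinct leading monomials for distinct exponent tuples force all coefficients of a putative relation to vanish) is sound. But there is a genuine gap exactly at the point you flag as the crux, and your guess about where the parity of $r$ enters is wrong. You never construct the monomial order, and the choices of leading monomial for the various $i$ are coupled: a single linear form $\epsilon$ on $\Z^{Q_0}$ must decide, for all $i$ simultaneously, which of the two monomials of $\XAy_{S_i}$ dominates. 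The two exponent vectors of $\XAy_{S_i}$ differ by $B\alpha_i$, where $B$ is the skew-symmetric incidence matrix of $A$, so making the $\textbf e=0$ term $x_{i+1}x_i^{-1}$ leading for every $i$ requires $\epsilon(B\alpha_i)<0$ for all $i$. Such an $\epsilon$ exists because $B$ has full rank, and a skew-symmetric integer matrix of odd size is always singular: for $\Alin{r}$ with $r$ odd the kernel of $B$ is spanned by the nonnegative vector $(1,0,1,0,\dots,1)$, so $\sum_{i\ \mathrm{even}}\epsilon(B\alpha_i)=0$ and the uniform sign condition is unachievable. This, and not the rank of the exponent matrix of the leading monomials, is where evenness is used; as you yourself observe, that exponent matrix is bidiagonal and unimodular for \emph{every} $r$, so your anticipated determinant of the form $\pm(1\pm(-1)^r)$ cannot be the explanation, and your speculations about boundary degenerations and telescoping alternating products do not lead anywhere.

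For comparison, the paper's proof (following Caldero--Keller) chooses such an $\epsilon$, observes that in the associated graded algebra $\mathrm{gr}(\XAy_M)=\mathrm{gr}\prod_i x_i^{-\<S_i,M\>}$ for every module $M$, identifies the monomial $\prod_i(\XAy_{S_i})^{\nu_i}$ with $\XAy_{M_\nu}$ for the semisimple module $M_\nu=\bigoplus_i S_i^{\oplus\nu_i}$ of dimension vector $\nu$, and uses the injectivity of $\nu\mapsto\left(\<S_i,M_\nu\>\right)_i$ (the Euler matrix is unimodular --- this is your bidiagonal observation) to conclude that distinct $\nu$ give distinct leading monomials, whence any polynomial relation is trivial. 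Your Jacobian alternative would in principle also certify independence over $\Q(\textbf y)$ and hence over $\Z[\textbf y]$, but the Jacobian of $\textbf x\mapsto(\XAy_{S_i})_i$ is a genuinely tridiagonal matrix, not ``a nonzero Laurent monomial times a unit,'' and you would still have to prove that its determinant is nonzero; that computation is not carried out and is not easier than producing $\epsilon$.
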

	\begin{proof}
	      Denote by $B$ the incidence matrix of $A$. As $r$ is even, $B$ is of full rank and thus there exists a $\Z$-linear form $\epsilon$ on $\Z^{Q_0}$ such that $\epsilon(B\alpha_i)<0$ for every $i \in [0,r-1]$. It thus follows from \cite{CK1} that 
	      $$F_n=\left(\bigoplus_{\epsilon(\nu) \leq n} \Z\prod_{i \in Q_0}x_i^{\nu_i}\right) \cap \Z[X^{A}_{S_i}|i \in [0,r-1]]$$
	      defines a filtration on $\Z[X^{A}_{S_i}|i \in [0,r-1]$ and in the associated graded algebra, we have
	      $$\textrm{gr}(X^A_{M})=\textrm{gr}\prod_{i=0}^{r-1}x_i^{-\<S_i,M\>}$$
	      for every $kA$-module $M$. We now consider the grading on $\Z[\XAy_{S_i}|i \in [0,r-1]]$ given the grading on $\Z[X^A_{S_i}|i \in [0,r-1]]$ and $\deg(y_i)=0$ for every $i \in Q_0$. We thus have that 
	      $$\textrm{gr}(\XAy_{M})=\textrm{gr}\prod_{i=0}^{r-1}x_i^{-\<S_i,M\>}$$
	      and thus, since $\left(\<S_i,M\>\right)_{i=0\ldots r-1} \neq \left(\<S_i,N\>\right)_{i=0\ldots r-1}$ if $\ddim M \neq \ddim N$, it follows that any finite set $\ens{\XAy_{M_i}|i \in J}$ with $\ddim M_i \neq \ddim M_j$ is linearly independent over $\Z[\textbf y]$. 
	      
	      Now assume that there is a polynomial $P(\textbf t)=\sum_{\nu \in \N^{[0,r-1]}} a_\nu t_0^{\nu_0}\cdots t_{r-1}^{\nu_{r-1}}$ such that 
	      $$P(\XAy_{S_0}, \ldots, \XAy_{S_{r-1}})=0.$$
	      
	      Since $(\XAy_{S_0})^{\nu_0} \cdots (\XAy_{S_{r-1}})^{\nu_{r-1}}=\XAy_{\bigoplus_{i=0}^{r-1}S_i^{\oplus \nu_i}}$, and $\ddim \left(\bigoplus_{i=0}^{r-1}S_i^{\oplus \nu_i} \right)=\nu$, we get a vanishing $\Z[\textbf y]$-linear combination of $\XAy_{M_\nu}$ where $\nu$ runs over a finite subset of $\N^{[0,r-1]}$. Since $\ddim M_\nu=\nu$, it follows from the above discussion that each of the $a_\nu$ is zero and thus the set $\ens{\XAy_{S_i} |i \in [0,r-1]}$ is algebraically independent over $\Z[\textbf y]$.
	\end{proof}
\end{section}

\begin{section}{Quantized Chebyshev polynomials}\label{section:quantized}
	\begin{subsection}{Quantized Chebyshev polynomials of infinite rank}
	      Let $\textbf q=\ens{q_i | i \in \Z}$ be a family of indeterminates over $\Z$ and $\ens{x_{i,1} | i \in \Z}$ be a family of indeterminates over $\Z[\textbf q]$. We define by induction a family $$\ens{x_{i,n} |i \in \Z, n \geq 1} \subset \Q(\textbf q)(x_{i,1} |i \in \Z)$$
	      by
	      $$x_{i,n}x_{i+1,n}=x_{i,n+1}x_{i+1,n-1}+\prod_{k=1}^n q_{i+k}$$
	      with the convention that $x_{i,0}=1$ for all $i \in \Z$. 
	      For simplicity if $I=[i,j] \subset \Z$ is an interval, we write 
	      $$\textbf x_{I}=(x_{i,1}, \ldots, x_{j,1})
	      \textrm{ and }
	      \textbf q_{I}=(q_{i}, \ldots, q_{j}).$$
	      It follows directly from the definition that for every $n$, there exists a rational function $P_{n}$ such that
	      $$x_{i,n}=P_{n}(\textbf q_{[i,i+n-1]}, \textbf x_{[i,i+n-1]}).$$
	      
	      \begin{prop}\label{prop:Pnpoly}
		      For every $i \in \Z$ and $n \geq 1$, $P_{n}$ is the polynomial given by
		      $$P_{n}(\textbf q_{[i,i+n-1]}, \textbf x_{[i,i+n-1]})=\det \left[\begin{array}{ccccccc}
		      x_{i+n-1,1} & 1 &&& (0)\\
		      q_{i+n-1} & \ddots & \ddots \\
		      & \ddots & \ddots & \ddots \\
		      & & \ddots & \ddots & 1 \\
		      (0)& & & q_{i+1} & x_{i,1}
	      \end{array}\right].$$
	      \end{prop}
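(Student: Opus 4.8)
The plan is to prove the determinantal formula by induction on $n$, using the defining recurrence $x_{i,n}x_{i+1,n}=x_{i,n+1}x_{i+1,n-1}+\prod_{k=1}^n q_{i+k}$ to derive a cleaner \emph{three-term} recurrence that matches the structure of a tridiagonal determinant. The defining relation is quadratic in the $x$'s and mixes two different quasi-socle indices $i$ and $i+1$, so it is not directly an induction on $n$ with fixed $i$. The first task is therefore to extract from it a linear three-term recurrence of the form
\begin{equation}\label{eq:threeterm-plan}
x_{i,n+1}=x_{i,n}\,x_{i+n,1}-q_{i+n}\,x_{i,n-1},
\end{equation}
which is exactly what one needs to expand the proposed tridiagonal determinant along its first row or column.

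Deriving \eqref{eq:threeterm-plan} is the heart of the argument and is what I expect to be the main obstacle. The cleanest route is to appeal to the representation-theoretic incarnation already established in Section \ref{section:typeA}: Lemma \ref{lem:recpolyinA} gives precisely the relation $\XAy_{S_i^{(n)}}\XAy_{S_{i+n}}=\XAy_{S_i^{(n+1)}}+y_{i+n}\XAy_{S_i^{(n-1)}}$, which is \eqref{eq:threeterm-plan} after the substitution $x_{i,n}\leftrightarrow\XAy_{S_i^{(n)}}$, $x_{i+n,1}\leftrightarrow\XAy_{S_{i+n}}$ and $q_{i+n}\leftrightarrow y_{i+n}$. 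To transfer this identity back to the abstract indeterminates $x_{i,n}$ I would invoke the algebraic independence furnished by Lemma \ref{lem:simplealgind}: choosing $r$ even and large enough that all indices $i,\dots,i+n$ lie in $[0,r-1]$, the characters $\XAy_{S_j}$ are algebraically independent over $\Z[\textbf{y}]$, so both $\{x_{j,1}\}$ and $\{\XAy_{S_j}\}$ freely generate isomorphic polynomial rings and the two families of $x_{i,n}$ satisfy the \emph{same} defining recurrence \eqref{eq:recidentity}. Hence the purely formal identity \eqref{eq:threeterm-plan} holds, since it holds for the character specialization and the specialization map is an isomorphism of the relevant rings. (Alternatively one could verify \eqref{eq:threeterm-plan} by a direct formal manipulation of \eqref{eq:recidentity}, but routing it through the already-proved Lemma \ref{lem:recpolyinA} avoids a delicate elimination.)

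With \eqref{eq:threeterm-plan} in hand, the determinantal formula follows by a standard induction. Denote the claimed $n\times n$ tridiagonal determinant by $D_{i,n}$, with subdiagonal entries $q_{i+n-1},\dots,q_{i+1}$, superdiagonal entries all $1$, and diagonal entries $x_{i+n-1,1},\dots,x_{i,1}$. Expanding $D_{i,n}$ along its first row gives $D_{i,n}=x_{i+n-1,1}\,D_{i,n-1}-q_{i+n-1}\,D_{i,n-2}$, which is exactly the recurrence \eqref{eq:threeterm-plan} with the index shift taken into account (here $D_{i,n-1}$ is the minor on rows/columns $2,\dots,n$ and $D_{i,n-2}$ the one on rows/columns $3,\dots,n$). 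The base cases $D_{i,0}=1=x_{i,0}$ and $D_{i,1}=x_{i,1}$ match the conventions, so by induction $D_{i,n}=x_{i,n}=P_n(\textbf{q}_{[i,i+n-1]},\textbf{x}_{[i,i+n-1]})$. Finally, since every entry of the determinant is a polynomial in the stated variables, $P_n$ is a genuine polynomial in $\Z[q_{i+1},\dots,q_{i+n-1},x_{i,1},\dots,x_{i+n-1,1}]$, establishing the last assertion.
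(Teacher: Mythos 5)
Your proposal is correct and follows essentially the same route as the paper: the paper likewise uses Lemma \ref{lem:simplealgind} to set up an isomorphism $\phi$ identifying $\XAy_{S_j}$ with $x_{j,1}$ and $y_j$ with $q_j$ for $r$ even and large, checks via Lemma \ref{lem:multpsinA} that $\phi(\XAy_{S_j^{(k)}})=x_{j,k}$ since both families satisfy the defining recurrence (\ref{eq:recidentity}), and then transports the three-term relation of Lemma \ref{lem:recpolyinA} to obtain $x_{i,n}x_{i+n,1}=x_{i,n+1}+q_{i+n}x_{i,n-1}$, concluding by the standard tridiagonal determinant expansion. No gaps.
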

	      \begin{proof}
		      Let $i \in \Z$ and $n \geq 1$, fix an even integer $r > i+n$. Let $A$ still denote the quiver of section \ref{section:typeA} and denote by $\XAy_?$ the associated cluster character with coefficients. Consider the homomorphism of $\Z$-algebras~:
		      $$\phi:\left\{\begin{array}{rcll}
			      \Z[\XAy_{S_i} |i \in [0,r-1]] & \fl & \Z[\textbf q][\textbf x_{[i,i+n-1]}]\\
			      y_i & \mapsto & q_i & \textrm{ for all } i \in [0,r-1]\\
			      \XAy_{S_i} & \mapsto & x_{i,1} & \textrm{ for all } i \in [0,r-1]\\
		      \end{array}\right.$$
		      By Lemma \ref{lem:simplealgind}, $\phi$ is an isomorphism.
		      By Lemma \ref{lem:multpsinA}, for any $j \in [i, i+n-1]$ and $k <n-i$ we have
		      $$\XAy_{S_j^{(k)}}\XAy_{S_{j+1}^{(k)}}=\XAy_{S_j^{(k+1)}}\XAy_{S_{j+1}^{(k-1)}}+\textbf y^{\ddim S_{j+1}^{(k)}}$$
		      and $\textbf y^{\ddim S_{j+1}^{(k)}}=\prod_{l=1}^k y_{j+l}$. Since the $x_{j,k}$ for $ 1 \leq k \leq n$ are obtained by $$x_{j,k}x_{j+1,k}=x_{j,k+1}x_{j+1,k-1}+\prod_{l=1}^k q_{j+l}$$
		      an immediate induction proves that 
		      $$\phi(\XAy_{S_j^{(k)}})=x_{j,k}$$
		      for any $j \in [i, i+n-1]$ and $k <n-i$. In particular, by Lemma \ref{lem:recpolyinA} that 
		      $$\XAy_{S_i^{(n)}}\XAy_{S_{i+n}}=\XAy_{S_i^{(n+1)}}+y_{i+n}\XAy_{S_{i}^{(n-1)}}$$
		      and thus applying $\phi$ we get
		      $$x_{i,n}x_{i+n,1}=x_{i,n+1}+q_{i+n}x_{i,n-1}$$
		      and thus by induction 
		      $$x_{i,n}=\det \left[\begin{array}{ccccccc}
			      x_{i+n-1,1} & 1 &&& (0)\\
			      q_{i+n-1} & \ddots & \ddots \\
			      & \ddots & \ddots & \ddots \\
			      & & \ddots & \ddots & 1 \\
			      (0)& & & q_{i+1} & x_{i,1}
		      \end{array}\right]$$ is a polynomial in $\Z[\textbf q_{[i,i+n-1]},\textbf x_{[i,i+n-1]}]$
		      and the proposition is proved.
	      \end{proof}
	      
	      As an immediate corollary, quantized Chebyshev polynomials are characterized by the following three-terms recurrence relation~:
	      \begin{corol}\label{corol:3terms}
			For any $n \geq 2$, the following equality holds~:
			$$P_{n+1}(\textbf q_{[i,i+n]},\textbf x_{[i,i+n]})=x_{i+n}P_n(\textbf q_{[i,i+n-1]},\textbf x_{[i,i+n-1]})-q_{i+n}P_{n-1}(\textbf q_{[i,i+n-2]}, \textbf x_{[i,i+n-2]})$$
	      \end{corol}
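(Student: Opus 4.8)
The plan is to read off the recurrence directly from the tridiagonal determinant supplied by Proposition \ref{prop:Pnpoly}. Write $M_{n}$ for the $n \times n$ matrix whose determinant equals $P_{n}(\textbf q_{[i,i+n-1]}, \textbf x_{[i,i+n-1]})$, so that its diagonal reads $x_{i+n-1,1}, \ldots, x_{i,1}$ from top to bottom, its superdiagonal entries are all $1$, and its subdiagonal entries are $q_{i+n-1}, \ldots, q_{i+1}$. The matrix $M_{n+1}$ computing $P_{n+1}(\textbf q_{[i,i+n]}, \textbf x_{[i,i+n]})$ is obtained from $M_{n}$ by prepending one row and one column, placing $x_{i+n,1}$ in the new top-left corner, $1$ immediately to its right, and $q_{i+n}$ immediately below it.

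First I would expand $\det M_{n+1}$ along its first row, which has only the two nonzero entries $x_{i+n,1}$ in position $(1,1)$ and $1$ in position $(1,2)$. The $(1,1)$-minor is exactly $M_{n}$, contributing $x_{i+n,1}\det M_{n} = x_{i+n,1} P_{n}(\textbf q_{[i,i+n-1]}, \textbf x_{[i,i+n-1]})$. For the $(1,2)$ term I would expand its minor a second time along the (original) first column: because the matrix is tridiagonal, that column has $q_{i+n}$ as its only surviving nonzero entry, and deleting the corresponding row and column leaves precisely $M_{n-1}$. Accounting for the cofactor sign $(-1)^{1+2}$ from the first expansion together with a $(+1)$ from the second, this term equals $-q_{i+n}\det M_{n-1} = -q_{i+n} P_{n-1}(\textbf q_{[i,i+n-2]}, \textbf x_{[i,i+n-2]})$, which yields the claimed identity (with $x_{i+n}$ understood as $x_{i+n,1}$).

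The only thing requiring care is the index bookkeeping, namely verifying that the $(1,1)$-minor and the doubly deleted minor genuinely reproduce $M_{n}$ and $M_{n-1}$ with the correct entry labels, and checking the cofactor sign in the second expansion; there is no real obstacle here, this being the standard continuant recurrence for a tridiagonal determinant. I note moreover that the same identity already drops out of the three-term relation $x_{i,n+1}=x_{i+n,1}\,x_{i,n}-q_{i+n}\,x_{i,n-1}$ established inside the proof of Proposition \ref{prop:Pnpoly}: rewriting $x_{i,m}$ as $P_{m}(\textbf q_{[i,i+m-1]}, \textbf x_{[i,i+m-1]})$ for $m=n-1,n,n+1$ gives the statement verbatim, so either route closes the argument at once.
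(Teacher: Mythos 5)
Your argument is correct and matches the paper's: the corollary is stated there as an immediate consequence of Proposition \ref{prop:Pnpoly}, obtained exactly by the standard continuant expansion of the tridiagonal determinant (equivalently, from the relation $x_{i,n}x_{i+n,1}=x_{i,n+1}+q_{i+n}x_{i,n-1}$ derived inside that proof), which is precisely what you do. Both of your routes are sound, and your sign and index bookkeeping check out.
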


	      \begin{defi}
		      For any $n \geq 1$, $P_{n}$ is called the \emph{$n$-th quantized Chebyshev polynomial of infinite rank}. By convention $P_{0}=1$.
	      \end{defi}
	      
	      \begin{exmp}\label{exmp:Chebinfinite}
		      The first quantized Chebyshev polynomials of infinite rank are~:
		      $$\begin{array}{|r|l|}
			      \hline
			      P_{1}(q_{0},t_0) =& t_0\\
			      \hline
			      P_{2}(\textbf q_{[0,1]},\textbf t_{[0,1]}) =& t_0t_1-q_1\\
			      \hline
			      P_{3}(\textbf q_{[0,2]},\textbf t_{[0,2]}) =& t_0t_1t_2 - q_2 t_0 - q_1t_2\\
			      \hline
			      P_{4}(\textbf q_{[0,3]},\textbf t_{[0,3]}) =&
				      t_0t_1t_2t_3 - q_3t_0t_1 -q_1 t_2 t_3 - q_2 t_0 t_3 + q_1 q_3\\
			      \hline
			      P_{5}(\textbf q_{[0,4]},\textbf t_{[0,4]}) =&   t_0 t_1 t_2 t_3 t_4  - q_1 t_2 t_3 t_4 - q_2 t_0 t_3 t_4 - q_4 t_0 t_1 t_2\\  &- q_3 t_0 t_1 t_4 + q_1 q_4 t_2 + q_2 q_4 t_0 + q_1 q_3 t_4 \\ \hline
		      \end{array}$$
	      \end{exmp}
	      
		We now prove Theorem 3. 
		\begin{corol}
			Let $r \geq 1$ be an integer and $A$ be the quiver of type $\Alin{r}$ equipped with the following orientation
			$$\xymatrix{
				0 & \ar[l] 1 & \ar[l] 2 & \ar[l] \cdots & \ar[l] r-1.
			}$$
			Then, for any $i \in [0,r-1]$ and $n \in [1,r-i]$, we have
			$$
				X^{A,\textbf y}_{S_i^{(n)}}=P_n(y_i, \ldots, y_{i+n-1}, X^{A,\textbf y}_{S_i}, \ldots, X^{A,\textbf y}_{S_{i+n-1}})
			$$
			or equivalently
			$$X^{A,\textbf y}_{S_i^{(n)}}=\det \left[\begin{array}{ccccccc}
			      X^{A,\textbf y}_{S_{i+n-1}} & 1 &&& (0)\\
			      y_{i+n-1} & \ddots & \ddots \\
			      & \ddots & \ddots & \ddots \\
			      & & \ddots & \ddots & 1 \\
			      (0)& & & y_{i+1} & X^{A,\textbf y}_{S_i}
		      \end{array}\right].$$
		\end{corol}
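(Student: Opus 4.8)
The plan is to prove the stated identity by induction on the quasi-length $n$, using the fact that the characters $\XAy_{S_i^{(n)}}$ and the polynomials $P_n$ satisfy the \emph{same} three-term recurrence. On the character side, Lemma \ref{lem:recpolyinA} rearranges to
$$\XAy_{S_i^{(n+1)}}=\XAy_{S_{i+n}}\,\XAy_{S_i^{(n)}}-y_{i+n}\,\XAy_{S_i^{(n-1)}},$$
valid for $i \in [0,r-2]$ and $n \in [1,r-1-i]$, with the convention $\XAy_{S_i^{(0)}}=1$. On the polynomial side, Corollary \ref{corol:3terms} provides the identity $P_{n+1}=x_{i+n}P_n-q_{i+n}P_{n-1}$, with the arguments ranging over the appropriate intervals, holding as an identity of polynomials in the indeterminates $\textbf q,\textbf x$. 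First I would dispose of the base case: since $S_i^{(1)}=S_i$ and $P_1(q_i,x_{i,1})=x_{i,1}$, evaluating $P_1$ at $(y_i,\XAy_{S_i})$ returns $\XAy_{S_i}=\XAy_{S_i^{(1)}}$; the degenerate value $n=0$ is covered by $P_0=1=\XAy_{S_i^{(0)}}$.

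For the inductive step I would fix $i$ and assume the formula for quasi-lengths $n-1$ and $n-2$ (the latter being the trivial $P_0=1$ instance when $n=2$). The key observation is that Corollary \ref{corol:3terms} is a \emph{polynomial} identity, hence survives the substitution $q_j\mapsto y_j$, $x_{j,1}\mapsto\XAy_{S_j}$. Applying this substitution at the index that produces $P_n$, and then feeding in the two induction hypotheses for $P_{n-1}$ and $P_{n-2}$, yields precisely the right-hand side of the rearranged Lemma \ref{lem:recpolyinA}, which is $\XAy_{S_i^{(n)}}$. This closes the induction and establishes the first displayed equality for all $i\in[0,r-1]$ and $n\in[1,r-i]$; the equivalent tridiagonal determinant is then immediate from Proposition \ref{prop:Pnpoly}, since the same substitution turns its determinantal expression for $P_n$ into the one asserted in the statement.

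I would also point out why, unlike the proof of Proposition \ref{prop:Pnpoly}, this argument needs no parity hypothesis on $r$. The algebraic independence of Lemma \ref{lem:simplealgind} (equivalently, the isomorphism $\phi$) was used there only to manufacture the polynomial recurrence for $P_n$ in the abstract ring; once that recurrence is in hand as Corollary \ref{corol:3terms}, the present corollary follows by direct substitution of the characters, with no need to know that $\ens{\XAy_{S_i}}$ is independent. Hence the proof applies uniformly to every $r\geq1$. The main obstacle, such as it is, is purely bookkeeping: one must verify that the index ranges interlock so that every pair $(i,n)$ with $n\in[1,r-i]$ is actually reached, and that the boundary cases $n=1,2$ are correctly supplied by the conventions $\XAy_{S_i^{(0)}}=1$ and $P_0=1$. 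All the genuine representation-theoretic content has already been absorbed into Lemmas \ref{lem:multpsinA} and \ref{lem:recpolyinA}, so once the two recurrences are recognized as identical under $q_j\mapsto y_j,\ x_{j,1}\mapsto\XAy_{S_j}$, nothing further is required.
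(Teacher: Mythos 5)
Your proof is correct and follows essentially the same route as the paper: the paper's proof defines the substitution $q_j\mapsto y_j$, $x_{j,1}\mapsto \XAy_{S_j}$ as a ring epimorphism $\pi$ and deduces $\pi(x_{i,n})=\XAy_{S_i^{(n)}}$ from Lemma \ref{lem:recpolyinA}, which is exactly the induction on the shared three-term recurrence that you spell out explicitly before invoking Proposition \ref{prop:Pnpoly}. Your added observation that no parity hypothesis on $r$ is needed at this stage is accurate and consistent with the paper's argument.
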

		\begin{proof}
			Consider the epimorphism of $\Z$-algebras
			$$\pi:\left\{\begin{array}{rcll}
				\Z[q_i,x_{i,1}| i \in \Z] & \fl & \Z[y_i,\XAy_{S_i} | i \in [0,r-1]] & \\
				x_{i,1} & \mapsto & X_{S_i} & \textrm{ for } i \in [0,r-1]\\
				x_{i,1} & \mapsto & 1 & \textrm{ for } i \not \in [0,r-1]\\
				q_i & \mapsto & y_i &  \textrm{ for } i \in [0,r-1]\\
				q_i & \mapsto & 1 & \textrm{ for } i \not \in [0,r-1]
			\end{array}\right.$$

			By Lemma \ref{lem:recpolyinA}, for any $i \in [0,r-1]$ and $n \in [1,r-1-i]$, we have $$\XAy_{S_i^{(n)}}\XAy_{S_{i+n}}=\XAy_{S_i^{(n+1)}}+y_{i+n}\XAy_{S_{i}^{(n-1)}}$$
			so that $\pi(x_{i,n})=X_{S_i^{(n)}}$ for any $i \in [0,r-1]$ and $n \in [1,r-i]$. The result thus follows from Proposition \ref{prop:Pnpoly}.
		\end{proof}
	\end{subsection}

	\begin{subsection}{Quantized Chebyshev polynomials of finite ranks}
	      Fix now an integer $p \geq 1$ and consider the \emph{$n$-th quantized Chebyshev polynomial of rank $p$} $P_{n,p}$ defined in the introduction.
	      
	      \begin{exmp}
		      The first five quantized Chebyshev polynomials of rank 1 are
		      $$\begin{array}{|r|l|}
			      \hline
			      P_{1,1}(q,t) =& t\\
			      \hline
			      P_{2,1}(q,t) =& t^2-q\\
			      \hline
			      P_{3,1}(q,t) =& t^3 - 2 qt\\
			      \hline
			      P_{4,1}(q,t) =&
				      t^4-3qt^2+q^2\\
			      \hline
			      P_{5,1}(q,t) =&   t^5  - 4 qt^3 + 3q^2t \\ \hline
		      \end{array}$$
		      
		      The first five quantized Chebyshev polynomials of rank 2 are
		      $$\begin{array}{|r|l|}
			      \hline
			      P_{1,2}(q_0,q_1,t_0,t_1) =& t_0\\
			      \hline
			      P_{2,2}(q_0,q_1,t_0,t_1) =& t_0t_1-q_1\\
			      \hline
			      P_{3,2}(q_0,q_1,t_0,t_1) =& t_0^2t_1 - q_0 t_0 - q_1t_0\\
			      \hline
			      P_{4,2}(q_0,q_1,t_0,t_1) =&
				      t_0^2t_1^2 - q_1t_0t_1 - q_1 t_0 t_1 - q_2 t_0 t_1 + q_1^2\\
			      \hline
			      P_{5,2}(q_0,q_1,t_0,t_1) =&  t_0^3 t_1^2 - 2 q_1 t_0^2 t_1 - 2 q_0 t_0^2 t_1 + q_0 q_1 t_0 + q_0^2 t_0 + q_1^2 t_0\\
			      \hline
		      \end{array}$$
		      
		      The first five quantized Chebyshev polynomials of rank 3 are
		      $$\begin{array}{|r|l|}
			      \hline
			      P_{1,3}(\textbf q_{[0,2]}, \textbf t_{[0,2]}) =& t_0\\
			      \hline
			      P_{2,3}(\textbf q_{[0,2]}, \textbf t_{[0,2]}) =& t_0t_1-q_1\\
			      \hline
			      P_{3,3}(\textbf q_{[0,2]}, \textbf t_{[0,2]}) =& t_0t_1t_2 - q_2 t_0 - q_1t_2\\
			      \hline
			      P_{4,3}(\textbf q_{[0,2]}, \textbf t_{[0,2]}) =&
				      t_0^2t_1t_2-q_1t_0t_2-q_0t_0t_1-q_2t_0^2+q_0q_1\\
			      \hline
			      P_{5,3}(\textbf q_{[0,2]},\textbf t_{[0,2]}) =&  
				      t_0^2t_1^2t_2-2q_1t_0t_1t_2-q_2t_0^2t_1+q_1^2t_2-q_0t_0t_1^2+q_1q_2t_0+q_0q_1t_1\\
			      \hline
		      \end{array}$$
		      
		      The first five quantized Chebyshev polynomials of rank 4 are
		      $$\begin{array}{|r|l|}
			      \hline
			      P_{1,4}(\textbf q_{[0,3]}, \textbf t_{[0,3]}) =& t_0\\
			      \hline
			      P_{2,4}(\textbf q_{[0,3]}, \textbf t_{[0,3]}) =& t_0t_1-q_1\\
			      \hline
			      P_{3,4}(\textbf q_{[0,3]}, \textbf t_{[0,3]}) =& t_0t_1t_2 - q_2 t_0 - q_1t_2\\
			      \hline
			      P_{4,4}(\textbf q_{[0,3]}, \textbf t_{[0,3]}) =&
				      t_0t_1t_2t_3 - q_3t_0t_1 -q_1 t_2 t_3 - q_2 t_0 t_3 + q_1 q_3\\
			      \hline
			      P_{5,4}(\textbf q_{[0,3]}, \textbf t_{[0,3]}) =&   t_0^2 t_1 t_2 t_3 - q_1 t_0t_2 t_3 - q_2 t_0^2 t_3 - q_0 t_0 t_1 t_2\\  &- q_3 t_0^2 t_1 + q_0 q_1 t_2 + q_0 q_2 t_0 + q_1 q_3 t_0 \\ \hline
		      \end{array}$$
		      
		      The first five quantized Chebyshev polynomials of rank $p \geq 5$ coincide with the first five quantized Chebyshev polynomials of infinite rank.
	      \end{exmp}

	      Note that quantized Chebyshev polynomials are deformations of generalized Chebyshev polynomials introduced in \cite{Dupont:stabletubes}, more precisely we have the following relation~:
	      \begin{lem}\label{lem:quantizeddeform}
		      For any $p \in \Z_{>0} \sqcup \ens \infty$, the $n$-th generalized Chebyshev polynomial of rank $p$ coincides with the $n$-th quantized Chebyshev polynomial of rank $p$ where all the $q_i$'s are specialized at $1$.
	      \end{lem}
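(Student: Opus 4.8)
The plan is to reduce everything to the determinantal closed form already established in Proposition \ref{prop:Pnpoly}, combined with the recursive characterization of the generalized Chebyshev polynomials from \cite{Dupont:stabletubes}. First I would recall that the $n$-th generalized Chebyshev polynomial of rank $p$ was introduced in \cite{Dupont:stabletubes} precisely to express the coefficient-free cluster character $X^Q_{R_i^{(n)}}$ in terms of the $X^Q_{R_j}$'s: the coefficient-free multiplication formula on almost split sequences, that is, Proposition \ref{prop:multps} specialized at $y_i = 1$, yields the mesh relation
$$X^Q_{R_i^{(n)}} X^Q_{R_{i+1}^{(n)}} = X^Q_{R_i^{(n+1)}} X^Q_{R_{i+1}^{(n-1)}} + 1.$$
Thus the generalized Chebyshev polynomials are exactly the polynomials governing the family $\ens{x_{i,n}}$ defined by the recurrence (\ref{eq:recidentity}) once every factor $\prod_{k=1}^n q_{i+k}$ is replaced by $1$.

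Next I would observe that specializing all $q_i$ to $1$ in (\ref{eq:recidentity}) produces verbatim this recurrence, with the same initial data $x_{i,0}=1$ and $x_{i,1}$ the chosen indeterminates. Since (\ref{eq:recidentity}) determines the family $\ens{x_{i,n}}$ uniquely from $\ens{x_{i,1}}$, the coincidence for $p=\infty$ follows. Equivalently, and this is the route I would actually write out, I would specialize $q_i=1$ directly in the tridiagonal determinant of Proposition \ref{prop:Pnpoly}: the sub-diagonal entries $q_{i+1}, \ldots, q_{i+n-1}$ all become $1$, and one recovers exactly the determinantal expression for the generalized Chebyshev polynomial obtained in \cite{Dupont:stabletubes}. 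This settles the infinite-rank case, and the three-term recurrence of Corollary \ref{corol:3terms} specializes in the same way to the classical recurrence $E_{n+1} = x_{i+n} E_n - E_{n-1}$, giving a second, independent confirmation.

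For finite $p$, I would argue that the specialization $q_i \mapsto 1$ is $p\Z$-invariant, hence descends through the quotient map $\pi_p$ fixed in the introduction. Applying the reduction modulo $p\Z$ to the $q_i=1$ specialization of the determinant produces the rank-$p$ determinant with all sub-diagonal entries equal to $1$, which is by definition the rank-$p$ generalized Chebyshev polynomial. The compatibility of the two operations (setting $q_i=1$ and passing to the $\pi_p$-quotient) is immediate because the specialization sends every $q_i$ to the same constant, so it commutes with the $p\Z$-action.

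The main obstacle is bookkeeping rather than substance: one must verify that the definition of the generalized Chebyshev polynomials adopted in \cite{Dupont:stabletubes} agrees, after the identification $q_i=1$, with the recurrence (\ref{eq:recidentity}), and in particular that the periodicity conventions used there to pass from infinite rank to rank $p$ match the ones fixed via $\pi_p$ here. Once these conventions are aligned, the equality of the two families follows at once from uniqueness of the solution to the recurrence.
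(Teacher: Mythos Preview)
Your proposal is correct, and in fact one of its threads is exactly the paper's argument: the paper simply recalls from \cite{Dupont:stabletubes} that the generalized Chebyshev polynomials are defined by the recurrence $a_{i,n}a_{i+1,n}=a_{i,n+1}a_{i+1,n-1}+1$ (with indices in $\Z/p\Z$), observes that this is precisely (\ref{eq:recidentity}) with every $q_i$ set to $1$, and concludes. Your appeal to the determinantal expression of Proposition \ref{prop:Pnpoly}, to Corollary \ref{corol:3terms}, and to the cluster-character motivation via Proposition \ref{prop:multps} is sound but unnecessary: the lemma is purely a statement about two families defined by recurrences, and once you have matched the recurrences and the initial data, uniqueness does the rest without ever invoking determinants or representation theory. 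Your separate treatment of the finite-rank case via $p\Z$-invariance of the specialization is also correct but again more than the paper bothers with, since the definition in \cite{Dupont:stabletubes} already works directly over $\Z/p\Z$.
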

	      \begin{proof}
		      We recall the construction of generalized Chebyshev polynomials given in \cite{Dupont:stabletubes}. Fix $p \geq 0$ an integer and set $\ens{a_{i,1}|i \in \Z/p\Z}$ a family of indeterminates over $\Z$. Then the $n$-th Chebyshev polynomial of rank $p$ (resp. infinite rank) if $p>0$ (resp. $p=0$) is the expression of $a_{i,n}$ in terms of $\ens{a_{i,1}|i \in \Z/p\Z}$ where the $a_{i,n}$ for $n \geq 1$ are defined inductively by 
		      $$a_{i,n}a_{i+1,n}=a_{i,n+1}a_{i+1,n-1}+1.$$
		      It thus follows from the definition that, specializing all the $q_i$'s at $1$, the $n$-th generalized Chebyshev polynomial of rank $p$ (resp. infinite rank) is the specialization of the $n$-th quantized Chebyshev polynomial of rank $p$ (resp. infinite rank).
	      \end{proof}

	      It is proved in \cite{Dupont:stabletubes} that for every $n \geq 1$, the $n$-th generalized Chebyshev polynomial of rank 1 is the usual $n$-th normalized Chebyshev polynomial of the second kind. It thus follows from Lemma \ref{lem:quantizeddeform} that the $n$-th quantized Chebyshev polynomial of rank 1 specialized at $q_i=1$ for every $i \in \Z$ is the $n$-th normalized Chebyshev polynomial of the second kind. This case being of particular interest in the sequel, we set the following definition~:
	      
	      \begin{defi}\label{defi:nq2d}
		      The $n$-th quantized Chebyshev polynomial of rank 1 is called the $n$-th \emph{quantized Chebyshev polynomial of the second kind}.
	      \end{defi}
	\end{subsection}
\end{section}

\begin{section}{Quantized Chebyshev polynomials and characters for regular modules}\label{section:polyreg}
	As we already saw in Theorem 3, quantized Chebyshev polynomials appear in character formulas with coefficients associated to indecomposable modules over the path algebra of an equioriented quiver of Dynkin type $\A$. In this section, we prove that these polynomials also arise in character formulas with coefficients for indecomposable regular modules over the path algebra of an acyclic quiver of infinite representation type.

	Let $Q$ be an acyclic quiver of infinite representation type, denote by $\mathcal R$ a regular component in the Auslander-Reiten quiver $\Gamma(kQ)$ of $kQ$-mod. Let $p \geq 0$ such that $\mathcal R$ is is of the form $\Z\A_\infty/(p)$. If $Q$ is affine, then $p \geq 1$ and $\mathcal R$ is called a \emph{tube} \cite[Section 3.6]{ringel1099}. If $p=1$, $\mathcal R$ is called \emph{homogeneous} and if $p>1$, $\mathcal R$ is called \emph{exceptional}. If $Q$ is wild, then $p=0$ \cite{ringel:wild}. 

	We denote by $\ens{R_i, i \in \Z/p\Z}$ the set of quasi-simple modules in $\mathcal R$ ordered such that $\tau R_i \simeq R_{i-1}$ for every $i \in \Z/p\Z$. For any $i \in \Z/p \Z$ and $n \geq 1$, we denote by $R_i^{(n)}$ the unique indecomposable $kQ$-module with quasi-length $n$ and quasi-socle $R_i$. By convention, $R_i^{(0)}$ denotes the zero module for every $i \in \Z/p\Z$. With these notations, for any $n \geq 1$ and any $i \in \Z/p\Z$, there is an almost split exact sequence
	\begin{equation}\label{eq:suiteARinT}
	      0 \fl R_i^{(n)} \fl R_i^{(n+1)} \oplus R_{i+1}^{(n-1)} \fl R_{i+1}^{(n)} \fl 0.
	\end{equation}
	Locally, a regular component can be depicted as in Figure \ref{figure:regularcomp}.
	
	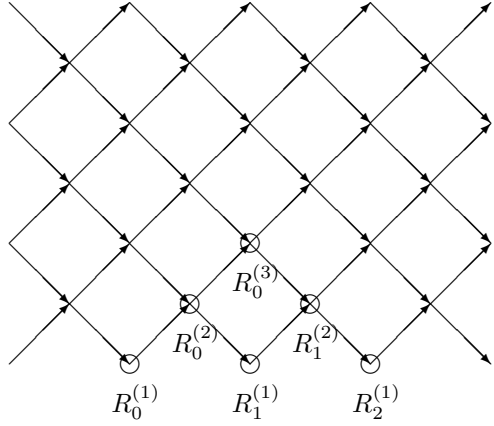
\begin{figure}[htb]
	      \setlength{\unitlength}{.8mm}
	      \begin{picture}(120,80)(-22,-10)
		      \multiput(0,0)(20,0){4}{\multiput(0,0)(0,20){3}{\vector(1,1){10}}}
		      \multiput(10,10)(20,0){4}{\multiput(0,0)(0,20){3}{\vector(1,-1){10}}}
		      \multiput(0,0)(20,0){4}{\multiput(0,20)(0,20){3}{\vector(1,-1){10}}}
		      \multiput(10,10)(20,0){4}{\multiput(0,0)(0,20){3}{\vector(1,1){10}}}
		      
		      \put(20,0){\circle{3}}
		      \put(17,-8){$R_0^{(1)}$}
		      
		      \put(30,10){\circle{3}}
		      \put(27,2){$R_0^{(2)}$}
		      
		      \put(40,20){\circle{3}}
		      \put(37,12){$R_0^{(3)}$}
		      
		      \put(40,0){\circle{3}}
		      \put(37,-8){$R_1^{(1)}$}
		      
		      \put(50,10){\circle{3}}
		      \put(47,2){$R_1^{(2)}$}
		      
		      \put(60,0){\circle{3}}
		      \put(57,-8){$R_2^{(1)}$}
		      
	      \end{picture}
	      \caption{Local configuration in a regular component of $\Gamma(kQ)$}\label{figure:regularcomp}
	\end{figure}

	We are now able to prove Theorem 2.
	\begin{theorem}\label{theorem:polycomposantesreg}
	      Let $Q$ be a quiver of infinite representation type, $\XQy_?$ be the cluster character with coefficients on $kQ$-mod. Let $\mathcal R$ be a regular component in $\Gamma(kQ)$ of the form $\Z\A_\infty/(p)$ for some $p \geq 0$. Let  $\ens{R_i|i \in \Z/p\Z}$ denote the quasi-simples of $\mathcal R$ ordered such that $\tau R_i \simeq R_{i-1}$ for every $i \in \Z/p\Z$. Then, for every $i \in \Z/p\Z$ and any $n \geq 1$, we have;
	      $$\XQy_{R_i^{(n)}}=P_{n}(\textbf y^{\ddim R_i}, \ldots, \textbf y^{\ddim R_{i+n-1}}, \XQy_{R_i}, \ldots, \XQy_{R_{i+n-1}}).$$
	      Moreover, if $p \geq 1$, then
	      $$\XQy_{R_i^{(n)}}=P_{n,p}(\textbf y^{\ddim R_i}, \ldots, \textbf y^{\ddim R_{i+p-1}}, \XQy_{R_i}, \ldots, \XQy_{R_{i+p-1}}).$$
	\end{theorem}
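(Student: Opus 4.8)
The goal is to prove a polynomiality formula for the cluster characters $\XQy_{R_i^{(n)}}$ associated to indecomposable regular modules, identifying them with quantized Chebyshev polynomials. The strategy is to reduce the statement for regular components to exactly the same inductive combinatorics already established in the Dynkin type $\A$ case, since both situations are governed by the almost split sequences (\ref{eq:suiteARinT}). The key observation is that the defining recursion (\ref{eq:recidentity}) for the $x_{i,n}$, which underlies the polynomial $P_n$ via Proposition \ref{prop:Pnpoly}, is structurally identical to the multiplicativity relation coming from almost split sequences via Proposition \ref{prop:multps}.

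\textbf{Main steps.}
First I would apply Proposition \ref{prop:multps} to the almost split sequence (\ref{eq:suiteARinT}). For each $n \geq 1$ and $i \in \Z/p\Z$, taking $M=R_i^{(n)}$, $N=R_{i+1}^{(n)}$, and $B=R_i^{(n+1)} \oplus R_{i+1}^{(n-1)}$, the multiplication formula gives
$$\XQy_{R_i^{(n)}}\XQy_{R_{i+1}^{(n)}}=\XQy_{R_i^{(n+1)}}\XQy_{R_{i+1}^{(n-1)}}+\textbf y^{\ddim R_{i+1}^{(n)}}.$$
The second step is to verify that $\textbf y^{\ddim R_{i+1}^{(n)}}=\prod_{k=1}^{n}\textbf y^{\ddim R_{i+k}}$, which follows from the fact that the quasi-composition factors of $R_{i+1}^{(n)}$ are exactly $R_{i+1}, R_{i+2}, \ldots, R_{i+n}$, so that $\ddim R_{i+1}^{(n)}=\sum_{k=1}^{n}\ddim R_{i+k}$ and dimension vectors add under $\textbf y^{(-)}$. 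This matches precisely the right-hand side of (\ref{eq:recidentity}) under the substitution $q_{i+k} \leftrightarrow \textbf y^{\ddim R_{i+k}}$.

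\textbf{The inductive identification.}
With these two identities in hand, I would set $x_{i,1}=\XQy_{R_i}$ and $q_i=\textbf y^{\ddim R_i}$, and argue by induction on the quasi-length $n$ that $\XQy_{R_i^{(n)}}$ satisfies the very same recursion that defines $x_{i,n}$. Since Proposition \ref{prop:Pnpoly} already expresses $x_{i,n}$ as the tridiagonal determinant $P_n$, the substitution homomorphism sending $q_i \mapsto \textbf y^{\ddim R_i}$ and $x_{i,1}\mapsto \XQy_{R_i}$ carries the identity $x_{i,n}=P_n(\textbf q_{[i,i+n-1]},\textbf x_{[i,i+n-1]})$ into the claimed formula for $\XQy_{R_i^{(n)}}$; this gives both the infinite-rank statement and, after passing to $\Z/p\Z$-indexing and invoking the definition of $P_{n,p}$ via $\pi_p$, the finite-rank statement when $p \geq 1$. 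The base cases $n=0$ (zero module, value $1$) and $n=1$ are immediate from the conventions and definitions.

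\textbf{Anticipated obstacle.}
The main subtlety, as opposed to the Dynkin case, is that here the indices $i$ live in $\Z/p\Z$ when $p \geq 1$, so the recursion and the determinant must be interpreted cyclically; I must check that the reduction of the infinite-rank formula through the quotient map $\pi_p$ genuinely produces $P_{n,p}$ rather than an unrelated expression. The cleanest route is to establish the infinite-rank identity first in a ``universal'' setting where the quasi-simples are treated as formally distinct (unwinding the cyclic indexing), and only afterwards specialize via $\pi_p$ to recover the rank-$p$ polynomial, exactly as $P_{n,p}$ was defined in the introduction. A secondary point requiring care is the combinatorial claim $\ddim R_{i+1}^{(n)}=\sum_{k=1}^n \ddim R_{i+k}$, which relies on the structure of the regular component $\Z\A_\infty/(p)$ and the fact that the quasi-composition series of $R_{i+1}^{(n)}$ has the asserted quasi-simple factors; this is where I would appeal to the description of regular components recalled from \cite{ARS} and \cite{ringel1099}.
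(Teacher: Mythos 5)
Your proposal is correct and follows essentially the same route as the paper: the paper likewise defines the substitution homomorphism $q_i \mapsto \textbf y^{\ddim R_i}$, $x_{i,1}\mapsto \XQy_{R_i}$ (with indices read in $\Z/p\Z$), shows by induction via Proposition \ref{prop:multps} applied to the almost split sequences (\ref{eq:suiteARinT}) that it carries $x_{i,n}$ to $\XQy_{R_i^{(n)}}$, and then invokes Proposition \ref{prop:Pnpoly} and the definition of $P_{n,p}$ for the rank-$p$ statement. Your explicit verification that $\ddim R_{i+1}^{(n)}=\sum_{k=1}^{n}\ddim R_{i+k}$ is a point the paper leaves implicit, so including it is a small improvement rather than a deviation.
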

	\begin{proof}
	      We consider the $\Z$-families $\textbf q=\ens{q_i|i \in \Z}$ and $\ens{x_{i,1}|i \in \Z}$. Define $\phi$ to be the homomorphism of $\Z$-algebras 
	      $$\phi: \left\{\begin{array}{rcl}
		      \Z[q_i|i \in \Z][x_{i,1}|i \in \Z] & \fl & \Z[\textbf y][\textbf x^{\pm 1}]\\
		      q_i & \fl & \textbf y^{\ddim R_{i}}\\
		      x_{i,1} & \fl & \XQy_{R_{i}}\\
	      \end{array}\right.$$
	      where the indices on the right hand side are taken in $\Z/p\Z$.
	      We claim that $\phi(x_{i,n})=\XQy_{R_{i}^{(n)}}$ for every $i$ and $n \geq 0$. If $n=0,1$, the result holds by definition of $\phi$. Now, assume that $n \geq 1$, it follows from the almost split exact sequence (\ref{eq:suiteARinT}) and Proposition \ref{prop:multps} that for any $i \in \Z/p\Z$ we have
	      $$\XQy_{R_i^{(n)}} \XQy_{R_{i+1}^{(n)}} = \XQy_{R_{i}^{(n+1)}} \XQy_{R_{i+1}^{(n-1)}}+\textbf y^{\ddim R_{i+1}^{(n)}}$$
	      so that applying $\phi$ we get 
	      $$\phi(x_{i,n+1})=\XQy_{R_{i}^{(n+1)}}$$
	      for every $i \in \Z$. This proves the claim. Thus for every $i \in \Z/p\Z$ and every $n \geq 1$, we have
	      \begin{align*}
		      \XQy_{R_i^{(n)}} 
			      &=\phi(x_{i,n})\\
			      &=\phi(P_{n}(q_i, \ldots, q_{i+p-1}, x_{i,1}, \ldots, x_{i+n-1}))\\
			      &=P_{n}(\textbf y^{\ddim R_i}, \ldots, \textbf y^{\ddim R_{i+n-1}}, \XQy_{R_i}, \ldots, \XQy_{R_{i+n-1}})
	      \end{align*}
	      which proves the first assertion. For the second one, assume that $p \geq 1$, then $X_{R_{i+p}}=X_{R_{i}}$ and $\ddim R_{i+p}=\ddim R_i$ for every $i \in \Z$. It thus follows from the definition of $P_{n,p}$ that 
	      $$\XQy_{R_i^{(n)}}=P_{n,p}(\textbf y^{\ddim R_i}, \ldots, \textbf y^{\ddim R_{i+p-1}}, \XQy_{R_i}, \ldots, \XQy_{R_{i+p-1}})$$
	      and the theorem is proved.
	\end{proof}

	By expanding with respect to the first column in the determinantal expression of $P_n$, we prove the following immediate corollary~:
	\begin{corol}
	      With notations of Theorem \ref{theorem:polycomposantesreg} we have
	      $$\XQy_{R_i^{(n)}}\XQy_{R_{i+n}}=\XQy_{R_i^{(n+1)}}+\textbf y^{\ddim R_{i+n}}\XQy_{R_{i}^{(n-1)}}.$$
	      for any $n \geq 1$ and $i \in \Z/p\Z$.
	\end{corol}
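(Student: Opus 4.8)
The plan is to read the claimed identity directly off the determinantal formula for $\XQy_{R_i^{(n+1)}}$ furnished by Theorem \ref{theorem:polycomposantesreg}, via a single Laplace expansion. Write $D_{n+1}$ for the $(n+1)\times(n+1)$ tridiagonal determinant whose successive diagonal entries are $\XQy_{R_{i+n}}, \XQy_{R_{i+n-1}}, \ldots, \XQy_{R_i}$, whose superdiagonal is constant equal to $1$, and whose subdiagonal entries are $\textbf y^{\ddim R_{i+n}}, \ldots, \textbf y^{\ddim R_{i+1}}$, so that $\XQy_{R_i^{(n+1)}}=D_{n+1}$. First I would expand $D_{n+1}$ along its first column, whose only nonzero entries are $\XQy_{R_{i+n}}$ in position $(1,1)$ and $\textbf y^{\ddim R_{i+n}}$ in position $(2,1)$.

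The $(1,1)$-minor is exactly the $n\times n$ tridiagonal determinant appearing in Theorem \ref{theorem:polycomposantesreg} for $\XQy_{R_i^{(n)}}$ (its top-left entry being $\XQy_{R_{i+n-1}}$ and its bottom-right entry $\XQy_{R_i}$), so the $(1,1)$-term contributes $\XQy_{R_{i+n}}\XQy_{R_i^{(n)}}$. For the $(2,1)$-entry, the associated minor has as its first row the vector $(1,0,\ldots,0)$, namely the image of the superdiagonal $1$ originally sitting in position $(1,2)$; a secondary expansion along that row collapses it to the $(n-1)\times(n-1)$ tridiagonal determinant computing $\XQy_{R_i^{(n-1)}}$. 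Taking into account the checkerboard sign $(-1)^{2+1}=-1$, this entry contributes $-\,\textbf y^{\ddim R_{i+n}}\XQy_{R_i^{(n-1)}}$. Collecting the two contributions gives $\XQy_{R_i^{(n+1)}}=\XQy_{R_{i+n}}\XQy_{R_i^{(n)}}-\textbf y^{\ddim R_{i+n}}\XQy_{R_i^{(n-1)}}$, which is exactly the asserted relation after moving terms across the equality.

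Equivalently, I could bypass the matrix manipulation entirely and simply apply the substitution homomorphism $\phi$ from the proof of Theorem \ref{theorem:polycomposantesreg} to the three-term recurrence of Corollary \ref{corol:3terms}, since $\phi$ sends $P_m\mapsto\XQy_{R_i^{(m)}}$, $x_{i+n}\mapsto\XQy_{R_{i+n}}$ and $q_{i+n}\mapsto\textbf y^{\ddim R_{i+n}}$. There is essentially no obstacle here: the statement is genuinely immediate, and the only points demanding care are the bookkeeping of the cofactor sign and the boundary case $n=1$, where one must invoke the conventions $R_i^{(0)}=0$ and $\XQy_{R_i^{(0)}}=1$ to verify that the final term reduces correctly to $\textbf y^{\ddim R_{i+1}}$.
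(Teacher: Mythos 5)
Your proof is correct and is exactly the paper's argument: the paper derives this corollary ``by expanding with respect to the first column in the determinantal expression of $P_n$,'' which is precisely your Laplace expansion (with the cofactor sign and the $n=1$ boundary convention handled as you indicate). Your alternative route via $\phi$ and Corollary \ref{corol:3terms} is also valid but unnecessary.
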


	\begin{exmp}\label{exmp:A21}
	      We consider the following quiver of type $\Aaffine_{2,1}$~:
	      $$\xymatrix{
			      && 2\ar[rd]\\
		      Q: & 1 \ar[rr]\ar[ru] && 3
	      }$$
	      Cluster algebras with coefficients associated to this quiver are extensively studied in \cite{Cerulli:A21}. This example actually motivated the introduction of quantized Chebyshev polynomials.

	      $Q$ is an affine quiver. All but one regular components of $\Gamma(kQ)$ are homogeneous tubes, we denote by $\mathcal T_0$ the unique exceptional tube in $\Gamma(kQ)$. The quasi-simple modules in $\mathcal T_0$ are 
	      $$\xymatrix{
		      && k\ar[rd]^0\\
		      R_0 \simeq S_2: & 0 \ar[rr]\ar[ru] && 0
	      }$$ 
	      and 
	      $$\xymatrix{
		      && 0\ar[rd]\\
		      R_1: & k \ar[rr]^1 \ar[ru]^0 && k
	      }$$
	      A direct computation shows that 
	      $$w(\textbf y)=\XQy_{R_0}=\frac{x_1+y_2x_3}{x_2} \textrm{ and } z(\textbf y)=\XQy_{R_1}=\frac{x_1x_2+y_3+y_1y_2y_3x_3}{x_1x_3},$$
	      $$w=X^Q_{R_0}=\frac{x_1+x_3}{x_2} \textrm{ and } z=X^Q_{R_1}=\frac{x_1x_2+1+x_3}{x_1x_3}.$$
	      
	      For any $\lambda \in \P^1(k)\setminus \ens {0}$, we denote by $M_\lambda$ the unique quasi-simple module in $\mathcal T_\lambda$. It is given by
	      $$\xymatrix{
		      && k\ar[rd]^1\\
		      M_\lambda : & k \ar[rr]^1 \ar[ru]^\lambda && k
	      }$$
	      if $\lambda \neq \infty$ and 
	      $$\xymatrix{
		      && k\ar[rd]^1\\
		      M_\infty : & k \ar[rr]^0 \ar[ru]^1 && k
	      }$$
	      A direct check proves that $\XQy_{M_\lambda}$ and $X^Q_{M_\lambda}$ do not depend on the choice of $\lambda \in \P^1(k)\setminus \ens 0$. We set
	      $$u(\textbf y)=\XQy_{M_\lambda}=\frac{x_1^2x_2+y_3x_1+y_2y_3x_3 + y_1 y_2 y_3 x_2 x_3^2}{x_1x_2x_3},$$
	      $$u=X^Q_{M_\lambda}=\frac{x_1^2x_2+x_1+x_3+x_2x_3^2}{x_1x_2x_3}$$
	      for any $\lambda \in \P^1(k) \setminus \ens 0$.
	      
	      A direct computation proves that 
	      $$\XQy_{R_0^{(2)}}=\frac{x_1^2x_2 + y_3x_1 + y_2x_1x_2x_3 + y_2y_3x_3 + y_1y_2y_3x_2x_3^2}{x_1x_2x_3}$$
	      and we easily check that
	      \begin{align*}
		      \XQy_{R_0^{(2)}}
			&=\XQy_{R_0}\XQy_{R_1}-y_1y_3\\
			&=P_{2,2}(\textbf y^{\ddim R_0}, \textbf y^{\ddim R_1}, \XQy_{R_0}, \XQy_{R_1}) \\
			&=P_{2,2}(y_2, y_1y_3, w(\textbf y), z(\textbf y)).
	      \end{align*}
	      
	      Similarly, a direct computation of the cluster character proves that
	      \begin{align*}
		      \XQy_{R_0^{(3)}}
			      &=\frac{1}{x_1x_2^2x_3}\left(x_1^3 x_2 + y_2 x_1^2 x_2 x_3 + y_3 x_1^2 + 2 y_2 y_3 x_1x_3 + \right.\\
			      &+\left. y_1 y_2 y_3 x_1 x_2 x_3^2 + y_2^2 y_3 x_3^2 +y_1 y_2^2 y_3 x_2 x_3^3 \right)
	      \end{align*}
	      and one verifies that
	      \begin{align*}
		      \XQy_{R_0^{(3)}} 
			      &=\XQy_{R_1}(\XQy_{R_0})^2+y_1y_3\XQy_{R_0}+y_2\XQy_{R_0}\\
			      &=\XQy_{R_1} (\XQy_{R_0})^2+\textbf y^{\ddim R_1}\XQy_{R_0}+\textbf y^{\ddim R_2}\XQy_{R_0}\\
			      &=P_{3,2}(\textbf y^{\ddim R_0}, \textbf y^{\ddim R_1}, \XQy_{R_0}, \XQy_{R_1})\\
			      &=P_{3,2}(y_2, y_1y_3, w(\textbf y), z(\textbf y)).
	      \end{align*}
	      
	      In homogeneous tubes, we can also compute the cluster character
	      \begin{align*}
		      \XQy_{M_\lambda^{(2)}}
			      &=\frac{1}{x_1^2x_2^2x_3^2}\left(x_1^4 x_2^2 + 2 y_3 x_1^3 x_2 + y_3^2 x_1^2 + 2 y_2 y_3 x_1^2 x_2 x_3 + 2 y_2 y_3^2 x_3 x_1 \right.\\ \\
			      &+\left. y_2^2 y_3^2 x_3^2 + y_1 y_2 y_3 x_1^2 x_2^2 x_3^2 + 2 y_1 y_2^2 y_3^2 x_2 x_3^3 + 2 y_1 y_2 y_3^2 x_1 x_2 x_3^2 + y_1^2 y_2^2 y_3^2 x_2^2 x_3^4 \right)\\
			      &=(\XQy_{M_\lambda})^2-\textbf y^{\ddim M_\lambda}\\
			      &=(\XQy_{M_\lambda})^2-\textbf y^{\delta}\\
			      &=P_{2,1}(\textbf y^{\delta}, \XQy_{M_\lambda})
	      \end{align*}
	      illustrating Theorem \ref{theorem:polycomposantesreg}.
	\end{exmp}
\end{section}

\begin{section}{Quantized Chebyshev polynomials of the first and second kinds}\label{section:firstandsecond}
	Normalized Chebyshev polynomials of the first kind initially appeared in the context of cluster algebras in \cite{shermanz}. They were introduced in order to study canonically positive bases in rank two cluster algebras. These polynomials are defined by three terms recurrence relations. Let $x$ be an indeterminate over $\Z$, then $F_n$ is the polynomial in one variable defined by 
	$$F_0(x)=1, \quad F_1(x)=x, \quad F_2(x)=x^2-2 \textrm{ and }$$
	$$F_{n+1}(x)=F_n(x)F_1(x)-F_{n-1}(x), \textrm{ for }n \geq 2.$$
	It is easy to check that these polynomials are characterized by
	$$F_n(x+x^{-1})=x^n+x^{-n}$$
	for every $n \geq 1$.

	Normalized Chebyshev polynomials of the second kind appeared in \cite{CZ} in order to study bases in the cluster algebra associated to the Kronecker quiver. These polynomials are defined be the following three terms recurrence relation. Let $x$ be an indeterminate over $\Z$, then $S_n$ is the polynomial in one variable defined by 
	$$S_0(x)=1, \quad S_1(x)=x, \quad S_2(x)=x^2-1 \textrm{ and }$$
	$$S_{n+1}(x)=S_n(x)S_1(x)-S_{n-1}(x), \textrm{ for }n \geq 2.$$
	It is easy to check that these polynomials are characterized by
	$$S_n(x+x^{-1})=\sum_{k=0}^n x^{n-2k}$$
	for every $n \geq 1$.

	In particular, it appears that normalized Chebyshev polynomials of the first and second kind satisfy the same recurrence relations but second terms differ. We now prove that there is a similar phenomenon for quantized Chebyshev polynomials. We define the quantized Chebyshev polynomials of the first kind as follows. Let $\nu$ be an indeterminate over $\Z$, $q=\nu^2$ and $x$ be an indeterminate over $\Z[q]$. 

	\begin{defi}\label{defi:firstkind}
	      The \emph{$n$-th quantized Chebyshev polynomial of the first kind} is the polynomial $F_n^q(x) \in \Z[q][x]$ defined by
	      $$F_0^q(x)=1,\quad F_1^q(x)=x,\quad F_2^q(x)=x^2-2q \textrm{ and }$$
	      $$F_{n+1}^q(x)=F_n^q(x)F_1^q(x)-qF_{n-1}^q(x) \textrm{ for }n \geq 2.$$
	\end{defi}
	\begin{exmp}
	      The first five quantized Chebyshev polynomials of the first kind are given by
	      $$\begin{array}{|r|l|}
		      \hline
		      F_1^q(x) =& x\\
		      \hline
		      F_2^q(x) =& x^2-2q\\
		      \hline
		      F_3^q(x) =& x^3 - 3 qx\\
		      \hline
		      F_4^q(x) =& x^4-4qx^2+2q^2\\
		      \hline
		      F_5^q(x) =& x^5-5qx^3+5q^2x \\ \hline
	      \end{array}$$	
	\end{exmp}

	From now on, we will denote by $S_n^q(x)=P_{n,1}(q,x)$ the $n$-th quantized Chebyshev polynomial of the second kind introduced in definition \ref{defi:nq2d}. It follows from Corollary \ref{corol:3terms} that these polynomials are characterized by 
	$$S_0^q(x)=1,\quad S_1^q(x)=x,\quad S_2^q(x)=x^2-q \textrm{ and }$$
	$$S_{n+1}^q(x)=S_n^q(x)S_1^q(x)-qS_{n-1}^q(x) \textrm{ for }n \geq 2.$$
	Thus, as in the non-quantized case, first kind and second kind quantized Chebyshev polynomials satisfy the same induction relations but second terms differ. 

	As in the non-quantized case, we now give algebraic characterizations of quantized Chebyshev polynomials of first and second kinds.
	\begin{lem}\label{lem:caracalgqCheb}
	      Let $t$ be an indeterminate over $\Z[q]$, then for any $n \geq 1$, we have
	      \begin{enumerate}
		      \item $$F_n^q({\nu}(t+t^{-1}))={\nu}^n(t^n+t^{-n}),$$
		      \item $$S_n^q({\nu}(t+t^{-1}))={\nu}^n\sum_{k=0}^n t^{n-2k}.$$
	      \end{enumerate}
	\end{lem}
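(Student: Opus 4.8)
The plan is to prove both identities by induction on $n$, using the three-term recurrence relations that characterize $F_n^q$ and $S_n^q$. The key observation is that both polynomials satisfy the \emph{same} recurrence $\Pi_{n+1}(x)=x\,\Pi_n(x)-q\,\Pi_{n-1}(x)$ for $n\geq 2$, with $q=\nu^2$; they differ only in the second term. So the strategy is to substitute $x=\nu(t+t^{-1})$ into this recurrence and verify that the proposed right-hand sides satisfy the same relation, after which only the base cases need to be checked separately.

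First I would treat statement (1). Set $G_n=\nu^n(t^n+t^{-n})$, which is the claimed value of $F_n^q(\nu(t+t^{-1}))$. The base cases are immediate: $G_0=2$ matches $F_0^q=1$—wait, here I must be careful, since $F_0^q(x)=1$ gives $1$ while $\nu^0(t^0+t^0)=2$, so the identity is only asserted for $n\geq 1$; for $n=1$ we have $F_1^q(\nu(t+t^{-1}))=\nu(t+t^{-1})=\nu^1(t+t^{-1})$, and for $n=2$ we compute $F_2^q(\nu(t+t^{-1}))=(\nu(t+t^{-1}))^2-2q=\nu^2(t^2+2+t^{-2})-2\nu^2=\nu^2(t^2+t^{-2})$, as desired. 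For the inductive step with $n\geq 2$, I substitute into the recurrence and use the induction hypotheses $F_n^q(\nu(t+t^{-1}))=\nu^n(t^n+t^{-n})$ and $F_{n-1}^q(\nu(t+t^{-1}))=\nu^{n-1}(t^{n-1}+t^{-(n-1)})$, obtaining
\begin{align*}
F_{n+1}^q(\nu(t+t^{-1}))&=\nu(t+t^{-1})\cdot\nu^n(t^n+t^{-n})-\nu^2\cdot\nu^{n-1}(t^{n-1}+t^{-(n-1)})\\
&=\nu^{n+1}\bigl((t^{n+1}+t^{-(n-1)}+t^{n-1}+t^{-(n+1)})-(t^{n-1}+t^{-(n-1)})\bigr)\\
&=\nu^{n+1}(t^{n+1}+t^{-(n+1)}),
\end{align*}
which closes the induction.

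The proof of statement (2) is entirely parallel, using $S_n^q$ in place of $F_n^q$ and the claimed value $\nu^n\sum_{k=0}^n t^{n-2k}$. The base cases $n=1,2$ are checked directly against $S_1^q(x)=x$ and $S_2^q(x)=x^2-q$, and the inductive step uses the identical recurrence with the telescoping identity $(t+t^{-1})\sum_{k=0}^n t^{n-2k}-\sum_{k=0}^{n-1}t^{(n-1)-2k}=\sum_{k=0}^{n+1}t^{(n+1)-2k}$, which is the standard second-kind Chebyshev cancellation scaled by the appropriate power of $\nu$. I do not anticipate any genuine obstacle here: the only subtle point is bookkeeping the powers of $\nu$ and confirming that the shared recurrence with second term $-q\,\Pi_{n-1}$ is exactly what turns $\nu\cdot\nu^n$ and $\nu^2\cdot\nu^{n-1}$ into the uniform factor $\nu^{n+1}$, so that the substitution $q=\nu^2$ is precisely calibrated to make these Laurent-polynomial identities work. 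The main thing to watch is simply to start the inductions at the correct base case (the identities fail at $n=0$ for the first kind), and to verify $n=2$ separately since the recurrence is only posited for $n\geq 2$.
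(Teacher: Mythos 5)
Your proof is correct and follows essentially the same route as the paper: induction on $n$ via the shared three-term recurrence, with the substitution $q=\nu^2$ calibrating the powers of $\nu$. If anything you are slightly more careful than the paper's own argument, which only states the base case $n=1$ and does not explicitly verify $n=2$ (needed since the recurrence is only posited for $n\geq 2$) or flag the failure of identity (1) at $n=0$.
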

	\begin{proof}
	      We first prove the property for quantized Chebyshev polynomials of the first kind. We prove it by induction on $n$. The property holds for $n=1$. We have~:
	      \begin{align*}
		      F_{n+1}^q({\nu}t+{\nu}t^{-1}) 
			      &= F_n^q({\nu}t+{\nu}t^{-1})F_1^q({\nu}t+{\nu}t^{-1})-qF_{n-1}^q({\nu}t+{\nu}t^{-1})\\
			      &= ({\nu}^nt^n+{\nu}^nt^{-n})({\nu}t+{\nu}t^{-1})-{\nu}^2({\nu}^{n-1}t^{n-1}+{\nu}^{n-1}t^{1-n})\\
			      &= {\nu}^{n+1}t^{n+1}+{\nu}^{n+1}t^{-(n+1)}.
	      \end{align*}
	      Now for quantized Chebyshev polynomials of the second kind we have~:
	      \begin{align*}
		      S_{n+1}^q({\nu}t+{\nu}t^{-1}) 
			      &= S_n^q({\nu}t+{\nu}t^{-1})S_1^q({\nu}t+{\nu}t^{-1})-qS_{n-1}^q({\nu}t+{\nu}t^{-1})\\
			      &= {\nu}^n(\sum_{k=0}^n t^{n-2k})({\nu}t+{\nu}t^{-1})-{\nu}^2{\nu}^{n-1}(\sum_{k=0}^{n-1} t^{n-1-2k})\\
			      &= {\nu}^n(\sum_{k=0}^n t^{n-2k})({\nu}t+{\nu}t^{-1})-{\nu}^{n}(\sum_{k=0}^{n-1} t^{n-2k})\\
			      &= {\nu}^{n+1}(\sum_{k=0}^{n+1} t^{n+1-2k}).
	      \end{align*}
	\end{proof}

	As a corollary, we obtain~:
	\begin{corol}\label{corol:secondasfirst}
	      For any $n \geq 0$, we have
	      $$S^q_n(x)=\sum_{k=0}^n q^kF^q_{n-2k}$$
	      with the convention that $F_i(x)=0$ if $i<0$.
	\end{corol}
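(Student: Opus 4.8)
The plan is to deduce the identity directly from the algebraic characterizations of $F^q_n$ and $S^q_n$ recorded in Lemma \ref{lem:caracalgqCheb}, rather than from the defining recurrences. First I would note that the asserted equality is an identity in $\Z[q][x]$, and that the substitution $x \mapsto \nu(t+t^{-1})$ together with $q=\nu^2$ induces a ring homomorphism $\Z[q][x] \fl \Z[\nu][t,t^{-1}]$. This homomorphism is injective: the map $q \mapsto \nu^2$ is injective on $\Z[q]$, and the images $\nu^i(t+t^{-1})^i$ of the powers $x^i$ have pairwise distinct top $t$-degrees, hence are $\Z[\nu]$-linearly independent. Consequently it suffices to verify the identity after applying this substitution.

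Next I would evaluate both sides. By Lemma \ref{lem:caracalgqCheb}(1), the right-hand side $\sum_{k=0}^n q^k F^q_{n-2k}$ specializes at $x=\nu(t+t^{-1})$ to $\sum_{k} \nu^{2k} F^q_{n-2k}(\nu(t+t^{-1}))$. Each summand with $n-2k \geq 1$ equals $\nu^{2k}\nu^{n-2k}(t^{n-2k}+t^{-(n-2k)}) = \nu^n(t^{n-2k}+t^{-(n-2k)})$; the summands with $n-2k<0$ vanish by the convention $F_i=0$ for $i<0$; and the unique summand with $n-2k=0$, which occurs exactly when $n$ is even, contributes $\nu^n F^q_0 = \nu^n$. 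By Lemma \ref{lem:caracalgqCheb}(2) the left-hand side $S^q_n$ specializes to $\nu^n\sum_{j=0}^n t^{n-2j}$, a Laurent polynomial whose exponents run symmetrically from $n$ to $-n$ in steps of $2$.

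It then remains to match the two images. After factoring out $\nu^n$, the exponents $\{n-2j : 0 \leq j \leq n\}$ are symmetric about $0$, so pairing each positive exponent $m \geq 1$ with its opposite $-m$ produces exactly the sums $t^m+t^{-m}$ appearing on the right, while the central exponent $t^0$ (present precisely when $n$ is even) matches the isolated $F^q_0$ contribution. This yields equality of the two images, hence of the original polynomials, which is the claim.

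The one delicate point — and the structural reason the first- and second-kind polynomials genuinely differ here — is the treatment of the $F^q_0$ term: the clean formula $F^q_n(\nu(t+t^{-1}))=\nu^n(t^n+t^{-n})$ of Lemma \ref{lem:caracalgqCheb} holds only for $n \geq 1$, since $F^q_0=1$ rather than $2$. The $n-2k=0$ summand must therefore be isolated and handled separately. I would remark that a direct induction on $n$ via the common recurrence $U_{n+1}=xU_{n}-qU_{n-1}$ is less convenient, as the same anomaly (here visible as $F^q_2=x^2-2q$ rather than $x^2-q$) obstructs the telescoping at the bottom of the summation range.
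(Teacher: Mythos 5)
Your proof is correct and follows essentially the same route as the paper: both arguments substitute $x=\nu(t+t^{-1})$, invoke the characterizations of Lemma \ref{lem:caracalgqCheb}, pair the exponents $t^{m}$ and $t^{-m}$, and isolate the central $F^q_0$ contribution when $n$ is even. The only difference is that you make explicit the injectivity of the substitution homomorphism $\Z[q][x]\fl\Z[\nu][t,t^{-1}]$, a point the paper leaves implicit in the word ``characterized.''
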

	\begin{proof}
	      We have
	      $$S_n({\nu}(t+t^{-1})) = {\nu}^n(\sum_{k=0}^n t^{n-2k})$$
	      If $n$ is even, we have
	      \begin{align*}
		      {\nu}^n(\sum_{k=0}^n t^{n-2k})
			      &=\nu^n \left(1+\sum_{k=0}^{n/2-1} t^{n-2k} + t^{2k-n}\right)\\
			      &=\nu^n + \sum_{k=0}^{n/2-1} {\nu}^{2k}({\nu}^{n-2k}t^{n-2k}+{\nu}^{n-2k}t^{2k-n})\\
			      &= \nu^n + \sum_{k=0}^{n/2-1} {\nu}^{2k}({\nu}^{n-2k}t^{n-2k}+{\nu}^{n-2k}t^{2k-n})\\
			      &= \nu^n + \sum_{k=0}^{n/2-1} q^kF_{n-2k}({\nu}(t+t^{-1}))\\
			      &= \sum_{k=0}^n q^kF_{n-2k}({\nu}(t+t^{-1}))
	      \end{align*}
	      where the last equality follows from the convention that $F_i=0$ for $i<0$.
	      
	      If $n$ is odd, we denote by $[n/2]$ the integral floor of $n/2$. Then, 
	      \begin{align*}
		      {\nu}^n(\sum_{k=0}^n t^{n-2k})
			      &=\nu^n \sum_{k=0}^{[n/2]} t^{n-2k} + t^{2k-n}\\
			      &=\sum_{k=0}^{[n/2]} {\nu}^{2k}({\nu}^{n-2k}t^{n-2k}+{\nu}^{n-2k}t^{2k-n})\\
			      &=\sum_{k=0}^{[n/2]} {\nu}^{2k}({\nu}^{n-2k}t^{n-2k}+{\nu}^{n-2k}t^{2k-n})\\
			      &=\sum_{k=0}^{[n/2]} q^kF_{n-2k}({\nu}(t+t^{-1}))\\
			      &= \sum_{k=0}^n q^kF_{n-2k}({\nu}(t+t^{-1}))
	      \end{align*}
	      where the last equality follows from the convention that $F_i=0$ for $i<0$. This proves the corollary.
	\end{proof}

	\begin{corol}\label{corol:firstassecond}
	      For any $n \geq 0$, we have 
	      $$F^q_n(x)=S^q_n(x)-qS^q_{n-2}(x).$$
	\end{corol}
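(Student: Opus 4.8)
The plan is to reduce the identity to the algebraic characterizations established in Lemma~\ref{lem:caracalgqCheb}. Since both $F_n^q(x)$ and $S_n^q(x)$ lie in $\Z[q][x]$, and since the evaluation $x\mapsto\nu(t+t^{-1})$, $q\mapsto\nu^2$ is injective (see below), it suffices to verify the claimed equality after this substitution. First I would substitute $x=\nu(t+t^{-1})$ into the right-hand side $S_n^q(x)-qS_{n-2}^q(x)$ and apply Lemma~\ref{lem:caracalgqCheb}(2) together with $q=\nu^2$ to rewrite it as a difference of two explicit sums of Laurent monomials in $t$.

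Concretely, one obtains
$$S_n^q(\nu(t+t^{-1}))-qS_{n-2}^q(\nu(t+t^{-1}))=\nu^n\sum_{k=0}^n t^{n-2k}-\nu^n\sum_{k=0}^{n-2}t^{(n-2)-2k}.$$
The second sum runs over the exponents $n-2,n-4,\dots,-(n-2)$, which are precisely the interior exponents of the first sum; hence every term cancels except the two extremes, leaving $\nu^n(t^n+t^{-n})$. By Lemma~\ref{lem:caracalgqCheb}(1) this is exactly $F_n^q(\nu(t+t^{-1}))$, so the two sides agree after substitution. The small cases $n=0,1$ are handled by reading the empty sum $\sum_{k=0}^{n-2}$ as $0$, consistent with the convention $S_i^q=0$ for $i<0$.

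The one point genuinely requiring care, and the main obstacle, is the passage from this functional identity back to an identity of polynomials in $\Z[q][x]$, that is, the injectivity of the evaluation homomorphism $\Z[q][x]\to\Z[\nu][t^{\pm1}]$. This holds because $\nu(t+t^{-1})$ is transcendental over $\Z[\nu]$: distinct powers of $\nu(t+t^{-1})$ have distinct top degrees in $t$, so the images of the monomials $x^j$ are $\Z[\nu]$-linearly independent, and $\Z[q]=\Z[\nu^2]$ embeds into $\Z[\nu]$. This is the same principle already used implicitly in Corollary~\ref{corol:secondasfirst}. Should one prefer to avoid the substitution entirely, the identity follows purely formally from Corollary~\ref{corol:secondasfirst}: writing $S_n^q=\sum_{k=0}^n q^kF_{n-2k}^q$ and $qS_{n-2}^q=\sum_{k=1}^{n-1}q^kF_{n-2k}^q$, the two sums telescope and leave only the $k=0$ term $F_n^q$, the surviving $k=n$ term $q^nF_{-n}^q$ vanishing by the convention $F_i^q=0$ for $i<0$.
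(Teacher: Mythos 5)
Your proof is correct, and since the paper states this corollary without proof (immediately after Corollary \ref{corol:secondasfirst}), your substitution argument via Lemma \ref{lem:caracalgqCheb} is precisely the intended one, mirroring the proof of the preceding corollary; your closing remark that the identity also follows formally by telescoping the expansion $S_n^q=\sum_k q^kF_{n-2k}^q$ is an equally valid, even shorter route.
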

\end{section}

\begin{section}{Chebyshev polynomials and bases in affine cluster algebras}\label{section:bases}
	In this section, we study possible interactions between Chebyshev polynomials and bases in affine cluster algebras. We first study the case of the Kronecker quiver relying on \cite{shermanz, CZ}. Then, we study the case of a quiver of type $\Aaffine_{2,1}$ studied in particular in \cite{Cerulli:A21}. Finally we give conjectures for the general affine case.

	\begin{subsection}{The Kronecker quiver}
	      We consider the Kronecker quiver~:
	      $$\xymatrix{
		      K: & 1 \ar@<+2pt>[r]\ar@<-2pt>[r] & 2
	      }$$
	      We denote by $\mathcal A(K, \textbf x, \textbf y)$ the cluster algebra with principal coefficients at the initial seed $(K,\textbf x, \textbf y)$ where $\textbf y=\ens{y_1,y_2}$ and $\textbf x=\ens{x_1,x_2}$. We simply denote by $\mathcal A(K)$ the coefficient-free cluster algebra with initial seed $(K,\textbf x)$.
	      
	      $K$ is an affine quiver and the regular components of $\Gamma(kK)$ form a $\P^1(k)$-family of homogeneous tubes. The minimal imaginary root of $K$ is $\delta=\alpha_1+\alpha_2$. For any $\lambda \in \P^1(k)$, we denote by $\mathcal T_{\lambda}$ the tube corresponding to the parameter $\lambda$, by $M_\lambda$ the unique quasi-simple module in $\mathcal T_\lambda$ and for any $n \geq 1$, by $M_\lambda^{(n)}$ the indecomposable module in $\mathcal T_\lambda$ with quasi-socle $M_\lambda$ and quasi-length $n$. It follows from \cite{CZ} (see also \cite{Dupont:BaseAaffine}) that $X^Q_{M_\lambda}$ and $\XQy_{M_\lambda}$ do not depend on the choice of the parameter $\lambda \in \P^1(k)$ and we denote by
	      $$u(\textbf y)=\XQy_{M_\lambda}=\frac{x_1^2+y_1y_2x_2^2+y_2}{x_1x_2}$$
	      $$u=X^Q_{M_\lambda}=\frac{x_1^2+x_2^2+1}{x_1x_2}$$
	      these values.
	      
	      The following theorems give bases in the coefficient-free cluster algebra $\mathcal A(K)$.
	      \begin{theorem}[\cite{CZ}]\label{theorem:baseCZ}
		      The set
		      $$\mathcal C(K)=\ens{\textrm{cluster monomials}} \sqcup \ens{S_n(u) | n \geq 1}$$
		      is a $\Z$-basis in $\mathcal A(K)$.
	      \end{theorem}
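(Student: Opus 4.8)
The plan is to realise every element of $\mathcal C(K)$ as a coefficient-free cluster character $X^Q_N$ and then to treat linear independence and spanning separately. The cluster monomials of $\mathcal A(K)$ are the characters $X^Q_T$ with $T$ a rigid object of $\CC_K$ (a sum of indecomposable rigid objects lying in a common cluster, namely preprojective and preinjective modules together with the shifted projectives $P_i[1]$), by \cite{CK1,CK2}. On the regular side, each homogeneous tube $\mathcal T_\lambda$ is of the form $\Z\A_\infty/(1)$, so the coefficient-free specialisation of Theorem \ref{theorem:polycomposantesreg} (set all $y_i=1$ and use Lemma \ref{lem:quantizeddeform}, under which $P_{n,1}$ at $q=1$ is the normalised second kind Chebyshev polynomial $S_n$) gives $X^Q_{M_\lambda^{(n)}}=S_n(u)$, independently of $\lambda$. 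Thus $\mathcal C(K)$ is precisely the image under $X^Q_?$ of the set of objects consisting of all rigid objects and all regular indecomposables $M_\lambda^{(n)}$, indexed combinatorially.

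For linear independence I would use the pointedness method of \cite{shermanz}: attach to each Laurent polynomial in $x_1,x_2$ a leading exponent and show these are pairwise distinct over $\mathcal C(K)$. For $X^Q_N$ the natural choice is the $g$-vector read off the extremal ($e=0$) summand of the Caldero--Chapoton expansion, namely $\bigl(-\<\alpha_1,\ddim N\>,-\<\alpha_2,\ddim N\>\bigr)$ in the module case. For cluster monomials these are the usual $g$-vectors, which fill the cones of the $g$-vector fan and are pairwise distinct. For $S_n(u)=X^Q_{M_\lambda^{(n)}}$, since $\ddim M_\lambda^{(n)}=n\delta$, the leading exponent is $(n,-n)$, lying on the imaginary ray $\Z_{>0}\cdot(1,-1)$ that separates the preprojective and preinjective cones of the fan; hence it coincides with no cluster-monomial exponent. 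Once all leading exponents are pairwise distinct, a nontrivial $\Z$-linear relation would retain the monomial indexed by a maximal leading exponent, a contradiction.

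For spanning it suffices to prove that $\Z\mathcal C(K)$ is closed under multiplication by each cluster variable $x_m$: since $1\in\mathcal C(K)$ and the $x_m$ generate $\mathcal A(K)$ as a $\Z$-algebra, closure then forces $\Z\mathcal C(K)=\mathcal A(K)$. The structural backbone is the transfer relation $u\,x_m=x_{m-1}+x_{m+1}$, valid for every $m$ by the translation-invariant shape of $u$ in each cluster, together with the Chebyshev recursion $u\,S_n(u)=S_{n+1}(u)+S_{n-1}(u)$; these make $\ens{x_m}$ and $\ens{S_n(u)}$ obey the same three-term recurrence and keep $u$-multiplication inside $\Z\mathcal C(K)$. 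The remaining products $x_m\cdot(\text{cluster monomial})$ and $x_m\cdot S_n(u)$ I would expand with the coefficient-free cluster-character multiplication formula (\cite{CK2}, and the coefficient-free specialisation of Proposition \ref{prop:multps}), which rewrites a product $X^Q_MX^Q_N$ as a $\Z_{\geq 0}$-combination of characters whose constituents are again rigid or regular; the sample identity $x_0x_3=x_1x_2+u$ shows how regular characters $S_n(u)$ enter.

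The main obstacle is precisely the mixed product $x_m\cdot S_n(u)$, equivalently the product of two cluster monomials supported in far-apart clusters: one must identify the middle terms of the multiplication formula attached to an extension between a (pre)projective or (pre)injective indecomposable and a regular indecomposable of $\CC_K$, and verify that they split into rigid and regular summands whose characters lie in $\mathcal C(K)$. Controlling these extension spaces uniformly in $m$ and $n$ — or, equivalently, carrying out the rank-two Laurent computation once and for all — is the delicate step; everything else reduces to the two three-term recurrences above.
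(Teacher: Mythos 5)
The paper does not prove this statement: it is quoted from \cite{CZ}, so there is no internal proof to compare against. Your sketch follows essentially the strategy of \cite{CZ} and \cite{shermanz} — realize every element of $\mathcal C(K)$ as a coefficient-free cluster character, get linear independence from pointedness at pairwise distinct leading exponents, and get spanning from closure of $\Z\mathcal C(K)$ under multiplication by cluster variables. The one step you flag as delicate, the mixed products $x_m\cdot S_n(u)$, in fact dissolves into the two three-term recurrences you already have: induction on $n$ using $u\,x_m=x_{m-1}+x_{m+1}$ and $S_{n+1}(u)=u\,S_n(u)-S_{n-1}(u)$ gives the closed form $S_n(u)\,x_m=\sum_{k=0}^{n}x_{m-n+2k}$, and likewise $x_m x_{m+k}=x_{m+1}x_{m+k-1}+S_{k-2}(u)$ for $k\geq 2$, so no further analysis of extension spaces in $\CC_K$ is required beyond these rank-two identities.
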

	      \begin{theorem}[\cite{shermanz}]\label{theorem:baseSZ}
		      The set
		      $$\mathcal B(K)=\ens{\textrm{cluster monomials}} \sqcup \ens{F_n(u) | n \geq 1}$$
		      is a canonically positive $\Z$-basis in $\mathcal A(K)$. 
	      \end{theorem}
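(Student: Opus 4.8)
The statement is quoted from \cite{shermanz}, so I only indicate the strategy I would follow to establish it and how it dovetails with the companion basis of Theorem \ref{theorem:baseCZ}. Three things must be checked: that every element of $\mathcal B(K)$ lies in $\mathcal A(K)$ and is positive, that $\mathcal B(K)$ is a $\Z$-basis, and that it is canonically positive. The plan is to first record positivity of the basis elements. Cluster monomials are products of cluster variables, hence positive by the Laurent phenomenon together with positivity of the cluster variables. For the imaginary elements I would expand $F_n(u)$ in the initial cluster $\ens{x_1,x_2}$ by induction from $u=(x_1^2+x_2^2+1)/(x_1x_2)$, using $F_2(u)=u^2-2$ and $F_{n+1}(u)=uF_n(u)-F_{n-1}(u)$ for $n\geq 2$, and check that every coefficient of the resulting Laurent polynomial is a nonnegative integer; positivity in the remaining clusters follows from the same computation performed from a cluster $\ens{x_m,x_{m+1}}$, using that $u$ is fixed by the shift symmetry of $\mathcal A(K)$, so that its expansion has the same shape in every cluster.

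Next, for the basis property I would use the denominator vector filtration. Each element of $\mathcal B(K)$ carries a distinguished monomial of coefficient $1$, namely its denominator monomial: a cluster monomial $x_m^ax_{m+1}^b$ has denominator vector in a distinguished cone, while $F_n(u)$ has denominator monomial $x_1^{-n}x_2^{-n}$ of coefficient $1$, since the top $u$-power contributes $(x_1x_2)^{-n}$ and the lower Chebyshev terms have strictly less negative denominator. These denominator vectors are pairwise distinct (indeed they biject with $\Z^2$, the ray $\Z_{>0}\delta$ being covered exactly by the $F_n(u)$, where $\delta=(1,1)$ is the minimal imaginary root). Unitriangularity with respect to this filtration then yields linear independence, and, by the usual straightening argument reducing an arbitrary element of $\mathcal A(K)$ against its extreme denominator monomial and inducting, spanning over $\Z$.

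The main obstacle is canonical positivity: that, conversely, every positive element of $\mathcal A(K)$ is a $\Z_{\geq 0}$-combination of $\mathcal B(K)$. Writing a positive $y$ uniquely as $y=\sum_B c_B B$ with $B$ ranging over $\mathcal B(K)$, I would show each $c_B\geq 0$ by isolating $c_B$ as a boundary value of the Laurent expansions: specializing toward a limiting cluster direction $m\to\pm\infty$, or reading off an extreme corner of the Newton polytope, extracts $c_B$ with a sign forced nonnegative by positivity of $y$ in every cluster. The delicate point, and the technical heart of \cite{shermanz}, is to make this extraction rigorous for the imaginary elements along the ray $\mathbb{R}_{>0}\delta$. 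This is also exactly where the first-kind normalization is pinned down: the identity $u^2=F_2(u)+2$ forces the correction keeping all coefficients nonnegative to be $+2\cdot 1$, so the extremal positive elements of degree $n\delta$ are the $F_n(u)$ and not the $S_n(u)$. Indeed $S_n(u)=\sum_{k\geq 0}F_{n-2k}(u)$ (Corollary \ref{corol:secondasfirst} at $q=1$) exhibits the $S_n(u)$ of Theorem \ref{theorem:baseCZ} as nonnegative, non-extremal combinations of the $F_k(u)$, explaining why both sets are $\Z$-bases but only $\mathcal B(K)$ is canonically positive.
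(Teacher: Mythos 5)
The paper offers no proof of this statement: it is imported verbatim from \cite{shermanz}, so there is nothing internal to compare your argument against. Your outline is, however, a faithful reconstruction of the Sherman--Zelevinsky strategy: the three-step division (positivity of the proposed elements, the $\Z$-basis property via the denominator-vector parametrization of $\mathcal B(K)$ by $\Z^2$ with the ray $\Z_{>0}\delta$ carried by the $F_n(u)$, and then the converse extremality statement), as well as the observation that $S_n(u)=\sum_{k}F_{n-2k}(u)$ explains why $\mathcal C(K)$ of Theorem \ref{theorem:baseCZ} is a basis but not the canonically positive one, all match the cited source and the way this paper later uses the result. Be aware that the two substantive points --- positivity of $F_n(u)$ in every cluster (which is not a consequence of the Laurent phenomenon alone, even for cluster variables) and the rigorous extraction of the coefficients $c_B$ showing they are forced nonnegative --- are precisely where the real work of \cite{shermanz} lies; your sketch correctly locates them but does not discharge them, which is acceptable here only because the theorem is being quoted rather than proved.
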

	      
	      In \cite[Theorem 6.3]{shermanz}, the authors defined a canonically positive basis in the cluster algebra with universal coefficients associated to the Kronecker quiver by lifting the canonically positive basis in the coefficient-free cluster algebra $\mathcal A(K,\textbf x)$. It is not clear at first whereas this lifting still gives rise to quantized Chebyshev polynomials. We prove that up to a normalization by coefficients, this is still the case for principal coefficients.
	      
	      For rank two cluster algebras, the description of coefficients used in \cite{shermanz} is the one introduced in \cite[Remark 2.5]{cluster1} which we will briefly recall here. Let $\mathcal A(K,\textbf x,\textbf y)$ be the cluster algebra with principal coefficients at the initial seed $(K,\textbf x,\textbf y)$ and $\P=\Trop(\textbf y)$. Then, $\P$ can be described as the free abelian group generated by $\ens{q_i | i \in \Z} \sqcup \ens{r_0,r_1}$ with respect to the relations
	      \begin{equation}\label{eq:relationqr}
		      r_{m-1}r_{m+1}=q_{m-1}q_{m+1}r_m^2
	      \end{equation}
	      where coefficients in terms of $\textbf y$ can be recovered from $\ens{r_i,q_i| i \in \Z}$ (see \cite[Remark 2.7]{cluster4}). In particular, we have the following equalities
	      \begin{equation}\label{eq:relationsqry}
		      r_1=y_2, r_2=1, q_1=1 \textrm{ and } q_2=y_1.
	      \end{equation}
	      
	      We now consider the completion $\hat \P=\Q \otimes_\Z \P$ obtained by adjoining to $\P$ the roots of all degrees from all the elements of $\P$. We denote by $\hat {\mathcal A}(K,\textbf x,\textbf y)$ the $\Z\hat \P$-algebra obtained from $\hat{\mathcal A}(K,\textbf x,\textbf y)$ by extension of scalars. An element in $\hat{\mathcal A}(K,\textbf x,\textbf y)$ is called \emph{positive} if it can be written as a Laurent polynomial with coefficients in $\Z_{>0}\hat \P$ in any cluster of $\hat{\mathcal A}(K,\textbf x,\textbf y)$. A $\Z\hat \P$-basis $\mathcal B$ of $\hat{\mathcal A}(K,\textbf x,\textbf y)$ is called \emph{canonically positive} if positive elements in $\hat{\mathcal A}(K,\textbf x,\textbf y)$ are exactly $\Z_{>0}\hat \P$-linear combinations of elements of $\mathcal B$. Such a basis is unique up to normalization by elements of $\hat \P$.
	      
	      \begin{theorem}
		      The set
		      $$\mathcal B^{\textbf y}(K)=\ens{\textrm{cluster monomials}} \sqcup \ens{F^{\textbf y^{\delta}}_n(u(\textbf y)) | n \geq 1}$$
		      is a canonically positive $\Z\hat \P$-basis in $\hat {\mathcal A}(K,\textbf x, \textbf y)$.
	      \end{theorem}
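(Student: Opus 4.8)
The plan is to verify the two defining conditions of a canonically positive basis in turn, treating first the basis property and then the canonical positivity. By Theorem \ref{theorem:polycomposantesreg} applied to the homogeneous tubes of $\Gamma(kK)$, the characters of the regular indecomposables are $\XQy_{M_\lambda^{(n)}}=S_n^{\textbf{y}^\delta}(u(\textbf{y}))$; I claim that these, together with the cluster monomials, form a $\Z\hat\P$-basis of $\hat{\mathcal A}(K,\textbf{x},\textbf{y})$, the coefficient analogue of the Caldero--Zelevinsky basis of Theorem \ref{theorem:baseCZ}. I would obtain this analogue by lifting the linear independence and spanning of Theorem \ref{theorem:baseCZ}, using a grading of $\hat{\mathcal A}(K,\textbf{x},\textbf{y})$ in the spirit of Lemma \ref{lem:simplealgind} to separate the homogeneous elements $\XQy_{M_\lambda^{(n)}}$ by their leading terms. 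The relation $F_n^{\textbf{y}^\delta}=S_n^{\textbf{y}^\delta}-\textbf{y}^\delta S_{n-2}^{\textbf{y}^\delta}$ of Corollary \ref{corol:firstassecond} (with $S_0^{\textbf{y}^\delta}=1$ read as the trivial cluster monomial) is unitriangular over $\Z\hat\P$ with respect to this basis, so replacing the family $\ens{S_n^{\textbf{y}^\delta}(u(\textbf{y}))}$ by $\ens{F_n^{\textbf{y}^\delta}(u(\textbf{y}))}$ again yields a $\Z\hat\P$-basis, namely $\mathcal B^{\textbf{y}}(K)$.

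For canonical positivity I would transport the canonically positive basis of Sherman and Zelevinsky in universal coefficients \cite[Theorem 6.3]{shermanz} along the specialization to principal coefficients determined by (\ref{eq:relationsqry}). In universal coefficients that basis consists of the cluster monomials together with a family of imaginary elements $z_n$ obeying a three-term recurrence $z_{n+1}=z_1z_n-c_nz_{n-1}$, where $c_n$ is a monomial in the coefficients subject to (\ref{eq:relationqr}). The substitution (\ref{eq:relationsqry}) defines a homomorphism of semifields onto $\hat\P$, under which the cluster monomials specialize to cluster monomials, $z_1$ specializes to $u(\textbf{y})$ by direct comparison of Laurent expansions, and the recurrence becomes the defining recurrence of $F_n^q$ from Definition \ref{defi:firstkind} with $q=\textbf{y}^\delta$. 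To fix the normalization I would invoke Lemma \ref{lem:caracalgqCheb}: setting $\nu=\sqrt{\textbf{y}^\delta}\in\hat\P$ and writing $u(\textbf{y})=\nu(t+t^{-1})$, one has $F_n^{\textbf{y}^\delta}(u(\textbf{y}))=\nu^n(t^n+t^{-n})$, whose specialization at $y_1=y_2=1$ is exactly the coefficient-free element $F_n(u)$ of Theorem \ref{theorem:baseSZ}. Matching this against the specialized $z_n$ forces the two to agree up to a factor in $\hat\P$, a power of $\nu$; this is the promised normalization by coefficients, and it is precisely what obliges one to pass to the completion $\hat\P$ rather than work in $\P$.

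The main obstacle is the transfer of canonical positivity itself. That $\Z_{>0}$-positive elements upstairs specialize to positive elements downstairs is immediate, so each specialized basis element, and hence each $F_n^{\textbf{y}^\delta}(u(\textbf{y}))$, is positive in $\hat{\mathcal A}(K,\textbf{x},\textbf{y})$. The delicate point is the converse: I must show that an arbitrary positive element of $\hat{\mathcal A}(K,\textbf{x},\textbf{y})$ lifts to a positive element of the universal algebra, so that its canonically positive expansion descends to a $\Z_{>0}\hat\P$-linear combination of $\mathcal B^{\textbf{y}}(K)$. I would establish this by checking that the specialization encoded by (\ref{eq:relationqr}) and (\ref{eq:relationsqry}) is surjective and admits a section compatible with the positive cones of the two semifields; controlling this lift of a positive Laurent polynomial, rather than the algebraic identification of the basis elements, is where the real work lies.
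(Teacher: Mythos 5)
Your strategy diverges from the paper's at the crucial point, and the divergence creates a gap that you yourself flag but do not close. The paper does not transport positivity along a specialization from universal coefficients. Instead it invokes the result of Sherman and Zelevinsky that there is a $\Z\hat\P$-\emph{algebra isomorphism} $\psi:\mathcal A(K,\underline{\textbf x})\otimes\Z\hat\P\to\hat{\mathcal A}(K,\textbf x,\textbf y)$ sending each cluster variable $\underline x_m$ to $\left(q_m/r_m\right)^{1/2}x_m$, that is, to a cluster variable rescaled by a unit of $\hat\P$ (this is the real reason the completion $\hat\P$ is needed). Because $\psi$ rescales every cluster by invertible coefficients, it identifies the positive cones in both directions, so $\psi(\mathcal B(K))$ is automatically a canonically positive $\Z\hat\P$-basis; there is no lifting problem. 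The remaining work in the paper is purely computational: one checks $\psi(u)=\textbf y^{-\delta/2}u(\textbf y)$ and then, by induction on the three-term recurrence, $\psi(F_n(u))=\textbf y^{-n\delta/2}F_n^{\textbf y^\delta}(u(\textbf y))$, so that $\psi(\mathcal B(K))$ and $\mathcal B^{\textbf y}(K)$ agree up to normalization by elements of $\hat\P$, which is all that canonical positivity is defined up to.

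By contrast, the specialization you propose is a non-invertible homomorphism of coefficient semifields, and for such a map neither the linear independence of the images of a basis nor the converse positivity statement (``every positive element downstairs lifts to a positive element upstairs'') is automatic: the positive cone of the target can be strictly larger than the image of the positive cone of the source. You correctly identify this as where the real work lies, but that step is essentially the entire content of the theorem, so the proposal does not yet constitute a proof. A similar issue affects your first paragraph: the coefficient analogue of Theorem \ref{theorem:baseCZ} is not available at this point in the paper --- it is deduced \emph{from} the present theorem via Corollary \ref{corol:secondasfirst}, not the other way around --- so asserting it by ``lifting via a grading'' reverses the logical order and leaves a second substantial claim unproved. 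If you replace the specialization by the Sherman--Zelevinsky isomorphism $\psi$, both gaps disappear, and your normalization discussion via Lemma \ref{lem:caracalgqCheb} becomes unnecessary (the paper instead verifies the normalization $\textbf y^{-n\delta/2}$ by a direct induction on the recurrence of Definition \ref{defi:firstkind}).
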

	      \begin{proof}
		      Let $\mathcal A(K,\u{\textbf x})$ be the coefficient-free cluster algebra with initial seed $(K,\u{\textbf x})$ where $\u {\textbf x}=\ens{\u x_1, \u x_2}$. 
		      We denote by $\u x_i, i \in \Z$ the cluster variables in the coefficient-free cluster algebra $\mathcal A(K,\u{\textbf x})$ given by $\u x_{i-1} \u x_{i+1}=\u x_i^2+1$ for any $i \in \Z$. 
		      
		      Set
		      $$u=\frac{\u x_1^2+\u x_2^2+1}{\u x_1 \u x_2}.$$
		      By Theorem \ref{theorem:baseSZ}, the canonically positive $\Z$-basis of $\mathcal A(K,\u{\textbf x})$ is given by
		      $$\mathcal B(K)=\ens{\textrm{cluster monomials}} \sqcup \ens{F_n(u) | n \geq 1}.$$
		      
		      Sherman and Zelevinsky proved in \cite{shermanz} that there exists a $\Z\hat \P$-linear isomorphism
		      $$\psi:\left\{\begin{array}{rcl}
			      \mathcal A(K,\u{\textbf x}) \otimes \Z\hat \P & \fl & \hat {\mathcal A}(K,\textbf x, \textbf y)\\
			      \u x_m & \mapsto & \left(\frac{q_m}{r_m}\right)^{\frac{1}{2}} x_m
		      \end{array}\right.$$
		      In particular, as $\psi$ is an isomorphism of $\Z\hat \P$-algebras, $\psi(\mathcal B(K))$ is a canonically positive $\Z\hat \P$-basis in $\hat {\mathcal A}(K,\textbf x, \textbf y)$. We now prove that, up to normalization by elements of $\hat \P$, $\psi(\mathcal B(K))$ and $\mathcal B^{\textbf y}(K)=\ens{\textrm{cluster monomials}} \sqcup \ens{F^{\textbf y^{\delta}}_n(u(\textbf y)) | n \geq 1}$ coincide. For this, it suffices to prove that for any $n \geq 1$, $\psi(F_n(u)) \in \hat \P F_n^{\textbf y^{\delta}}(u(\textbf y))$. More precisely, we prove by induction that for any $n \geq 1$, we have
		      $$\psi(F_n(u))=\textbf y^{-\frac{n\delta}{2}}F_n^{\textbf y^{\delta}}(u(\textbf y)).$$
		      We recall that 
		      $$u=\u x_0\u x_3-\u x_1\u x_2$$
		      where $\u x_0=\frac{\u x_1^2+1}{\u x_2}$ and $\u x_3=\frac{\u x_2^2+1}{\u x_1}$.
		      Expressing $r_0,r_3,q_0,q_3$ in terms of $y_1,y_2$ using identities (\ref{eq:relationqr}) and (\ref{eq:relationsqry}), a direct computation leads to 
		      \begin{align*}
			      \psi(u) 
				      &=\psi(\u x_0)\psi(\u x_3)-\psi(\u x_1)\psi(\u x_2)\\
				      &=\frac{1}{x_1 x_2}
				      \left(
					      \frac{x_1^2}{\textbf y^{\frac \delta 2}}+ \textbf y^{\frac \delta 2} x_2^2 + \frac{y_2^{-\frac 12}}{y_1^{-\frac 12}}
				      \right)\\
				      &=\textbf y^{-\frac \delta 2} \frac{x_1^2+y_1y_2x_2^2+y_2}{x_1x_2}\\
				      &=\textbf y^{-\frac \delta 2} u(\textbf y)
		      \end{align*}
		      Fix now some integer $n \geq 1$, we know that
		      $$F_{n+1}(u)=F_n(u)F_{1}(u)-F_{n-1}(u)$$
		      so 
		      \begin{align*}
			      \psi(F_{n+1}(u))
				      &=\psi(F_{n}(u))\psi(F_{1}(u))-\psi(F_{n-1}(u))\\
				      &=\textbf y^{-\frac{n\delta}{2}} F_{n}^{\textbf y^\delta}(u(\textbf y))\textbf y^{-\frac{\delta}{2}} F_{1}^{\textbf y^\delta}(u(\textbf y))-\textbf y^{-\frac{(n-1)}{2}\delta} F_{n-1}^{\textbf y^\delta}(u(\textbf y))\\
				      &=\textbf y^{-\frac{n+1}{2}\delta} F_{n}^{\textbf y^\delta}(u(\textbf y))F_{1}^{\textbf y^\delta}(u(\textbf y))-\textbf y^{-\frac{(n-1)}{2}\delta} F_{n-1}^{\textbf y^\delta}(u(\textbf y))\\
				      &=\textbf y^{-\frac{n+1}{2}\delta} \left(F_{n}^{\textbf y^\delta}(u(\textbf y))F_{1}^{\textbf y^\delta}(u(\textbf y))-\textbf y^{\delta} F_{n-1}^{\textbf y^\delta}(u(\textbf y))\right)\\
				      &=\textbf y^{-\frac{n+1}{2}\delta} \left(F_{n+1}^{\textbf y^\delta}(u(\textbf y))\right)
		      \end{align*}
		      where the last equality follows from Definition \ref{defi:firstkind}. Since $\hat \P$ contains the roots of all degrees of elements of $\P$, $\textbf y^{-\frac{n+1}{2}\delta}=y_1^{-\frac {n+1}{2}}y_2^{-\frac {n+1}{2}}$ belongs to $\hat \P$. For every $n \geq 1$, we have $F_n^{\textbf y^{\delta}}(u(\textbf y)) \in \P \psi(F_n(u))$ and thus up to a normalization by elements of $\hat \P$, $\psi(\mathcal B(K))$ coincides with $\mathcal B^{\textbf y}(K)$ and $\mathcal B^{\textbf y}(K)$ is a canonically positive basis of $\hat {\mathcal A}(K,\textbf x, \textbf y)$.
	      \end{proof}

	      Using Corollary \ref{corol:secondasfirst}, we get an analogue with coefficients of Theorem \ref{theorem:baseCZ}~:
	      \begin{theorem}
		      The set
		      $$\mathcal C^{\textbf y}(K)=\ens{\textrm{cluster monomials}} \sqcup \ens{S^{\textbf y^{\delta}}_n(u(\textbf y)) | n \geq 1}$$
		      is a $\Z\hat \P$-basis in $\hat{\mathcal A}(K,\textbf x, \textbf y)$.
	      \end{theorem}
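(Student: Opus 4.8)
The plan is to deduce this directly from the theorem just proved, which asserts that
$$\mathcal B^{\textbf y}(K)=\ens{\textrm{cluster monomials}} \sqcup \ens{F^{\textbf y^{\delta}}_n(u(\textbf y)) | n \geq 1}$$
is a $\Z\hat\P$-basis of $\hat{\mathcal A}(K,\textbf x, \textbf y)$. Since $\mathcal B^{\textbf y}(K)$ and $\mathcal C^{\textbf y}(K)$ share the very same set of cluster monomials and differ only by replacing each first-kind polynomial $F^{\textbf y^\delta}_n(u(\textbf y))$ by the corresponding second-kind polynomial $S^{\textbf y^\delta}_n(u(\textbf y))$, it suffices to prove that the change of generators between the two families is effected by a matrix invertible over $\Z\hat\P$; the spanning and linear independence of $\mathcal C^{\textbf y}(K)$ will then follow from those of $\mathcal B^{\textbf y}(K)$.

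First I would record that the trivial cluster monomial $1$ lies among the cluster monomials and equals $F^{\textbf y^\delta}_0(u(\textbf y))=S^{\textbf y^\delta}_0(u(\textbf y))$. Then, specializing $q=\textbf y^\delta$ in Corollary \ref{corol:secondasfirst}, I would write, for every $n \geq 1$,
$$S^{\textbf y^\delta}_n(u(\textbf y))=F^{\textbf y^\delta}_n(u(\textbf y))+\sum_{k \geq 1}(\textbf y^\delta)^k F^{\textbf y^\delta}_{n-2k}(u(\textbf y)),$$
where terms with negative index vanish and the index-$0$ term (occurring precisely when $n$ is even) contributes a $\Z\hat\P$-multiple of the cluster monomial $1$. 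Thus each $S^{\textbf y^\delta}_n(u(\textbf y))$ equals $F^{\textbf y^\delta}_n(u(\textbf y))$ plus a $\Z\hat\P$-linear combination of the $F^{\textbf y^\delta}_m(u(\textbf y))$ with $1 \leq m < n$ and of the fixed cluster monomial $1$. Ordering the generators by the index $n$ and keeping the cluster monomials fixed, the transition matrix expressing $\mathcal C^{\textbf y}(K)$ in the basis $\mathcal B^{\textbf y}(K)$ is therefore unitriangular, the coefficient of $F^{\textbf y^\delta}_n(u(\textbf y))$ in $S^{\textbf y^\delta}_n(u(\textbf y))$ being $1$.

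To finish, I would invoke Corollary \ref{corol:firstassecond}, which furnishes the explicit inverse relation $F^{\textbf y^\delta}_n(u(\textbf y))=S^{\textbf y^\delta}_n(u(\textbf y))-\textbf y^\delta S^{\textbf y^\delta}_{n-2}(u(\textbf y))$; using $S^{\textbf y^\delta}_0(u(\textbf y))=1$ and the vanishing of negative-index terms, this shows that every element of $\mathcal B^{\textbf y}(K)$ lies in the $\Z\hat\P$-span of $\mathcal C^{\textbf y}(K)$, while the displayed expansion above shows the reverse inclusion. Hence the two families generate the same $\Z\hat\P$-module and the mutually inverse transition matrices both have entries in $\Z\hat\P$, so $\mathcal C^{\textbf y}(K)$ is simultaneously a spanning set and $\Z\hat\P$-linearly independent, i.e.\ a $\Z\hat\P$-basis.

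I do not expect any genuine obstacle, since the substance is already carried by the preceding theorem and by Corollaries \ref{corol:secondasfirst} and \ref{corol:firstassecond}; the only point requiring a little care is the bookkeeping at the boundary indices $n \in \ens{1,2}$ and the observation that the index-$0$ contribution $F^{\textbf y^\delta}_0=1$ is absorbed by the shared cluster monomials rather than producing a spurious new generator, so that the unitriangular structure of the change of basis is not disturbed.
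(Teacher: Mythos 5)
Your proposal is correct and follows essentially the same route as the paper, which deduces the statement from the preceding theorem on $\mathcal B^{\textbf y}(K)$ together with Corollary \ref{corol:secondasfirst}; you merely make explicit the unitriangular change of basis (and its inverse via Corollary \ref{corol:firstassecond}) that the paper leaves implicit. Your care with the index-$0$ term $F^{\textbf y^{\delta}}_0=1$ being absorbed into the cluster monomials is exactly the right bookkeeping.
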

	\end{subsection}

	\begin{subsection}{Quiver of type $\Aaffine_{2,1}$}
	      We now consider the quiver of type $\Aaffine_{2,1}$ from Example \ref{exmp:A21}~:
	      $$\xymatrix{
			      && 2\ar[rd]\\
		      Q: & 1 \ar[rr]\ar[ru] && 3.
	      }$$
	      We denote by $\mathcal A(Q, \textbf x, \textbf y)$ the cluster algebra with principal coefficients at the initial seed $(Q,\textbf x, \textbf y)$ where $\textbf y=\ens{y_1,y_2,y_3}$ and $\textbf x=\ens{x_1,x_2,x_3}$. We denote by $\mathcal A(Q,\textbf x)$ the coefficient-free cluster algebra with initial seed $(Q,\textbf x)$. Keeping notations of Example \ref{exmp:A21}, in the coefficient-free settings we have~:
	      \begin{theorem}[\cite{Dupont:BaseAaffine}]\label{theorem:basesemicanA21}
		      The set
		      $$\mathcal C(Q)=\ens{\textrm{cluster monomials}} \sqcup \ens{S_n(u)z^k,S_n(u)w^k | n \geq 1,k \geq 0}$$
		      is a $\Z$-basis in $\mathcal A(Q,\textbf x)$.
	      \end{theorem}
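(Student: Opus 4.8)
The plan is to establish the two defining properties of a $\Z$-basis separately: that $\mathcal C(Q)$ spans $\mathcal A(Q,\textbf x)$ over $\Z$, and that it is $\Z$-linearly independent. The guiding observation is that \emph{every} element of $\mathcal C(Q)$ is itself a cluster character. Cluster monomials are the characters $X^Q_T$ of rigid objects of $\mathcal C_Q$; on the exceptional tube $\mathcal T_0$ the regular rigid objects are exactly the $R_0^{\oplus a}$ and $R_1^{\oplus b}$, since $\Ext^1(R_0,R_1)\simeq D\Hom(R_1,\tau R_0)\neq 0$ forbids mixing $R_0$ and $R_1$, and their characters are $w^a$ and $z^b$. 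On the other hand all homogeneous tubes produce the single quasi-simple character $u$, and by the rank $1$ case of \cite{Dupont:stabletubes} one has $X^Q_{M_\lambda^{(n)}}=S_n(u)$; hence $S_n(u)w^k=X^Q_{M_\lambda^{(n)}\oplus R_0^{\oplus k}}$ and $S_n(u)z^k=X^Q_{M_\lambda^{(n)}\oplus R_1^{\oplus k}}$. Thus $\mathcal C(Q)=\ens{X^Q_N\mid N\in\mathcal S}$ for an explicit family $\mathcal S$ of objects, whose non-cluster-monomial members are precisely the (non-rigid) objects carrying a homogeneous factor $M_\lambda^{(n)}$.

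For linear independence I would argue through the pointedness of cluster characters: each $X^Q_M$ has a single dominant Laurent monomial in the initial cluster $\textbf x$, coming from $\textbf e=0$ in (\ref{eq:XMy}) with coefficient $1$ and denominator vector $\ddim M$, and this datum is additive on the direct sums occurring in $\mathcal S$. Along the transjective directions this separates all members exactly as in the filtration argument of Lemma \ref{lem:simplealgind}, using $\textrm{gr}(X^Q_M)=\textrm{gr}\prod_i x_i^{-\<S_i,M\>}$ and additivity $\<S_i,M\oplus N\>=\<S_i,M\>+\<S_i,N\>$. The subtle point is the imaginary direction: the symmetrized Euler form of an affine quiver is degenerate with kernel $\Z\delta$, so this grading cannot distinguish elements differing by a multiple of $\delta$. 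I would therefore supplement it by a leading-term argument in the regular part, ordering $\ens{S_n(u)w^k,S_n(u)z^k}$ by their $\delta$-degree $n$ and by the exponent $k$ and checking the resulting transition matrix against the pointed Laurent monomials is unitriangular, as in the rank-two analysis of \cite{shermanz}. Distinctness of the dominant monomials then yields $\Z$-linear independence.

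The substantial part is spanning. Since cluster variables generate $\mathcal A(Q,\textbf x)$ and lie in $\mathcal C(Q)$, it suffices to show that the $\Z$-span $V$ of $\mathcal C(Q)$ is closed under multiplication, i.e.\ that each product $X^Q_N X^Q_{N'}=X^Q_{N\oplus N'}$ lies in $V$. I would reduce $N\oplus N'$ by rewriting steps: (i) if $N\oplus N'$ is rigid it is a cluster monomial, hence in $\mathcal C(Q)$; (ii) a product $S_n(u)S_m(u)$ collapses by the three-term Chebyshev recursion to a $\Z$-combination of the $S_j(u)\in\mathcal C(Q)$; (iii) a mixed factor $w\,z=X^Q_{R_0}X^Q_{R_1}$ is rewritten, via the specialization at $\textbf y=1$ of Proposition \ref{prop:multps} applied to the almost split sequence $0\to R_0\to R_0^{(2)}\to R_1\to 0$, as $X^Q_{R_0^{(2)}}+1$; since $\ddim R_0^{(2)}=\delta$, comparing regular characters of dimension vector $\delta$ gives an \emph{imaginary exchange relation} expressing $X^Q_{R_0^{(2)}}$ as a fixed polynomial in $u$, so every mixed monomial $w^az^b$ is pushed into $\Z[u,w]+\Z[u,z]$; (iv) the remaining products $u\cdot w^k$, $u\cdot z^k$ and $S_n(u)w^k\cdot w$ are already of basis form, while the Chebyshev recursion rewrites any power $u^m$ as a combination of the $S_j(u)$.

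The main obstacle, where genuine work is required, is closing $V$ under multiplication by the transjective (preprojective and preinjective) cluster variables, that is, showing that products such as $S_n(u)\cdot X^Q_P$ with $P$ transjective again lie in $V$. For these I would apply the Caldero--Keller multiplication formula \cite{CK2} to the short exact sequences in $\mathcal C_Q$ that link the homogeneous tubes to the transjective component, and run an induction on the quasi-length $n$ together with the dimension of the transjective factor; the almost split sequence (\ref{eq:suiteARinT}) controls the inductive step in exactly the way it does in the proof of Theorem \ref{theorem:polycomposantesreg}, letting one trade an increase in $n$ for previously treated products. Once every generator multiplication is shown to preserve $V$, the span $V$ is a subalgebra containing all cluster variables, so $V=\mathcal A(Q,\textbf x)$; combined with the independence established above, this proves that $\mathcal C(Q)$ is a $\Z$-basis.
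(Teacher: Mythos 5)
First, a framing point: the paper does not prove this theorem at all. It is imported from \cite{Dupont:BaseAaffine}, and the only in-paper justification is the remark immediately following the statement: the basis actually constructed in \cite{Dupont:BaseAaffine} is $\mathcal S(Q)=\ens{\textrm{cluster monomials}}\sqcup\ens{u^nz^k,u^nw^k\mid n\geq 1,\ k\geq 0}$, and since $S_n(u)$ is a monic polynomial of degree $n$ in $u$, the transition matrix from $\ens{u^nz^k,u^nw^k}$ to $\ens{S_n(u)z^k,S_n(u)w^k}$ is unitriangular over $\Z$, so $\mathcal C(Q)$ is a $\Z$-basis as well. Your proposal instead attempts to reprove the underlying basis theorem from scratch. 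The skeleton (realize every element of $\mathcal C(Q)$ as a cluster character, prove independence by leading terms, prove spanning by closing the span under multiplication by cluster variables) is indeed the standard strategy of \cite{Dupont:BaseAaffine, DXX:basesv3}, and several of your local steps are correct (e.g.\ $wz=X^Q_{R_0^{(2)}}+1=u+2$, which can be checked from Example \ref{exmp:A21} at $\textbf y=1$). But as written the argument has two genuine gaps.

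The gap in spanning is the serious one. Your reduction to ``every generator multiplication preserves $V$'' requires decomposing, inside $V$, the products $X^Q_MX^Q_N$ for rigid indecomposables $M,N$ from different clusters with $\dim\Ext^1_{\CC_Q}(M,N)\geq 2$, and the products $S_n(u)\cdot X^Q_P$ with $P$ transjective. Neither is covered by your steps (i)--(iv): the one-dimensional Caldero--Keller formula of \cite{CK2} does not apply when the extension space has dimension $\geq 2$, and the almost split sequence (\ref{eq:suiteARinT}) only moves you within a tube, so it cannot control $S_n(u)\cdot X^Q_P$ the way it controls Theorem \ref{theorem:polycomposantesreg}. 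You explicitly flag this as ``the main obstacle'' and then describe a plan rather than an argument; in \cite{Dupont:BaseAaffine} and \cite{DXX:basesv3} exactly these products are handled by a separate mechanism (generic variables and degenerations), which is the actual content of the theorem. The independence half also stops short: you correctly note that the Euler-form grading of Lemma \ref{lem:simplealgind} is blind along $\Z\delta$, but the promised ``unitriangular transition matrix'' in the imaginary direction is never exhibited. What is needed, and provable, is that the denominator vector of $X^Q_M$ in $\textbf x$ equals $\ddim M$, combined with the observation that the indexing objects have pairwise distinct dimension vectors ($n\delta+k\,\ddim R_0$ and $n\delta+k\,\ddim R_1$ are all distinct and distinct from dimension vectors of rigid objects); note that the $\textbf e=0$ term of (\ref{eq:XMy}) you invoke is an extreme monomial, not the denominator, so ``pointedness'' as you state it does not by itself separate the elements. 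In short: correct strategy, but the two steps where the theorem actually lives are left unproved, whereas the paper sidesteps all of this by citing \cite{Dupont:BaseAaffine} and converting $u^n$ to $S_n(u)$ by monicity.
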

	      
	      \begin{rmq}
		      Actually, the basis defined in \cite{Dupont:BaseAaffine} is given by 
		      $$\mathcal S(Q)=\ens{\textrm{cluster monomials}} \sqcup \ens{u^nz^k,u^nw^k | n \geq 1,k \geq 0}.$$
		      Since for every $n \geq 1$, $S_n(u)$ is a monic polynomial in $u$ of degree $n$, Theorem \ref{theorem:basesemicanA21} follows directly from the above statement. 
	      \end{rmq}

	      With coefficients, Cerulli proved the following theorem~:
	      \begin{theorem}[\cite{Cerulli:A21}]
		      The set
		      $$\mathcal B^{\textbf y}(Q)=\ens{\textrm{cluster monomials}} \sqcup \ens{F^{\textbf y^{\delta}}_n(u(\textbf y))w(\textbf y)^k, F^{\textbf y^{\delta}}_n(u(\textbf y))z(\textbf y)^k | n \geq 1, k \geq 0}$$
		      is a canonically positive $\Z\P$-basis in $\mathcal A(Q, \textbf x, \textbf y)$.
	      \end{theorem}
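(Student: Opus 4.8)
The plan is to deduce the statement from Cerulli's theorem in \cite{Cerulli:A21} by showing that the family $\mathcal B^{\textbf y}(Q)$ written here in terms of quantized Chebyshev polynomials agrees, up to normalization by elements of $\P$, with the canonically positive basis Cerulli constructs; since canonically positive bases are unique up to such normalization, this suffices. The genuinely new content is the reformulation of the homogeneous part of the basis via the polynomials $F_n^{\textbf y^\delta}$, and this is where I would concentrate the work.

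First I would fix the dictionary recorded in Example \ref{exmp:A21}: the elements $w(\textbf y)$ and $z(\textbf y)$ are the cluster characters $\XQy_{R_0}$ and $\XQy_{R_1}$ of the two quasi-simple modules in the exceptional tube $\mathcal T_0$, and $u(\textbf y)$ is the character $\XQy_{M_\lambda}$ of the quasi-simple module in any homogeneous tube. Because every homogeneous tube has rank $p=1$ and its quasi-simple module has dimension vector equal to the minimal imaginary root $\delta$, Theorem \ref{theorem:polycomposantesreg} gives immediately
$$
\XQy_{M_\lambda^{(n)}}=P_{n,1}(\textbf y^\delta, u(\textbf y))=S_n^{\textbf y^\delta}(u(\textbf y)),
$$
so the second-kind quantized polynomials evaluated at $u(\textbf y)$ are honest cluster characters of indecomposable regular modules.

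The second step is to pass from second kind to first kind, which is what lets me match the positivity-adapted basis. By Corollary \ref{corol:firstassecond} we have $F_n^{\textbf y^\delta}(u(\textbf y))=S_n^{\textbf y^\delta}(u(\textbf y))-\textbf y^\delta S_{n-2}^{\textbf y^\delta}(u(\textbf y))$, which exhibits each $F_n^{\textbf y^\delta}(u(\textbf y))$ as an explicit $\Z[\textbf y]$-combination of characters of homogeneous-tube modules, hence as a genuine element of $\mathcal A(Q,\textbf x, \textbf y)$. Multiplying by the powers $w(\textbf y)^k$ and $z(\textbf y)^k$ produces exactly the two families of imaginary elements appearing in $\mathcal B^{\textbf y}(Q)$, the two rays corresponding to the two quasi-simples of the exceptional tube. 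To finish the identification I would check that these elements satisfy the same first-kind three-term recurrence (Definition \ref{defi:firstkind}, with $q=\textbf y^\delta$) and the same initial data as the imaginary elements in Cerulli's basis; since a recurrence together with its first two terms determines the whole family, the two descriptions coincide up to the allowed $\P$-normalization.

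The main obstacle is conceptual rather than computational: it is the fact that canonical positivity forces the \emph{first}-kind polynomials $F_n^{\textbf y^\delta}$ rather than the second-kind $S_n^{\textbf y^\delta}$ (which are the ones arising directly as characters) or plain powers of $u(\textbf y)$. I would not reprove this positivity from scratch, precisely because \cite{Cerulli:A21} already establishes that the corresponding basis is canonically positive; the uniqueness of canonically positive bases up to normalization by $\P$ then transfers this property to $\mathcal B^{\textbf y}(Q)$ once the identification above is in place. Everything else reduces to Theorem \ref{theorem:polycomposantesreg} and Corollary \ref{corol:firstassecond}.
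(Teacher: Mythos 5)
The paper offers no proof of this statement at all: it is imported verbatim as Cerulli's theorem from \cite{Cerulli:A21}, so your proposal does not conflict with anything in the text. Your plan --- identify $S_n^{\textbf y^\delta}(u(\textbf y))$ with the characters $\XQy_{M_\lambda^{(n)}}$ via Theorem \ref{theorem:polycomposantesreg}, pass to the first kind by Corollary \ref{corol:firstassecond}, and transfer canonical positivity from Cerulli's basis by the uniqueness of such bases up to normalization by $\P$ --- is sound and mirrors exactly the strategy the author carries out explicitly in the Kronecker case (where the matching of $\psi(F_n(u))$ with $\textbf y^{-n\delta/2}F_n^{\textbf y^\delta}(u(\textbf y))$ is done by induction on the three-term recurrence); the only caveat is that the final matching step against Cerulli's own normalization is announced rather than executed, but that is a routine verification of initial data.
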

	      
	      It then follows directly from Corollary \ref{corol:secondasfirst} that~:
	      \begin{corol}
		      The set
		      $$\mathcal C^{\textbf y}(Q)=\ens{\textrm{cluster monomials}} \sqcup \ens{S^{\textbf y^\delta}_n(u(\textbf y))z(\textbf y)^k,S^{\textbf y^\delta}_n(u(\textbf y))w(\textbf y)^k | n \geq 1,k \geq 0}$$
		      is a $\Z\P$-basis in $\mathcal A(Q,\textbf x,\textbf y)$.
	      \end{corol}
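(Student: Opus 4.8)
The plan is to deduce the corollary from the preceding theorem of Cerulli \cite{Cerulli:A21} by means of a unitriangular change of basis, exactly as the remark following Theorem \ref{theorem:basesemicanA21} deduces the semicanonical basis from the monomial basis $\mathcal S(Q)$. By that theorem,
$$\mathcal B^{\textbf y}(Q)=\ens{\textrm{cluster monomials}} \sqcup \ens{F^{\textbf y^{\delta}}_n(u(\textbf y))w(\textbf y)^k, F^{\textbf y^{\delta}}_n(u(\textbf y))z(\textbf y)^k | n \geq 1, k \geq 0}$$
is a $\Z\P$-basis of $\mathcal A(Q,\textbf x,\textbf y)$. The two families $\mathcal B^{\textbf y}(Q)$ and $\mathcal C^{\textbf y}(Q)$ contain the same cluster monomials and differ only in that the elements built from $F^{\textbf y^\delta}_n$ are replaced by those built from $S^{\textbf y^\delta}_n$, so it suffices to show that the $\Z\P$-linear map sending each element of $\mathcal B^{\textbf y}(Q)$ to the corresponding element of $\mathcal C^{\textbf y}(Q)$ is invertible over $\Z\P$.

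The key input is Corollary \ref{corol:secondasfirst}, applied with $q=\textbf y^\delta$ and $x=u(\textbf y)$, which yields
$$S^{\textbf y^\delta}_n(u(\textbf y))=\sum_{j=0}^{\lfloor n/2\rfloor}(\textbf y^\delta)^j\,F^{\textbf y^\delta}_{n-2j}(u(\textbf y)),$$
with the conventions $F^{\textbf y^\delta}_0=1$ and $F^{\textbf y^\delta}_i=0$ for $i<0$. Multiplying by $w(\textbf y)^k$ (resp. $z(\textbf y)^k$) expresses each regular element $S^{\textbf y^\delta}_n(u(\textbf y))w(\textbf y)^k$ of $\mathcal C^{\textbf y}(Q)$ as a $\Z\P$-combination of elements of $\mathcal B^{\textbf y}(Q)$: the term $j=0$ gives the corresponding element $F^{\textbf y^\delta}_n(u(\textbf y))w(\textbf y)^k$ with coefficient $1$, the terms with $1\leq j$ and $n-2j\geq 1$ give lower regular elements $F^{\textbf y^\delta}_{n-2j}(u(\textbf y))w(\textbf y)^k$, and when $n$ is even the term $j=n/2$ produces $(\textbf y^\delta)^{n/2}w(\textbf y)^k$. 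Since $w(\textbf y)$ and $z(\textbf y)$ are the characters of the rigid quasi-simple modules $R_0$, $R_1$ of the exceptional tube, they are cluster variables, so $w(\textbf y)^k$ and $z(\textbf y)^k$ are cluster monomials and this last term again lies in the $\Z\P$-span of $\mathcal B^{\textbf y}(Q)$.

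I would then order the elements by placing all cluster monomials first and the regular elements afterwards by increasing quasi-length $n$ along each ray $w(\textbf y)^k$ and $z(\textbf y)^k$. With respect to this ordering the matrix expressing $\mathcal C^{\textbf y}(Q)$ in terms of $\mathcal B^{\textbf y}(Q)$ is block upper triangular, of the schematic form
$$\begin{pmatrix} I & M \\ 0 & U\end{pmatrix},$$
where the cluster-monomial diagonal block $I$ is the identity, the regular diagonal block $U$ is unitriangular by the displayed expansion (diagonal entries $1$, off-diagonal entries in $\Z_{\geq 0}[\textbf y^\delta]$ linking $S^{\textbf y^\delta}_n$ to the strictly lower $F^{\textbf y^\delta}_{n-2j}$), and $M$ records only the spillover of the even-$n$ terms into the cluster-monomial block. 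Its determinant is therefore $1$, so the map is invertible over $\Z\P$ with locally finite inverse, and $\mathcal C^{\textbf y}(Q)$ spans the same $\Z\P$-module as $\mathcal B^{\textbf y}(Q)$ and is likewise a $\Z\P$-basis. The only genuinely delicate point is the bookkeeping that keeps the change of basis triangular once the even-$n$ spillover into the cluster monomials is accounted for; everything else is a formal consequence of the unitriangularity furnished by Corollary \ref{corol:secondasfirst}.
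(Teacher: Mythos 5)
Your proposal is correct and follows exactly the route the paper intends: the paper derives this corollary "directly from Corollary \ref{corol:secondasfirst}", i.e.\ from the unitriangular expansion $S^{\textbf y^\delta}_n=\sum_j(\textbf y^\delta)^jF^{\textbf y^\delta}_{n-2j}$ applied to Cerulli's canonically positive basis $\mathcal B^{\textbf y}(Q)$, and you have simply made explicit the triangular change-of-basis bookkeeping (including the even-$n$ spillover into cluster monomials via $w(\textbf y)^k$, $z(\textbf y)^k$) that the paper leaves implicit.
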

	\end{subsection}
	      
	\begin{subsection}{Conjectures for the general affine case}
	      Let $Q$ be an affine quiver. Let $\lambda$ be the parameter of an homogeneous tube in $\Gamma(kQ)$, we know that $\XQy_{M_\lambda}$ does not depend on the chosen parameter $\lambda$ (see e.g. \cite{DXX:basesv3}) and we denote by $z=\XQy_{M_\lambda}$ this common value. We denote by $\mathcal E_{\mathcal R}$ the set of rigid regular modules in $kQ$-mod. Generalizing results of \cite{Dupont:BaseAaffine} for the coefficient-free case, we conjecture that a $\Z\P$-basis in the cluster algebra $\mathcal A(Q,\textbf x, \textbf y)$ with principal coefficients at the initial seed $(Q,\textbf x, \textbf y)$ can be described as follows~:
	      \begin{conj}\label{conj:semican}
		      Let $Q$ be an affine quiver, then 
		      $$\mathcal C^{\textbf y}(Q)=\ens{\textrm{cluster monomials}} \sqcup \ens{S_n^{\textbf y^\delta}(z)\XQy_R | n \geq 1, R \in \mathcal E_{\mathcal R}}$$
		      is a $\Z\P$-basis in $\mathcal A(Q,\textbf x, \textbf y)$.
	      \end{conj}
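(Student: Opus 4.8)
The plan is to prove that $\mathcal C^{\textbf y}(Q)$ is simultaneously a spanning set and $\Z\P$-linearly independent, lifting the coefficient-free statement of \cite{Dupont:BaseAaffine} (of which Theorem \ref{theorem:basesemicanA21} is the $\Aaffine_{2,1}$ instance) by deforming each step with coefficients. The key preliminary observation is that, by Theorem \ref{theorem:polycomposantesreg} applied to a homogeneous tube (the case $p=1$), one has $S_n^{\textbf y^\delta}(z)=\XQy_{M_\lambda^{(n)}}$, so that every imaginary element $S_n^{\textbf y^\delta}(z)\XQy_R$ equals $\XQy_{M_\lambda^{(n)}\oplus R}$. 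Thus $\mathcal C^{\textbf y}(Q)$ is exactly the set of characters $\XQy_T$ with $T$ either rigid (giving the cluster monomials) or of the form $M_\lambda^{(n)}\oplus R$ with $R$ rigid regular; this recasts the statement as a \emph{generic}-type basis result and isolates the combinatorial data indexing the purported basis.

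For the spanning property I would use that $\mathcal A(Q,\textbf x,\textbf y)$ is generated as a $\Z\P$-algebra by the cluster variables $\XQy_M$ ($M$ indecomposable rigid, together with shifts of projectives), and rewrite an arbitrary product of such characters as a $\Z\P$-combination of elements of $\mathcal C^{\textbf y}(Q)$. Using the preprojective/regular/preinjective trichotomy of $\ind(kQ)$, the only obstruction to a product being a cluster monomial comes from non-rigid regular summands; there one applies the multiplication formula of Proposition \ref{prop:multps} together with the three-term recurrence of the corollary following Theorem \ref{theorem:polycomposantesreg} to straighten any product of tube characters into the normal form $S_n^{\textbf y^\delta}(z)\XQy_R$, the scalar $1$ of the coefficient-free recurrences being systematically replaced by the monomials $\textbf y^\delta$ and $\textbf y^{\ddim R}$. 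The spanning statement then follows from its coefficient-free counterpart by tracking these coefficient monomials through the same straightening process.

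The genuinely hard part is the $\Z\P$-linear independence. The cluster monomials are independent because they have pairwise distinct $g$-vectors, hence distinct leading $\textbf x$-monomials, so I would set up a filtration on $\Z[\textbf y][\textbf x^{\pm 1}]$ by $\textbf x$-degree exactly as in the proof of Lemma \ref{lem:simplealgind} and separate the basis elements by their leading terms in the associated graded. The delicate point is that the imaginary elements $S_n^{\textbf y^\delta}(z)\XQy_R$ involve the imaginary root $\delta$, which lies in the radical of the Euler form, so their leading $\textbf x$-monomials do not by themselves record the quasi-length $n$ and may collide with those of cluster monomials; one must refine the grading (for instance by a $\delta$-degree detecting $n$, together with the datum of $R$ detected through $\langle S_i,R\rangle$) and check that the coefficient monomials $\textbf y^\delta$ produce no cancellations in the graded pieces.

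The main obstacle, and the reason this remains a conjecture, is precisely this leading-term control in the general affine case. In the rank-two Kronecker situation the lift was effected by transporting the coefficient-free basis along the Sherman--Zelevinsky normalizing isomorphism of \cite{shermanz}; no such isomorphism between $\mathcal A(Q,\textbf x)\otimes\Z\P$ and $\mathcal A(Q,\textbf x,\textbf y)$ is available once $Q$ has more than two vertices, so independence cannot simply be transported. A promising route around this is to show that the associated graded of $\mathcal C^{\textbf y}(Q)$, or its specialization at $\textbf y^\delta=1$ guided by Lemma \ref{lem:quantizeddeform}, degenerates exactly to the coefficient-free basis of \cite{Dupont:BaseAaffine} with its leading-term combinatorics preserved; alternatively one might adapt the constructions of bases with coefficients in \cite{DXX:basesv3}. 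Making one of these degenerations exact is the crux of a complete proof.
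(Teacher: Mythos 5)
This statement is left as a conjecture in the paper: no proof is given, only the heuristic paragraph that follows it in Section \ref{section:bases}, so there is no argument of the author's against which to measure yours. Your proposal is honestly calibrated to that fact --- it is a programme rather than a proof, and you say so explicitly. The parts that can be checked are correct and coincide with the route the paper itself envisages: by Theorem \ref{theorem:polycomposantesreg} applied to a homogeneous tube ($p=1$, $\ddim M_\lambda=\delta$) together with the definition $S_n^q=P_{n,1}$, one indeed has $S_n^{\textbf y^{\delta}}(z)=\XQy_{M_\lambda^{(n)}}$, hence $S_n^{\textbf y^{\delta}}(z)\XQy_R=\XQy_{M_\lambda^{(n)}\oplus R}$ by multiplicativity of the character; and straightening products of tube characters via Proposition \ref{prop:multps} and the three-term recurrence is exactly the mechanism the paper points to (via \cite{CK1,Dupont:BaseAaffine,mathese,DXX:basesv3}) for the spanning half.

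The genuine gap is the one you name, namely $\Z\P$-linear independence, but let me sharpen two points. First, your plan to ``set up a filtration exactly as in the proof of Lemma \ref{lem:simplealgind}'' fails at the door for affine quivers: that lemma requires the incidence matrix $B$ to be of full rank in order to produce the linear form $\epsilon$ with $\epsilon(B\alpha_i)<0$, and a skew-symmetric matrix of odd size (e.g.\ for $\Aaffine_{2,1}$) is always singular, so the filtration must be replaced, not copied. Second, the leading $\textbf x$-monomial of $S_n^{\textbf y^{\delta}}(z)\XQy_R$ is governed by the denominator vector $n\delta+\ddim R$ and therefore \emph{does} record $n$; the actual difficulty is not that $n$ is invisible but that distinct elements sharing a graded piece, together with the $\textbf y$-monomials introduced by the quantized recurrences, could a priori cancel, and that the Sherman--Zelevinsky normalizing isomorphism which transports independence from the coefficient-free basis exists only in rank two. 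This is precisely why the paper can settle the Kronecker and $\Aaffine_{2,1}$ cases (leaning on \cite{shermanz} and \cite{Cerulli:A21}) but must leave the general affine case open. In short: no error of substance, but also no proof --- which is the correct state of affairs for this statement.
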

	      
	      According to the previous examples, we also give a conjecture for a canonically positive basis in an affine cluster algebra~:
	      \begin{conj}\label{conj:can}
		      Let $Q$ be an affine quiver, then 
		      $$\mathcal B^{\textbf y}(Q)=\ens{\textrm{cluster monomials}} \sqcup \ens{F_n^{\textbf y^\delta}(z)\XQy_R | n \geq 1, R \in \mathcal E_{\mathcal R}}$$
		      is a canonically positive $\Z\P$-basis in $\mathcal A(Q,\textbf x, \textbf y)$.
	      \end{conj}
	      
	      In the above examples, the general idea is thus that quantized Chebyshev polynomials of the second kind are related to representation theoretic properties whereas quantized Chebyshev polynomials of the first kind are related to positivity properties. More precisely, if $\mathcal A(Q,\textbf y, \textbf x)$ is a cluster algebra associated to an affine quiver $Q$, quantized Chebyshev polynomials of the second kind arise naturally from the study of $\mathcal A(Q,\textbf y, \textbf x)$ through the representation theory of $Q$. Using methods proposed in \cite{CK1,Dupont:BaseAaffine, mathese, DXX:basesv3}, it thus seems likely to prove that $\mathcal C^{\textbf y}(Q)$ is a $\Z\P$-basis in $\mathcal A(Q,\textbf x, \textbf y)$. Then, using Corollary \ref{corol:secondasfirst}, one can express quantized Chebyshev polynomials of the second kind as positive $\Z\P$-linear combinations of quantized Chebyshev polynomials of the first kind. Thus, if Conjecture \ref{conj:semican} is proved, it is straightforward to prove that $\mathcal B^{\textbf y}(Q)$ is a $\Z\P$-basis. The remaining part would thus be to prove that $\mathcal B^{\textbf y}(Q)$ is a canonically positive $\Z\P$-basis. 
	\end{subsection}
\end{section}

\section*{Acknowledgements}
The author would like to thank Giovanni Cerulli Irelli for stimulating discussions about affine cluster algebras of type $\Aaffine_{2,1}$. The results presented in \cite{Cerulli:A21} provided the motivation for introducing quantized Chebyshev polynomials. He would also like to thank Philippe Caldero for interesting remarks and ideas about this subject. Finally, he would like to thank Shih-Wei Yang for pointing out the connections of this work with \cite{YZ:ppalminors}.


\newcommand{\etalchar}[1]{$^{#1}$}

\end{document}